\numberwithin{equation}{section}
\theoremstyle{plain}
\newtheorem{exam}{Example}
\newtheorem{thm}{Theorem}[section]
\newtheorem{cor}[thm]{Corollary}
\newtheorem{lem}[thm]{Lemma}
\newtheorem{prop}[thm]{Proposition}
\theoremstyle{definition}
\newtheorem{defn}[thm]{Definition}
\newtheorem{rem}[thm]{Remark}
\numberwithin{equation}{section}
\def\A{{\mathcal A}}
\def\beq{\begin{eqnarray}}
	\def\eeq{\end{eqnarray}}
\def\beqa{\begin{eqnarray*}}
	\def\eeqa{\end{eqnarray*}}
\def\Ran{\operatorname{Ran}}
\def\beqn{\begin{equation}}
	\def\eeqn{\end{equation}}
\def\mg#1{}
\def\Ran{\operatorname{Ran}}
\renewcommand{\epsilon}{\varepsilon}
\renewcommand{\phi}{\varphi}
\renewcommand{\bf}[1]{\textbf{#1}}
\renewcommand{\it}[1]{\textit{#1}}
\renewcommand{\sc}[1]{\textsc{#1}}
\renewcommand{\sf}[1]{\textsf{#1}}
\numberwithin{equation}{section}
\setlist[enumerate]{font=\upshape,noitemsep, topsep=0pt} 
\setlist[itemize]{noitemsep, topsep=0pt}
\begin{document}
	
	\title[Admissible Fundamental Operators and Models for $\Gamma_{E(3; 3; 1, 1, 1)}$ and $\Gamma_{E(3; 2; 1, 2)}$-contraction]{Admissible Fundamental Operators and Models for $\Gamma_{E(3; 3; 1, 1, 1)}$-contraction and $\Gamma_{E(3; 2; 1, 2)}$-contraction}
	\author{Avijit Pal ~ \hspace{0.1cm} and ~ \hspace{0.1cm} Bhaskar Paul}
	\address[A. Pal]{Department of Mathematics, IIT Bhilai, 6th Lane Road, Jevra, Chhattisgarh 491002}
\email{A. Pal:avijit@iitbhilai.ac.in}

\address[B. Paul]{Department of Mathematics, IIT Bhilai, 6th Lane Road, Jevra, Chhattisgarh 491002}
\email{B. Paul:bhaskarpaul@iitbhilai.ac.in }

\subjclass[2010]{47A15, 47A20, 47A25, 47A45.}

\keywords{$\Gamma_{E(3; 3; 1, 1, 1)} $-contraction, $\Gamma_{E(3; 2; 1, 2)} $-contraction, Spectral set, Complete spectral set,  $\Gamma_{E(3; 3; 1, 1, 1)}$-unitary, $\Gamma_{E(3; 2; 1, 2)}$-unitary, $\Gamma_{E(3; 3; 1, 1, 1)}$-isometry, $\Gamma_{E(3; 2; 1, 2)}$-isometry, Douglas model, Sz. Nagy-Foias type model}

	\maketitle
	
	\begin{abstract}
		A $7$-tuple of commuting bounded operators $\textbf{T} = (T_1, \dots, T_7)$ defined on a Hilbert space $\mathcal{H}$ is called a \textit{$\Gamma_{E(3; 3; 1, 1, 1)}$-contraction} if $\Gamma_{E(3; 3; 1, 1, 1)}$ is a spectral set for $\textbf{T}$. Let $(S_1, S_2, S_3)$ and $(\tilde{S}_1, \tilde{S}_2)$ represents  tuples of commuting bounded operators on a Hilbert space $\mathcal{H}$ with $S_i\tilde{S}_j = \tilde{S}_jS_i$ for $1 \leqslant i \leqslant 3$ and $1 \leqslant j \leqslant 2$. The tuple $\textbf{S} = (S_1, S_2, S_3, \tilde{S}_1, \tilde{S}_2)$ is said to be  \textit{$\Gamma_{E(3; 2; 1, 2)}$-contraction} if $ \Gamma_{E(3; 2; 1, 2)}$ is a spectral set for $\textbf{S}$.	
		
		In this paper, we show that for a given pure contraction $T_7$ acting on a Hilbert space $\mathcal{H}$, if $(\tilde{F}_1, \dots, \tilde{F}_6) \in \mathcal{B}(\mathcal{D}_{T^*_7})$ with $[\tilde{F}_i, \tilde{F}_j] = 0, [\tilde{F}^*_i, \tilde{F}_{7-j}] = [\tilde{F}^*_j, \tilde{F}_{7-i}]$,$w(\tilde{F}^*_i + \tilde{F}_{7-i}z) \leqslant 1$ and these operators satisfy
		\[(\tilde{F}^*_i + \tilde{F}_{7-i}z)\Theta_{T_7}(z) = \Theta_{T_7}(z)(F_i + F^*_{7-i}z) \,\, \text{for all} \,\, z \in \mathbb{D}\] for $1 \leqslant i, j \leqslant 6$ for some $(F_1, \dots, F_6) \in \mathcal{B}(\mathcal{D}_{T_7})$ with $w(F^*_i + F_{7-i}z) \leqslant 1$ for $1 \leqslant i \leqslant 6$, then there exists a $\Gamma_{E(3; 3; 1, 1, 1)}$-contraction $(T_1, \dots, T_7)$ such that $F_1, \dots, F_6$ are the fundamental operators of $(T_1, \dots, T_7)$ and $\tilde{F}_1, \dots, \tilde{F}_6$ are the fundamental operators of $(T^*_1, \dots, T^*_7)$. We also prove similar type of result for pure $\Gamma_{E(3; 2; 1, 2)}$-contraction.
		
		We explicitly construct a $\Gamma_{E(3; 3; 1, 1, 1)}$-unitary (respectively, a $\Gamma_{E(3; 2; 1, 2)}$-unitary) starting from a $\Gamma_{E(3; 3; 1, 1, 1)}$-contraction (respectively, a $\Gamma_{E(3; 2; 1, 2)}$-contraction).
Further, we develop functional models for general $\Gamma_{E(3; 3; 1, 1, 1)}$-isometries (respectively, $\Gamma_{E(3; 2; 1, 2)}$-isometries). In particular, we construct Douglas-type and Sz.-Nazy-Foias-type models for $\Gamma_{E(3; 3; 1, 1, 1)}$-contractions (respectively, $\Gamma_{E(3; 2; 1, 2)}$-contractions). Finally, we present a Schaffer-type model for the $\Gamma_{E(3; 3; 1, 1, 1)}$-isometric dilation (respectively, the $\Gamma_{E(3; 2; 1, 2)}$-isometric dilation).
	\end{abstract}
	
	\section{Introduction and Motivation}\label{Intro}
	
	Let $\Omega$ be a compact subset of $\mathbb{C}^m$, and let $\mathcal{A}(\Omega)$ denote the algebra of holomorphic functions defined on an open set $U$ containing $\Omega$. Consider an $m$-tuple of commuting bounded operators $\mathbf{T} = (T_1, \ldots, T_m)$ acting on a Hilbert space $\mathcal{H}$, and let $\sigma(\mathbf{T})$ denote its joint spectrum.
We define a homomorphism $\rho_{\mathbf T}:\mathcal A(\Omega)\rightarrow\mathcal B(\mathcal H)$ in a following manner: $$1\to I~{\rm{and}}~z_i\to T_i~{\rm{for}}~1\leq i\leq m. $$ It is evident that $\rho_{\mathbf T}$ is a homomorphism. A compact set $\Omega \subset \mathbb{C}^m$ is called a spectral set for $\mathbf{T}$ if $\sigma(\mathbf{T}) \subseteq \Omega$ and the homomorphism $\rho_{\mathbf{T}}$ is contractive. Von Neumann introduced this notion in the one-variable case. In particular, his classical theorem asserts that the closed unit disc is a spectral set for every contraction on a Hilbert space $\mathcal{H}$.
	\begin{thm}[Chapter 1, Corollary 1.2, \cite{paulsen}]
		Let $T\in \mathcal B(\mathcal H)$ be a contraction. Then
		$$\|p(T)\|\leq \|p\|_{\infty, \bar{\mathbb D}}:=\sup\{|p(z)|: |z|\leq1\} $$ for every polynomial $p.$
	\end{thm}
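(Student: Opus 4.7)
The plan is to reduce the inequality to the unitary case by dilating $T$ to a unitary operator. First I would invoke Sz.-Nagy's unitary dilation theorem: for any contraction $T \in \mathcal{B}(\mathcal{H})$, there exists a Hilbert space $\mathcal{K}$ containing $\mathcal{H}$ as a closed subspace and a unitary operator $U \in \mathcal{B}(\mathcal{K})$ such that
$$
T^n \;=\; P_{\mathcal{H}}\, U^n\big|_{\mathcal{H}} \qquad \text{for all } n \geq 0,
$$
where $P_{\mathcal{H}}$ denotes the orthogonal projection of $\mathcal{K}$ onto $\mathcal{H}$. By linearity this extends to $p(T) = P_{\mathcal{H}}\, p(U)|_{\mathcal{H}}$ for every polynomial $p$, and therefore $\|p(T)\| \leq \|p(U)\|$ since $P_{\mathcal{H}}$ and the inclusion $\mathcal{H} \hookrightarrow \mathcal{K}$ are both contractions.

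Next I would apply the continuous functional calculus to the unitary $U$. Since $\sigma(U) \subseteq \mathbb{T} \subseteq \bar{\mathbb{D}}$, the spectral theorem gives
$$
\|p(U)\| \;=\; \sup_{z\, \in\, \sigma(U)} |p(z)| \;\leq\; \sup_{z\, \in\, \bar{\mathbb{D}}} |p(z)| \;=\; \|p\|_{\infty,\bar{\mathbb{D}}}.
$$
Combining the two estimates immediately yields $\|p(T)\| \leq \|p\|_{\infty,\bar{\mathbb{D}}}$, which is the desired bound.

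The main obstacle, if one wants a fully self-contained argument, is the construction of the power unitary dilation itself. The usual route is in two steps: one first builds an isometric dilation $V$ on the extended space $\mathcal{H} \oplus \bigoplus_{n \geq 1} \mathcal{D}_T$ using the defect operator $D_T = (I - T^*T)^{1/2}$ so that $V$ satisfies $V^n|_{\mathcal{H}} = T^n$ after compression, and one then further dilates $V$ to a unitary via a bilateral shift on an auxiliary copy of the defect space. Since Sz.-Nagy's theorem is classical and the paper itself will later rely heavily on dilation machinery of this exact form (for $\Gamma_{E(3;3;1,1,1)}$-isometric dilations and so on), the cleanest presentation is to quote it as a black box and let the short functional-calculus argument above complete the proof.
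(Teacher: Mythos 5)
Your proposal is correct and follows exactly the route the paper itself indicates: the paper states this result as a citation to Paulsen's book and, immediately afterwards, remarks that the Sz.-Nagy/Sch\"{a}ffer unitary dilation together with the spectral theorem for unitaries yields the von Neumann inequality, which is precisely your argument of compressing $p(U)$ to $\mathcal{H}$ and bounding $\|p(U)\|$ by $\sup_{z \in \sigma(U)}|p(z)| \leq \|p\|_{\infty,\bar{\mathbb{D}}}$. No gaps; the only implicit step worth noting is that $p(U)$ is normal, so its norm equals its spectral radius, which the spectral mapping theorem then controls.
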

	The following theorem presented here is a revised version of the Sz.-Nagy dilation theorem [Theorem 1.1, \cite{paulsen}].
	\begin{thm}[Theoremn $4.3$, \cite{paulsen}] Let $T\in \mathcal B(\mathcal H)$ be a contraction. Then there exists a larger Hilbert space $\mathcal K$ that contains $\mathcal H$ as a subspace, and a unitary operator $U$ acting on a Hilbert space $\mathcal K \supseteq \mathcal H$ with the property that $\mathcal K$ is the smallest closed reducing subspace for $U$ containing $\mathcal H$ such that
		$$P_\mathcal H\,U^n_{|\mathcal H}=T^n, ~{\rm{ for ~all}} ~n\in \mathbb N\cup \{0\}.$$
	\end{thm}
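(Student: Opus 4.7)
The plan is to construct a unitary dilation via Sch\"affer's explicit matrix construction and then cut down to the smallest reducing subspace containing $\mathcal{H}$ to secure the minimality.

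First, I would set up the standard defect operators $D_T=(I-T^*T)^{1/2}$ and $D_{T^*}=(I-TT^*)^{1/2}$ together with the defect spaces $\mathcal{D}_T=\overline{D_T\mathcal{H}}$ and $\mathcal{D}_{T^*}=\overline{D_{T^*}\mathcal{H}}$, and record the intertwinings $TD_T=D_{T^*}T$ and $T^*D_{T^*}=D_TT^*$. Next, form the two-sided direct sum
\[
\mathcal{K}\;=\;\Big(\bigoplus_{n\leq -1}\mathcal{D}_{T^*}\Big)\;\oplus\;\mathcal{H}\;\oplus\;\Big(\bigoplus_{n\geq 1}\mathcal{D}_T\Big),
\]
identifying $\mathcal{H}$ with the $0$-slot. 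Define $U\in\mathcal{B}(\mathcal{K})$ by the rule that on the slots $n\leq -2$ and $n\geq 2$ it acts as the obvious unilateral shifts (left on the $\mathcal{D}_{T^*}$ side, right on the $\mathcal{D}_T$ side), while on the central block $\mathcal{D}_{T^*}\oplus\mathcal{H}\oplus\mathcal{D}_T$ (slots $-1,0,1$) it sends the $(-1)$-slot into the $(-2)$-slot, acts on $\mathcal{H}\oplus\mathcal{D}_{T^*}$ into $\mathcal{H}\oplus\mathcal{D}_T$ by the Halmos block $\bigl[\begin{smallmatrix}T & D_{T^*}\\ D_T & -T^*\end{smallmatrix}\bigr]$, and pushes the newly produced vector in $\mathcal{D}_T$ into the $1$-slot. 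Using $TD_T=D_{T^*}T$, $\|D_Th\|^2+\|Th\|^2=\|h\|^2$ and the analogous identity for $D_{T^*}$, one checks directly that the Halmos block is unitary from $\mathcal{H}\oplus\mathcal{D}_{T^*}$ onto $\mathcal{H}\oplus\mathcal{D}_T$; combined with the peripheral shifts this gives that $U$ is isometric and surjective on $\mathcal{K}$, hence unitary.

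Once $U$ is in hand, the dilation identity $P_{\mathcal{H}}U^n|_{\mathcal{H}}=T^n$ for $n\geq 0$ is verified by induction: for $h\in\mathcal{H}$, $Uh=Th\oplus D_Th$ with the first component in slot $0$ and the second in slot $1$, and each further application of $U$ replaces the $\mathcal{H}$-part by $T$ times itself, deposits a new $D_TT^{n-1}h$ in slot $1$, and shifts all previously deposited vectors one further to the right. Thus the $\mathcal{H}$-component of $U^nh$ is exactly $T^nh$, and applying $P_{\mathcal{H}}$ kills the tail in the $\mathcal{D}_T$-slots.

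Finally, let $\mathcal{K}'$ be the closed linear span of $\{U^n\mathcal{H}:n\in\mathbb{Z}\}$; by construction it is invariant under both $U$ and $U^*$, so it reduces $U$ and is the smallest reducing subspace of $\mathcal{K}$ containing $\mathcal{H}$. Restricting $U$ to $\mathcal{K}'$ yields the required minimal unitary dilation, with the dilation identity inherited. The main obstacle is Step~2: assembling the central Halmos block together with the bilateral shift structure into a single unitary on $\mathcal{K}$. The precise signs and defect factors in that block are dictated by the unitarity requirement, and the intertwining $TD_T=D_{T^*}T$ is exactly what makes the verification go through; everything else is bookkeeping.
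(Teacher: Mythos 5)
The paper offers no proof of this statement at all; it is quoted from Paulsen's book as background, so the only benchmark is the standard argument the paper alludes to in the surrounding text, namely Sch\"affer's explicit construction — and that is exactly the route you take. Your overall plan (build $U$ on a two-sided sum of defect spaces via the Halmos block $\bigl[\begin{smallmatrix}T & D_{T^*}\\ D_T & -T^*\end{smallmatrix}\bigr]$, verify $P_{\mathcal H}U^n|_{\mathcal H}=T^n$ by tracking the slot-$0$ component, then cut down to the closed span of $\{U^n\mathcal{H}:n\in\mathbb{Z}\}$ for minimality) is the right one, and your verification of the dilation identity and of the minimality of the reducing subspace are both fine.

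There is, however, one concrete error in your definition of $U$: the shift on the $\mathcal{D}_{T^*}$ side goes the wrong way. You say $U$ sends the $(-1)$-slot into the $(-2)$-slot while simultaneously feeding $\mathcal{H}\oplus\mathcal{D}_{T^*}$ (slots $0$ and $-1$) into the Halmos block; the $(-1)$-slot is then used twice, and a vector $d_{-1}\oplus h$ supported in slots $-1,0$ is mapped to a vector of squared norm $2\|d_{-1}\|^2+\|h\|^2$, so the operator you describe is not an isometry (and under the alternative reading in which the whole negative side simply shifts outward, nothing maps onto the $(-1)$-slot, so $U$ is not surjective). The correct bookkeeping is that the negative slots shift toward the center, slot $-n$ going to slot $-n+1$ for $n\geqslant 2$, while the $(-1)$-slot is consumed as the $\mathcal{D}_{T^*}$-input of the Halmos block, whose two outputs land in slots $0$ and $1$. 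With that orientation the cross terms cancel via $TD_T=D_{T^*}T$ exactly as you indicate, $U$ is isometric and onto $\mathcal{K}$, and the remainder of your argument goes through unchanged.
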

	
	Schaffer constructed the unitary dilation for a given contraction $T$. The spectral theorem for unitary operators then guarantees the von Neumann inequality through the existence of a power dilation. Let $\Omega$ be a compact subset of $\mathbb C ^m. $ Let $F=\left(\!(f_{ij})\!\right)$ be a matrix-valued polynomial defined on $\Omega.$ We define $\Omega$ as a complete spectral set (complete $\Omega$-contraction) for $\mathbf T$ if $\|F(\mathbf T) \| \leq \|F\|_{\infty, \Omega}$ for every $F\in \mathcal O(\Omega)\otimes \mathcal M_{k\times k}(\mathbb C), k\geq 1$.If a compact set $\Omega$ serves as a spectral set for a commuting $m$-tuple of operators $\mathbf{T}$, then $\Omega$ is, in fact, a complete spectral set for $\mathbf{T}$. In this case, we say that the domain $\Omega$ possesses property $P$. We say that a $m$-tuple of  commuting bounded operators $\mathbf{T}$ with $\Omega$ as a spectral set has a $\partial \Omega$ normal dilation if there is a Hilbert space $\mathcal K$ that contains $\mathcal H$ as a subspace, along with a commuting $m$-tuple of normal operators $\mathbf{N}=(N_1,\ldots,N_m)$ on $\mathcal K$ whose spectrum lies within $\partial \Omega$, and$$P_{\mathcal H}F(\mathbf N)\mid_{\mathcal H}=F(\mathbf T) ~{\rm{for~ all~}} F\in \mathcal O(\Omega).$$
	
	In 1969, Arveson \cite{A,AW} established that a commuting $m$-tuple of operators $\mathbf{T}$ has a $\partial \Omega$-normal dilation if and only if $\Omega$ is a spectral set for $\mathbf{T}$ and $\mathbf{T}$ satisfies property $P$. Later, Agler \cite{agler} proved in 1984 that the annulus possesses property $P$. However, Dritschel and McCullough \cite{michel} showed that property $P$ fails for domains with connectivity $n \ge 2$. In several complex variables, both the symmetrized bidisc and the bidisc are known to possess property $P$, as shown by Agler and Young \cite{young} and Ando \cite{paulsen}, respectively. Parrott \cite{paulsen} provided the first counterexample in the multivariable setting for the polydisc $\mathbb{D}^n$ when $n > 2$. Subsequently, G. Misra \cite{GM,sastry}, V. Paulsen \cite{vern}, and E. Ricard \cite{pisier} established that no ball in $\mathbb{C}^m$, defined with respect to any norm $|\cdot|_{\Omega}$ and for $m \ge 3$, possesses property $P$. Furthermore, \cite{cv} shows that if two matrices $B_1$ and $B_2$ are not simultaneously diagonalizable via a unitary transformation, then the set  $\Omega_{\mathbf B}:= \{(z_1,z_2) :\|z_1 B_1 + z_2 B_2 \|_{\rm op} < 1\}$ fails to have property $P$, where $\mathbf{B} = (B_1, B_2) \in \mathbb{C}^2 \otimes \mathcal{M}_2(\mathbb{C})$ and $B_1, B_2$ are linearly independent.	
	
	We recall the definition of completely non-unitary contraction from \cite{Nagy}. A contraction $T$  on a Hilbert space $\mathcal H$  is said to be  completely non-unitary (c.n.u.) contractions if there exists no nontrivial reducing subspace $\mathcal L$  for $T$ such that $T |_{\mathcal L}$ is a unitary operator. This section presents the canonical decomposition of the $\Gamma_{E(3; 3; 1, 1, 1)}$-contraction and the $\Gamma_{E(3; 2; 1, 2)}$-contraction.  Any contraction $T$ on a Hilbert space $\mathcal{H}$ can be expressed as the orthogonal direct sum of a unitary and a completely non-unitary contraction. The details can be found in [Theorem 3.2, \cite{Nagy}]. We start with the following definition, which will be essential for the canonical decomposition of the $\Gamma_{E(3; 3; 1, 1, 1)}$-contraction and the $\Gamma_{E(3; 2; 1, 2)}$-contraction.
	
	Let us recall \textit{spectrum, spectral radius, numerical radius} of a bounded operator $T$. The spectrum $\sigma(T)$ of $T$ is defined by
	\[\sigma(T) = \{\lambda \in \mathbb{C} : T - \lambda I \,\, \text{is not invertible}\}.\]
	The spectral radius of $T$ is denoted by $r(T)$  and defined by
	\[r(T) = \sup_{\lambda \in \sigma(T)} |\lambda|.\]
	In addition to it, the numerical radius $w(T)$ of $T$ is defined by
	\[w(T) = \sup_{||x|| \leqslant 1} |\langle Tx, x \rangle|.\]
	By some routine computation we can show that
	\[r(T) \leqslant w(T) \leqslant ||T|| \leqslant 2w(T).\]
		
	Let $T$ be a contraction a Hilbert space $\mathcal{H}$. The \textit{defect operator} of $T$ is defined by $D_T = (I - T^*T)^{1/2}$ and the \textit{defect space} of $T$ is defined by $\mathcal{D}_T = \overline{\Ran}D_T$. It follows from \cite{Nagy} that $D_T$ and $D_{T^*}$ satisfy the following identities:
	\begin{equation*}
		\begin{aligned}
			TD_T &= D_{T^*}T.
		\end{aligned}
	\end{equation*}	
	The \textit{characteristic function} $\Theta_T$ of $T$ is defined as follows:
	\begin{equation}\label{Characteristic}
		\begin{aligned}
			\Theta_T(z)
			&= (- T + D_{T^*}(I - zT^*)^{-1}D_T)|_{\mathcal{D}_T}, \,\, \text{for all} \,\, z \in \mathbb{D}.
		\end{aligned}
	\end{equation}
	Note that $\Theta_T \in \mathcal{B}(\mathcal{D}_T, \mathcal{D}_{T^*})$. We define a multiplication operator $M_{\Theta_T} : H^2(\mathbb{D}) \otimes \mathcal{D}_T \to H^2(\mathbb{D}) \otimes \mathcal{D}_{T^*}$ by
	\begin{equation}\label{M_Theta_T}
		\begin{aligned}
			M_{\Theta_T}f(z) = \Theta_T(z)f(z) \,\, \text{for} \,\, z \in \mathbb{D},
		\end{aligned}
	\end{equation}
	and also define $\mathcal{H}_T = (H^2(\mathbb{D}) \otimes \mathcal{D}_{T^*}) \ominus M_{\Theta_T}(H^2(\mathbb{D}) \otimes \mathcal{D}_T)$. We call $\mathcal{H}_T$ the \textit{model space for $T$}. The following theorem describes the functional model for pure contraction \cite{Nagy}.
	
	\begin{thm}\label{Pure Contraction Model}
		Every pure contraction $T$ defined on a Hilbert space $\mathcal{H}$ is unitarily 
		equivalent to the operator $T_1$ on the Hilbert space $\mathcal{H}_T = (H^2(\mathbb{D}) \otimes \mathcal{D}_{T^*}) \ominus M_{\Theta_T}(H^2(\mathbb{D}) \otimes \mathcal{D}_T)$ defined as
		\begin{equation}\label{Model}
			\begin{aligned}
				T_1 &= P_{\mathcal{H}_T}
				(M_z \otimes I_{\mathcal{D}_{T^*}})|_{\mathcal{H}_T}.
			\end{aligned}
		\end{equation}
	\end{thm}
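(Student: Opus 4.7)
The plan is to construct an explicit isometric embedding $\Pi:\mathcal H\to H^2(\mathbb D)\otimes\mathcal D_{T^*}$, verify that it intertwines $T^*$ with the backward shift, and then identify its range with the model space $\mathcal H_T$ defined before the theorem. The unitary equivalence will follow by taking adjoints of the intertwining relation.

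\textbf{Step 1 (define $\Pi$).} For $h\in\mathcal H$ I would set
\begin{equation*}
\Pi h \;=\; \sum_{n=0}^{\infty} z^{n}\,D_{T^*}T^{*n}h.
\end{equation*}
Using the telescoping identity $\|h\|^2-\|T^{*(N+1)}h\|^2=\sum_{n=0}^{N}\|D_{T^*}T^{*n}h\|^2$ together with the purity assumption $T^{*n}\to 0$ strongly, I can pass to the limit to get $\|h\|^2=\sum_{n=0}^\infty\|D_{T^*}T^{*n}h\|^2=\|\Pi h\|^2$. So $\Pi$ is a well-defined isometry into $H^2(\mathbb D)\otimes\mathcal D_{T^*}$.

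\textbf{Step 2 (intertwining).} A direct computation of the coefficients gives
\begin{equation*}
(M_z^{*}\otimes I_{\mathcal D_{T^*}})\Pi h \;=\; \sum_{n=0}^{\infty} z^{n}\,D_{T^*}T^{*(n+1)}h \;=\; \Pi T^{*}h ,
\end{equation*}
so $\Pi T^{*}=(M_z^{*}\otimes I)\Pi$. Taking adjoints yields $T\Pi^{*}=\Pi^{*}(M_z\otimes I)$.

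\textbf{Step 3 (identifying the range).} This is the main technical step. The goal is to show that $\operatorname{Ran}\Pi=\mathcal H_T$, equivalently that $\Pi\Pi^{*}=I-M_{\Theta_T}M_{\Theta_T}^{*}$ on $H^2(\mathbb D)\otimes\mathcal D_{T^*}$. I would compute $\Pi^{*}$ on reproducing kernel sections: for $w\in\mathbb D$ and $\xi\in\mathcal D_{T^*}$,
\begin{equation*}
\Pi^{*}\bigl(k_w\otimes\xi\bigr) \;=\; \sum_{n=0}^{\infty}\ov w^{\,n}T^{n}D_{T^*}\xi \;=\; (I-\ov w\,T)^{-1}D_{T^*}\xi ,
\end{equation*}
and then use the standard operator identity
\begin{equation*}
I-\Theta_T(z)\Theta_T(w)^{*} \;=\; D_{T^*}(I-zT^{*})^{-1}(I-\ov w T)^{-1}D_{T^*}\bigl(1-z\ov w\bigr),
\end{equation*}
which follows from the definition \eqref{Characteristic} of $\Theta_T$ together with $TD_T=D_{T^*}T$. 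Dividing by $(1-z\ov w)$ and recognising the left side as the reproducing kernel of the model space $\mathcal H_T$ shows that $\Pi\Pi^{*}$ agrees with the orthogonal projection onto $\mathcal H_T$ on a total set; the equality $\Pi\Pi^{*}=P_{\mathcal H_T}$ then extends by linearity and continuity.

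\textbf{Step 4 (conclusion).} Because $\Pi$ is isometric with range exactly $\mathcal H_T$, it is a unitary $U:\mathcal H\to\mathcal H_T$. Conjugating the intertwining from Step 2 gives
\begin{equation*}
U T U^{*} \;=\; \Pi T\Pi^{*} \;=\; \Pi\Pi^{*}(M_z\otimes I_{\mathcal D_{T^*}}) \;=\; P_{\mathcal H_T}(M_z\otimes I_{\mathcal D_{T^*}})\big|_{\mathcal H_T} \;=\; T_1,
\end{equation*}
which is the required unitary equivalence. The principal obstacle is Step 3: verifying the kernel identity cleanly and propagating it from kernel sections to all of $H^2(\mathbb D)\otimes\mathcal D_{T^*}$; the purity hypothesis is essential here because without it $\Pi$ would merely be a contraction and the range formula would fail on the unitary summand of $T$.
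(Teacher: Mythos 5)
Your proposal is correct and is the standard Sz.-Nagy--Foias argument; the paper itself offers no proof of this theorem, merely citing \cite{Nagy}, so there is nothing to diverge from. In fact your construction is exactly the machinery the paper deploys later: your $\Pi$ is the isometry $W$ of \eqref{W}, and the key identity $\Pi\Pi^{*}=I-M_{\Theta_T}M_{\Theta_T}^{*}$ of your Step 3 is precisely Lemma \ref{Lem 1}. One caveat: the kernel identity you invoke in Step 3 holds for the standard characteristic function $\Theta_T(z)=\bigl(-T+zD_{T^*}(I-zT^*)^{-1}D_T\bigr)\big|_{\mathcal D_T}$; the paper's display \eqref{Characteristic} omits the factor $z$ (evidently a typo), so you are implicitly correcting it, and you should also record the (classical, purity-dependent) fact that $\Theta_T$ is inner so that $M_{\Theta_T}M_{\Theta_T}^{*}$ really is the projection onto $M_{\Theta_T}(H^2(\mathbb D)\otimes\mathcal D_T)$.
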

	
	Let $\mathcal M_{n\times n}(\mathbb{C})$ be the set of all $n\times n$ complex matrices and  $E$ be a linear subspace of $\mathcal M_{n\times n}(\mathbb{C}).$ We define the function $\mu_{E}: \mathcal M_{n\times n}(\mathbb{C}) \to [0,\infty)$ as follows:
\begin{equation}\label{mu}
\mu_{E}(A):=\frac{1}{\inf\{\|X\|: \,\ \det(1-AX)=0,\,\, X\in E\}},\;\; A\in \mathcal M_{n\times n}(\mathbb{C})
	\end{equation}
with the understanding that $\mu_{E}(A):=0$ if $1-AX$ is  nonsingular for all $X\in E$ \cite{ds, jcd}.   Here $\|\cdot\|$ denotes the operator norm. Let  $E(n;s;r_{1},\dots,r_{s})\subset \mathcal M_{n\times n}(\mathbb{C})$ be the vector subspace comprising block diagonal matrices, defined as follows:
\begin{equation}\label{ls}
    	E=E(n;s;r_{1},...,r_{s}):=\{\operatorname{diag}[z_{1}I_{r_{1}},....,z_{s}I_{r_{s}}]\in \mathcal M_{n\times n}(\mathbb{C}): z_{1},...,z_{s}\in \mathbb{C}\},
\end{equation}
 where $\sum_{i=1}^{s}r_i=n.$ We recall the definition of $\Gamma_{E(3; 3; 1, 1, 1)}$, $\Gamma_{E(3; 2; 1, 2)}$ and $\Gamma_{E(2; 2; 1, 1)}$ \cite{Abouhajar,Bharali, apal1}. The sets $\Gamma_{E{(2;2;1,1)}}$, $\Gamma_{E(3; 3; 1, 1, 1)}$ and $\Gamma_{E(3; 2; 1, 2)}$ are defined as 
 \begin{equation*}
\begin{aligned}
\Gamma_{E{(2;2;1,1)}}:=\Big \{\textbf{x}=(x_1=a_{11}, x_2=a_{22}, x_3=a_{11}a_{22}-a_{12}a_{21}=\det A)\in \mathbb C^3: &\\A\in \mathcal M_{2\times 2}(\mathbb C)~{\rm{and}}~\mu_{E(2;2;1,1)}(A)\leq 1\Big \},
\end{aligned}
\end{equation*}	 
 \begin{equation*}
\begin{aligned}
\Gamma_{E{(3;3;1,1,1)}}:=\Big \{\textbf{x}=(x_1=a_{11}, x_2=a_{22}, x_3=a_{11}a_{22}-a_{12}a_{21}, x_4=a_{33}, x_5=a_{11}a_{33}-a_{13}a_{31},&\\ x_6=a_{22}a_{33}-a_{23}a_{32},x_7=\det A)\in \mathbb C^7: A\in \mathcal M_{3\times 3}(\mathbb C)~{\rm{and}}~\mu_{E(3;3;1,1,1)}(A)\leq 1\Big \}
\end{aligned}
\end{equation*}	
$${\rm{and}}$$  
\begin{equation*}
\begin{aligned}
\Gamma_{E(3;2;1,2)}:=\Big\{( x_1=a_{11},x_2=\det \left(\begin{smallmatrix} a_{11} & a_{12}\\
					a_{21} & a_{22}
				\end{smallmatrix}\right)+\det \left(\begin{smallmatrix}
					a_{11} & a_{13}\\
					a_{31} & a_{33}
				\end{smallmatrix}\right),x_3=\operatorname{det}A, y_1=a_{22}+a_{33}, &\\ y_2=\det  \left(\begin{smallmatrix}
					a_{22} & a_{23}\\
					a_{32} & a_{33}\end{smallmatrix})\right)\in \mathbb C^5
:A\in \mathcal M_{3\times 3}(\mathbb C)~{\rm{and}}~\mu_{E(3;2;1,2)}(A)\leq 1\Big\}.
\end{aligned}
\end{equation*}

The sets $\Gamma_{E(3; 2; 1, 2)}$ and $\Gamma_{E(2; 2; 1, 1)}$  are referred to as $\mu_{1,3}-$\textit{quotient} and tetrablock, respectively \cite{Abouhajar, Bharali}.  Studying the symmetrized bidisc and the tetrablock is essential in complex analysis and operator theory. Young's investigation of the symmetrized bidisc and the tetrablock, in collaboration with several co-authors \cite{Abouhajar, JAgler, young, ay, ay1, JAY, JANY}, has also been carried out through an operator-theoretic point of view. Agler and Young established normal dilation for a pair of commuting operators with the symmetrized bidisc as a spectral set \cite{JAgler, young}. Various authors have investigated the properties of $\Gamma_n$-isometries, $\Gamma_n$ unitaries, the Wold decomposition, and conditional dilation of $\Gamma_n$ \cite{SS, A. Pal}. T. Bhattacharyya studied the tetrablock isometries, tetrablock unitaries, the Wold decomposition for tetrablock, and conditional dilation for tetrablock \cite{Bhattacharyya}. However, whether the tetrablock and $\Gamma_n, n>3,$ have the property $P$ remains unresolved.
	
	Let  $$K=\{\textbf{x}=(x_1,\ldots,x_7)\in \Gamma_{E(3;3;1,1,1)} :x_1=\bar{x}_6x_7, x_3=\bar{x}_4x_7,x_5=\bar{x}_2x_7 ~{\rm{and}}~|x_7|=1\}$$ 
$$\rm{and}$$
\[ K_1 = \{x = (x_1, x_2, x_3, y_1, y_2) \in \Gamma_{E(3;2;1,2)} : x_1 = \overline{y}_2 x_3, x_2 = \overline{y}_1 x_3, |x_3| = 1 \}. \]

We begin with the following definitions that will be essential for our discussion.

\begin{defn}\label{def-1}
		\begin{enumerate}
			\item If $\Gamma_{E(3; 3; 1, 1, 1)}$ is a spectral set for $\textbf{T} = (T_1, \dots, T_7)$, then the $7$-tuple of commuting bounded operators $\textbf{T}$ defined on a  Hilbert space $\mathcal{H}$ is referred to as a \textit{$\Gamma_{E(3; 3; 1, 1, 1)}$-contraction}.
			
			\item Let $(S_1, S_2, S_3)$ and $(\tilde{S}_1, \tilde{S}_2)$ be tuples of commuting bounded operators defined on a Hilbert space $\mathcal{H}$ with $S_i\tilde{S}_j = \tilde{S}_jS_i$ for $1 \leqslant i \leqslant 3$ and $1 \leqslant j \leqslant 2$. We say that  $\textbf{S} = (S_1, S_2, S_3, \tilde{S}_1, \tilde{S}_2)$ is a $\Gamma_{E(3; 2; 1, 2)}$-contraction if $ \Gamma_{E(3; 2; 1, 2)}$ is a spectral set for $\textbf{S}$.
			
\item A commuting $7$-tuple of normal operators $\textbf{N} = (N_1, \dots, N_7)$ defined on a Hilbert space $\mathcal{H}$ is  a \textit{$\Gamma_{E(3; 3; 1, 1, 1)}$-unitary} if the Taylor joint spectrum $\sigma(\textbf{N})$ is contained in the set $K$.  		
			\item A commuting $5$-tuple of normal operators $\textbf{M} = (M_1, M_2, M_3, \tilde{M}_1, \tilde{M}_2)$ on a Hilbert space $\mathcal{H}$ is referred as a \textit{$\Gamma_{E(3; 2; 1, 2)}$-unitary} if the Taylor joint spectrum $\sigma(\textbf{M})$ is contained in $K_1.$ 
					
\item A  $\Gamma_{E(3; 3; 1, 1, 1)}$-isometry (respectively, $\Gamma_{E(3; 2; 1, 2)}$-isometry) is defined as the restriction of a $\Gamma_{E(3; 3; 1, 1, 1)}$-unitary (respectively, $\Gamma_{E(3; 2; 1, 2)}$-unitary)  to a joint invariant subspace. In other words, a $\Gamma_{E(3; 3; 1, 1, 1)}$-isometry ( respectively, $\Gamma_{E(3; 2; 1, 2)}$-isometry) is a $7$-tuple (respectively, $5$-tuple) of commuting bounded operators that possesses simultaneous extension to a \textit{$\Gamma_{E(3; 3; 1, 1, 1)}$-unitary} (respectively, \textit{$\Gamma_{E(3; 2; 1, 2)}$-unitary}). It is important to observe that a $\Gamma_{E(3; 3; 1, 1, 1)}$-isometry (respectively, $\Gamma_{E(3; 2; 1, 2)}$-isometry ) $\textbf{V}=(V_1\dots,V_7)$ (respectively,  $\textbf{W}=(W_1,W_2,W_3,\tilde{W}_1,\tilde{W}_2)$) consists of commuting subnormal operators with $V_7$ (respectively, $W_3$)  is an isometry.

\item   We say that $\textbf{V}$ (respectively, $\textbf{W}$)   is a pure $\Gamma_{E(3; 3; 1, 1, 1)}$-isometry (respectively, pure $\Gamma_{E(3; 2; 1, 2)}$-isometry) if $V_7$ (respectively, $W_3$) is a  pure isometry,  that is, a shift of some multiplicity.		
\end{enumerate}
	\end{defn}	
\begin{defn}
		\begin{enumerate}
			\item A $\Gamma_{E(3; 3; 1, 1, 1)}$-contraction $\textbf{T} = (T_1, \dots, T_7)$ is said to be completely non-unitary  {$\Gamma_{E(3; 3; 1, 1, 1)}$-contraction} if $T_7$ is a completely non-unitary contraction.
			
			\item A $\Gamma_{E(3; 2; 1, 2)}$-contraction $\textbf{S} = (S_1, S_2, S_3, \tilde{S}_1, \tilde{S}_2)$ is said to be completely non-unitary {$\Gamma_{E(3; 2; 1, 2)}$-contraction} if $S_3$ is a completely non-unitary contraction.		\end{enumerate}
	\end{defn}

We denote the unit circle by $\mathbb T.$  Let $\mathcal E$  be a separable Hilbert space. Let  $\mathcal B(\mathcal E)$ denote the space of bounded linear operators on $\mathcal E$ equipped with the operator norm. Let $H^2(\mathcal E)$ denote  the  Hardy space of analytic $\mathcal E$-valued functions defined on the unit disk  $\mathbb D$. Let $ L^2(\mathcal E)$ represent the Hilbert space of square-integrable $\mathcal E$-valued functions on the unit circle $\mathbb T,$ equipped with the natural inner product. The space $H^{\infty}(\mathcal B(\mathcal E))$ consists of bounded analytic $\mathcal B(\mathcal E)$-valued functions defined on $\mathbb D$. Let $L^{\infty}(\mathcal B(\mathcal E))$ denote the space of bounded measurable $\mathcal B(\mathcal E)$-valued functions on $\mathbb T$.  For $\phi \in L^{\infty}(\mathcal B(\mathcal E)),$ the Toeplitz operator associated with the symbol  $\phi$ is denoted by $T_{\phi}$ and is defined as follows: 
$$T_{\phi}f=P_{+}(\phi f), f \in H^2(\mathcal E),$$ where $P_{+} : L^2(\mathcal E) \to H^2(\mathcal E)$ is the orthogonal projecton.  In particular, $T_z$ is the
unilateral shift operator $M_z$ on $H^2(\mathcal E)$  and $T_{\bar{z}}$ is the backward shift $M_z^*$ on $H^2(\mathcal E)$.

We recall fundamental equations and fundamental operator for $\Gamma_{E(3; 3; 1, 1, 1)}$-contraction (respectively, $\Gamma_{E(3; 2; 1, 2)}$-contraction) from \cite{apal2}.
	
	\begin{defn}\label{fundamental}
		Let $(T_1, \dots, T_7)$ be a $7$-tuple of commuting contractions on a Hilbert space $\mathcal{H}. $ The equations 

		\begin{equation}\label{Fundamental 1}
\begin{aligned}
&T_i - T^*_{7-i} T_7 = D_{T_7}F_iD_{T_7}, \;\;\; 1\leq i\leq 6, 
\end{aligned}
\end{equation}
where $F_i\in \mathcal{B}(\mathcal{D}_{T_7}),$ are referred to as the  fundamental equations for $(T_1, \dots, T_7)$.
			\end{defn}
	
\begin{defn}\label{fundamental}
Let $(S_1, S_2, S_3, \tilde{S}_1, \tilde{S}_2)$ be a $5$-tuple of commuting bounded operators defined on a Hilbert space $\mathcal H$. The equations
\begin{equation}
			\begin{aligned}\label{funda1}
					&S_1 - \tilde{S}^*_2S_3 = D_{S_3}G_1D_{S_3},\,\, \tilde{S}_2 - S^*_1S_3 = D_{S_3}\tilde{G}_2D_{S_3},
				\end{aligned}
			\end{equation}
			$${\rm{and}}$$
		\begin{equation}
				\begin{aligned}\label{funda11}
				&\frac{S_2}{2} - \frac{\tilde{S}^*_1}{2}S_3 = D_{S_3}G_2D_{S_3}, \,\, \frac{\tilde{S}_1}{2} - \frac{S^*_2}{2}S_3 = D_{S_3}\tilde{G}_1D_{S_3},				\end{aligned}
			\end{equation}
where $G_1,2G_2,2\tilde{G}_1$ and $\tilde{G}_2$ in $\mathcal{B}(\mathcal{D}_{S_3}),$ are referred to as the  fundamental equations for $(S_1, S_2, S_3, \tilde{S}_1, \tilde{S}_2)$.		\end{defn}

	In section \ref{Admissible}, we show that for a given pure contraction $T_7$ acting on a Hilbert space $\mathcal{H}$, if $(\tilde{F}_1, \dots, \tilde{F}_6) \in \mathcal{B}(\mathcal{D}_{T^*_7})$ with $[\tilde{F}_i, \tilde{F}_j] = 0, [\tilde{F}^*_i, \tilde{F}_{7-j}] = [\tilde{F}^*_j, \tilde{F}_{7-i}]$,$w(\tilde{F}^*_i + \tilde{F}_{7-i}z) \leqslant 1$ and these operators satisfy
		\[(\tilde{F}^*_i + \tilde{F}_{7-i}z)\Theta_{T_7}(z) = \Theta_{T_7}(z)(F_i + F^*_{7-i}z) \,\, \text{for all} \,\, z \in \mathbb{D}\] for $1 \leqslant i, j \leqslant 6$ for some $(F_1, \dots, F_6) \in \mathcal{B}(\mathcal{D}_{T_7})$ with $w(F^*_i + F_{7-i}z) \leqslant 1$ for $1 \leqslant i \leqslant 6$, then there exists a $\Gamma_{E(3; 3; 1, 1, 1)}$-contraction $(T_1, \dots, T_7)$ such that $F_1, \dots, F_6$ are the fundamental operators of $(T_1, \dots, T_7)$ and $\tilde{F}_1, \dots, \tilde{F}_6$ are the fundamental operators of $(T^*_1, \dots, T^*_7)$. We also establish analogous results for pure $\Gamma_{E(3; 2; 1, 2)}$-contractions.
In Section \ref{Canonical Construction}, we present the explicit construction of a $\Gamma_{E(3; 3; 1, 1, 1)}$-unitary (respectively, $\Gamma_{E(3; 2; 1, 2)}$-unitary) arising from a $\Gamma_{E(3; 3; 1, 1, 1)}$-contraction (respectively, $\Gamma_{E(3; 2; 1, 2)}$-contraction). Section \ref{Model for Isometries} develops functional models for $\Gamma_{E(3; 3; 1, 1, 1)}$- and $\Gamma_{E(3; 2; 1, 2)}$-isometries. Finally, in Section \ref{Douglas Type Functional Model}, we study Douglas-type functional models for $\Gamma_{E(3; 3; 1, 1, 1)}$- and $\Gamma_{E(3; 2; 1, 2)}$-contractions. In Section \ref{Nagy-Foias Type Functional Model}, we develop Nagy-Foias type functional models for completely non-unitary (c.n.u.) $\Gamma_{E(3; 3; 1, 1, 1)}$-contractions and, analogously, for c.n.u. $\Gamma_{E(3; 2; 1, 2)}$-contractions. Section \ref{Schaffer Type Model} is dedicated to constructing the \textit{Schaffer type model} for the corresponding $\Gamma_{E(3; 3; 1, 1, 1)}$-isometric and $\Gamma_{E(3; 2; 1, 2)}$-isometric dilations.

	\section{Admissible Fundamental Operators of $\Gamma_{E(3; 3; 1, 1, 1)}$-Contraction and $\Gamma_{E(3; 2; 1, 2)}$-Contraction}\label{Admissible}
	
	In this section, we examine that for a given contraction $T_7$ and $(F_1, \dots, F_6) \in \mathcal{B}(\mathcal{D}_{T_7})$ and $(\tilde{F}_1, \dots, \tilde{F}_6) \in \mathcal{B}(\mathcal{D}_{T^*_7})$ such that $w(\tilde{F}^*_i + \tilde{F}_{7-i}z) \leqslant 1$ and $w(F^*_i + F_{7-i}z) \leqslant 1$ for $1 \leqslant i \leqslant 6$, does there always exist a $\Gamma_{E(3; 3; 1, 1, 1)}$-contraction $\textbf{T} = (T_1, \dots, T_7)$ such that $F_1, \dots, F_6$ are the fundamental operators of $\textbf{T}$ and $\tilde{F}_1, \dots, \tilde{F}_6$ the fundamental operators of $\textbf{T}^*$? Similarly, we investigate that when $S_3$ is a given contraction on a Hilbert space $\mathcal{H}$ and $(G_1, 2G_2, 2\tilde{G}_1, \tilde{G}_2) \in \mathcal{B}(\mathcal{D}_{S_3})$ and $(\hat{G}_1, 2\hat{G}_2, 2\hat{\tilde{G}}_1, \hat{\tilde{G}}_2) \in \mathcal{B}(\mathcal{D}_{S^*_3})$ such that $w(G^*_1 + \tilde{G}_2z) \leqslant 1, w(G^*_2 + \tilde{G}_1z) \leqslant 1$ and $w(\hat{G}^*_1 + \hat{\tilde{G}}_2z) \leqslant 1, w(\hat{G}^*_2 + \hat{\tilde{G}}_1z) \leqslant 1$ then does there exists any $\Gamma_{E(3; 2; 1, 2)}$-contraction $\textbf{S} = (S_1, S_2, S_3, \tilde{S}_1, \tilde{S}_2)$ such that $G_1, 2G_2, 2\tilde{G}_1, \tilde{G}_2$ are the fundamental operators of $\textbf{S}$ and $\hat{G}_1, 2\hat{G}_2, 2\hat{\tilde{G}}_1, \hat{\tilde{G}}_2$ are the fundamental operators of $\textbf{S}^*$?
	
	Let $\textbf{T} = (T_1, \dots, T_7)$ be a $\Gamma_{E(3; 3; 1, 1, 1)}$-contraction. Thus $T_7$ is a contraction. We define $W : \mathcal{H} \to H^2(\mathbb{D}) \otimes \mathcal{D}_{T^*_7}$ by
	\begin{equation}\label{W}
		\begin{aligned}
			W(h) &= \sum_{n \geqslant 0} z^n \otimes D_{T^*_7}T^{*n}_7h.
		\end{aligned}
	\end{equation}
	Since $T_7$ is a pure isometry, one can easily deduced that $W$ is isometry. The adjoint of $W$ is given by 
	\begin{equation}\label{W*}
		\begin{aligned}
			W^*(z^n \otimes \xi) &= T^n_7D_{T^*_7}\xi \,\, \text{for} \,\, n \in \mathbb{N} \cup \{0\}, \xi \in \mathcal{D}_{T^*_7}.
		\end{aligned}
	\end{equation}
	
	In the following we state an well known result for a contraction. We write it in terms of our terminologies.
	
	\begin{prop}[Proposition $2.2$, \cite{apal3}]\label{FiFj}
		The fundamental operators $F_i$ and $F_{7-i}$ for $1\leq i \leq 6$ of $\Gamma_{E(3; 3; 1, 1, 1)}$-contraction $\textbf{T} = (T_1, \dots, T_7)$ are the unique bounded linear operators $X_i$ and $X_{7-i}$ for $1\leq i \leq 6$ on $\mathcal D_{T_7}$ that satisfy the following operator equations: 
		\begin{equation}\label{FP 1}
			\begin{aligned}
				&D_{T_7}T_i = X_iD_{T_7} + X^*_{7-i}D_{T_7}T_7 ~\text{and}~ D_{T_7}T_{7-i} = X_{7-i}D_{T_7} + X^*_iD_{T_7}T_7~{\rm{for}}~1\leq i \leq 6.
			\end{aligned}
		\end{equation}
	\end{prop}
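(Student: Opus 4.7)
The plan is to derive the two operator equations directly from the defining fundamental equations by squaring through $D_{T_7}$ and appealing to the identity $\ker D_{T_7} = \mathcal{D}_{T_7}^{\perp}$, and then to prove uniqueness by showing any candidate pair $(X_i, X_{7-i})$ must itself satisfy the fundamental equation, which in turn has a unique solution.

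For existence, I would first take the adjoint of the fundamental equation for $T_{7-i}$ to obtain $T^*_{7-i} - T_7^* T_i = D_{T_7} F^*_{7-i} D_{T_7}$. Using commutativity $T_7 T_i = T_i T_7$, I compute
\begin{align*}
D_{T_7}^2 T_i &= T_i - T_7^* T_i T_7 = T_i - \bigl( T^*_{7-i} - D_{T_7} F^*_{7-i} D_{T_7} \bigr) T_7 \\
&= D_{T_7} F_i D_{T_7} + D_{T_7} F^*_{7-i} D_{T_7} T_7,
\end{align*}
so that $D_{T_7} \bigl[ D_{T_7} T_i - F_i D_{T_7} - F^*_{7-i} D_{T_7} T_7 \bigr] = 0$. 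Each summand of the bracketed operator takes values in $\mathcal{D}_{T_7}$ (since $F_i, F^*_{7-i} \in \mathcal{B}(\mathcal{D}_{T_7})$), and the combined range is annihilated by $D_{T_7}$, hence lies in $\mathcal{D}_{T_7} \cap \mathcal{D}_{T_7}^{\perp} = \{0\}$. This yields the first operator equation; the second follows by interchanging $i$ and $7-i$.

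For uniqueness, suppose $X_i, X_{7-i} \in \mathcal{B}(\mathcal{D}_{T_7})$ also satisfy the pair of operator equations. Multiplying the first by $D_{T_7}$ on the left and substituting $D_{T_7} X^*_{7-i} = T^*_{7-i} D_{T_7} - T_7^* D_{T_7} X_i$ (the adjoint of the second), a rearrangement shows that the operator $Y := (T_i - T^*_{7-i} T_7) - D_{T_7} X_i D_{T_7}$ satisfies $Y = T_7^* Y T_7$, and hence $Y = T_7^{*n} Y T_7^n$ for every $n$. Splitting $\mathcal{H}$ according to the canonical decomposition of $T_7$, on the unitary part $D_{T_7}$ vanishes and the fundamental equation forces $T_i - T^*_{7-i} T_7 = 0$, so $Y$ vanishes there; on the completely non-unitary part the range constraint $\operatorname{Ran} Y \subseteq \mathcal{D}_{T_7}$ together with the asymptotic behavior of $T_7^n$ forces $Y = 0$. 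Consequently $D_{T_7} X_i D_{T_7} = T_i - T^*_{7-i} T_7$, and by uniqueness of the fundamental operator (via density of $\operatorname{Ran} D_{T_7}$ in $\mathcal{D}_{T_7}$) one concludes $X_i = F_i$; the same argument gives $X_{7-i} = F_{7-i}$.

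The existence step is essentially algebraic and should go through cleanly. The main obstacle is the uniqueness step: rigorously showing that the identity $Y = T_7^{*n} Y T_7^n$ combined with $\operatorname{Ran} Y \subseteq \mathcal{D}_{T_7}$ forces $Y = 0$ on the completely non-unitary summand of a general contraction requires care with limit arguments and the asymptotic structure of $T_7^n$ on $\mathcal{D}_{T_7}$.
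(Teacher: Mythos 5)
The paper does not actually prove this proposition; it quotes it from [Proposition $2.2$, \cite{apal3}], so your argument can only be compared with the standard proof in that reference. Your existence half is correct and is essentially that standard computation: combining the fundamental equation for $T_i$ with the adjoint of the one for $T_{7-i}$ and the commutativity $T_7T_i = T_iT_7$ yields $D_{T_7}\bigl[D_{T_7}T_i - F_iD_{T_7} - F^*_{7-i}D_{T_7}T_7\bigr] = 0$, and since the bracketed operator has range in $\mathcal{D}_{T_7}$ while $\ker D_{T_7} = \mathcal{D}_{T_7}^{\perp}$, it must vanish.

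The uniqueness half has a genuine gap at exactly the point you flag. Your intermediate identity is right: if $X_i, X_{7-i}$ solve \eqref{FP 1}, then $Y := (T_i - T^*_{7-i}T_7) - D_{T_7}X_iD_{T_7}$ satisfies $Y = T_7^{*n}YT_7^{n}$ for every $n$. But the proposed conclusion --- canonical decomposition plus ``the asymptotic behavior of $T_7^n$'' on the completely non-unitary part --- fails as described: a c.n.u.\ contraction need not satisfy $T_7^n \to 0$ strongly (the unilateral shift is c.n.u.\ and $\|T_7^nh\| = \|h\|$ for all $n$), so no decay of $T_7^n$ is available there. The asymptotics that matter are those of $D_{T_7}T_7^n$, not of $T_7^n$. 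Since the fundamental equation gives $Y = D_{T_7}(F_i - X_i)D_{T_7}$, one has
\begin{equation*}
\|Yh\| = \|T_7^{*n}D_{T_7}(F_i - X_i)D_{T_7}T_7^{n}h\| \leqslant \|F_i - X_i\|\,\|D_{T_7}T_7^{n}h\|, \qquad \|D_{T_7}T_7^{n}h\|^2 = \|T_7^{n}h\|^2 - \|T_7^{n+1}h\|^2 \longrightarrow 0,
\end{equation*}
because $\|T_7^{n}h\|^2$ is a decreasing, hence convergent, sequence. This forces $Y = 0$ for an arbitrary contraction $T_7$, with no canonical decomposition needed; then $D_{T_7}(F_i - X_i)D_{T_7} = 0$ together with the density of $\operatorname{Ran}D_{T_7}$ in $\mathcal{D}_{T_7}$ gives $X_i = F_i$, and symmetrically $X_{7-i} = F_{7-i}$. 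With this replacement your proof closes; as written, the final step of the uniqueness argument is unjustified and, along the route you indicate, based on a false premise.
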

	
	We recall some results on $\Gamma_{E(3; 3; 1, 1, 1)}$-contraction that are crucial for the discussion of the main result of this section.
	
	\begin{lem}[Lemma $3.1$, \cite{apal3}]\label{Lem 1}
		Let $T_7$ be a contraction. Then
		\begin{equation}\label{W Property}
			\begin{aligned}
				WW^* + M_{\Theta_{T_7}}M^*_{\Theta_{T_7}} = I_{H^2(\mathbb{D}) \otimes \mathcal{D}_{T^*_7}}
			\end{aligned}
		\end{equation}
		holds.
	\end{lem}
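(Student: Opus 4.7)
The plan is to verify the operator identity \eqref{W Property} by testing both sides against the reproducing-kernel elements $k_w\otimes\eta$ of $H^2(\mathbb D)\otimes\mathcal D_{T_7^*}$, where $k_w(z)=(1-\bar w z)^{-1}$, and thereby reducing \eqref{W Property} to the classical Sz.-Nagy--Foias factorisation identity for the characteristic function. Since the vectors $\{k_w\otimes\eta:w\in\mathbb D,\ \eta\in\mathcal D_{T_7^*}\}$ span a dense subspace, it is enough to check the identity on inner products of the form $\langle\,\cdot\,(k_w\otimes\eta),k_z\otimes\xi\rangle$ for arbitrary $z,w\in\mathbb D$ and $\eta,\xi\in\mathcal D_{T_7^*}$.

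First I would unwind $W^*$ on these kernel vectors. Expanding $k_w=\sum_{n\ge 0}\bar w^n z^n$ and using \eqref{W*} yields
\[W^*(k_w\otimes\eta)=\sum_{n\ge 0}\bar w^n T_7^n D_{T_7^*}\eta=(I-\bar w T_7)^{-1}D_{T_7^*}\eta,\]
from which an immediate adjoint computation gives
\[\langle WW^*(k_w\otimes\eta),k_z\otimes\xi\rangle=\bigl\langle D_{T_7^*}(I-zT_7^*)^{-1}(I-\bar w T_7)^{-1}D_{T_7^*}\eta,\xi\bigr\rangle.\]
The multiplier side is handled by the standard identity $M^*_{\Theta_{T_7}}(k_w\otimes\eta)=k_w\otimes\Theta_{T_7}(w)^*\eta$, which together with $\langle k_w,k_z\rangle_{H^2(\mathbb D)}=(1-z\bar w)^{-1}$ produces
\[\langle M_{\Theta_{T_7}}M^*_{\Theta_{T_7}}(k_w\otimes\eta),k_z\otimes\xi\rangle=\frac{\langle\Theta_{T_7}(z)\Theta_{T_7}(w)^*\eta,\xi\rangle}{1-z\bar w}.\]

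Comparing these two expressions against $\langle k_w\otimes\eta,k_z\otimes\xi\rangle=\langle\eta,\xi\rangle/(1-z\bar w)$ reduces the claim to the pointwise factorisation
\[I_{\mathcal D_{T_7^*}}-\Theta_{T_7}(z)\Theta_{T_7}(w)^*=(1-z\bar w)\,D_{T_7^*}(I-zT_7^*)^{-1}(I-\bar w T_7)^{-1}D_{T_7^*},\]
which is the genuine content of the lemma and the main technical step of the proof. I would verify it directly: substitute the definition \eqref{Characteristic} of $\Theta_{T_7}$, expand $\Theta_{T_7}(z)\Theta_{T_7}(w)^*$ into its four cross terms, and simplify using the intertwining relations $T_7 D_{T_7}=D_{T_7^*}T_7$ and $D_{T_7}T_7^*=T_7^*D_{T_7^*}$ together with the defect identities $D_{T_7}^2=I-T_7^*T_7$, $D_{T_7^*}^2=I-T_7T_7^*$.

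The main obstacle is precisely this algebraic bookkeeping: the four cross pieces of $\Theta_{T_7}(z)\Theta_{T_7}(w)^*$ must telescope against the prefactor $(1-z\bar w)$ so that every term involving $zT_7^*$, $\bar w T_7$, or $T_7T_7^*$ cancels, leaving only $I_{\mathcal D_{T_7^*}}$. The resolvent relations $(I-zT_7^*)^{-1}=I+zT_7^*(I-zT_7^*)^{-1}$ and $(I-\bar w T_7)^{-1}=I+\bar w(I-\bar w T_7)^{-1}T_7$ are the key tool for managing these cancellations. Once the pointwise factorisation is in hand, the operator identity \eqref{W Property} follows immediately from the kernel reductions above.
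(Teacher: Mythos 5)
The paper does not prove this lemma; it is quoted verbatim from [Lemma 3.1, \cite{apal3}], so there is no internal argument to compare against. Your reproducing--kernel proof is correct and is the standard route: the computations $W^*(k_w\otimes\eta)=(I-\bar wT_7)^{-1}D_{T_7^*}\eta$ and $M^*_{\Theta_{T_7}}(k_w\otimes\eta)=k_w\otimes\Theta_{T_7}(w)^*\eta$ are right, the reduction to the factorisation $I-\Theta_{T_7}(z)\Theta_{T_7}(w)^*=(1-z\bar w)D_{T_7^*}(I-zT_7^*)^{-1}(I-\bar wT_7)^{-1}D_{T_7^*}$ is exactly the content of the lemma, and that identity does follow from the four-term expansion together with $T_7D_{T_7}=D_{T_7^*}T_7$ and the defect relations (conjugating the bracketed expression by $(I-zT_7^*)$ and $(I-\bar wT_7)$ collapses it to $(1-z\bar w)I$). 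Your argument also correctly avoids any purity assumption on $T_7$, which matters since the lemma is stated for an arbitrary contraction. One caveat: the factorisation you invoke holds for the \emph{standard} characteristic function $\Theta_{T}(z)=\bigl(-T+zD_{T^*}(I-zT^*)^{-1}D_T\bigr)\big|_{\mathcal D_T}$, whereas the paper's display \eqref{Characteristic} omits the factor of $z$; with that literal definition the identity already fails at $z=w=0$ (one gets $D_{T^*}(T+T^*+T^*T)D_{T^*}$ instead of $D_{T^*}^2$). This is evidently a typo in the paper rather than a flaw in your argument, but since you say ``substitute the definition \eqref{Characteristic},'' you should note that the correct definition with the $z$ factor is the one needed.
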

	
	We only state the following result. For proof see [Theorem $2.7$, \cite{apal3}].
	
	\begin{thm}[Theorem $2.7$, \cite{apal3}]\label{Thm 1}
		Let  $F_i, 1\leq i \leq 6$ be fundamental operators of a $\Gamma_{E(3; 3; 1, 1, 1)}$-contraction $\textbf{T} = (T_1, \dots, T_7)$  and $\tilde{F}_j,1\leq j \leq 6$ be  fundamental operators of a $\Gamma_{E(3; 3; 1, 1, 1)}$-contraction $\textbf{T}^* = (T^*_1, \dots, T^*_7)$. Then
		\begin{equation}\label{Fundamental P4}
			\begin{aligned}
				(F^*_i + F_{7-i}z)\Theta_{T^*_7}(z) &= \Theta_{T^*_7}(z)(\tilde{F}_i + \tilde{F}^*_{7-i}z) \,\, \textit{for} \,\, 1 \leqslant i \leqslant 6~{\rm{and~for ~all }}~ z \in \mathbb{D}.
			\end{aligned}
		\end{equation}
	\end{thm}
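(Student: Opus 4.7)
The plan is to exploit the involution $\Theta_{T_{7}}(z)^{*} = \Theta_{T^{*}_{7}}(\bar z)$, which is immediate from \eqref{Characteristic}, together with the model-theoretic intertwining carried by the operator $W$ of \eqref{W}. Taking the pointwise adjoint of the asserted identity \eqref{Fundamental P4} and replacing $\bar z$ by $z$ shows that the theorem is equivalent to the dual identity
\begin{equation*}
(\tilde{F}^{*}_{i} + \tilde{F}_{7-i}z)\,\Theta_{T_{7}}(z) \;=\; \Theta_{T_{7}}(z)\,(F_{i} + F^{*}_{7-i}z), \quad z \in \mathbb{D}.
\tag{$\star$}
\end{equation*}
It therefore suffices to establish $(\star)$.

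Assume first that $T_{7}$ is a pure contraction, so that $W$ is an isometry of $\mathcal{H}$ onto the model space $\mathcal{H}_{T_{7}}$ and $W^{*}M_{\Theta_{T_{7}}}=0$ by Lemma \ref{Lem 1}. Since $\mathbf{T}^{*} = (T^{*}_{1},\ldots,T^{*}_{7})$ is also a $\Gamma_{E(3;3;1,1,1)}$-contraction, Proposition \ref{FiFj} applied to $\mathbf{T}^{*}$ yields
\begin{equation*}
D_{T^{*}_{7}}T^{*}_{i} \;=\; \tilde{F}_{i}D_{T^{*}_{7}} + \tilde{F}^{*}_{7-i}D_{T^{*}_{7}}T^{*}_{7}, \quad 1\le i \le 6.
\end{equation*}
A direct expansion of $W(T^{*}_{i}h)$, using the commutativity $T^{*}_{i}T^{*n}_{7} = T^{*n}_{7}T^{*}_{i}$, then produces the intertwining
\begin{equation*}
W\,T^{*}_{i} \;=\; M^{*}_{\tilde{F}^{*}_{i} + \tilde{F}_{7-i}z}\,W, \qquad 1\le i \le 6.
\end{equation*}
Taking adjoints, right-multiplying by $M_{\Theta_{T_{7}}}$, and invoking $W^{*}M_{\Theta_{T_{7}}}=0$ gives $W^{*}M_{\tilde{F}^{*}_{i}+\tilde{F}_{7-i}z}M_{\Theta_{T_{7}}}=0$; hence $M_{\tilde{F}^{*}_{i}+\tilde{F}_{7-i}z}M_{\Theta_{T_{7}}}$ maps $H^{2}(\mathbb{D})\otimes \mathcal{D}_{T_{7}}$ into the range of $M_{\Theta_{T_{7}}}$. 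Since $M_{\Theta_{T_{7}}}$ is isometric in the pure case, there is a unique bounded operator $\Phi_{i}$ on $H^{2}(\mathbb{D})\otimes \mathcal{D}_{T_{7}}$ with $M_{\tilde{F}^{*}_{i}+\tilde{F}_{7-i}z}M_{\Theta_{T_{7}}} = M_{\Theta_{T_{7}}}\Phi_{i}$, and commuting with the scalar multiplications forces $\Phi_{i}M_{z} = M_{z}\Phi_{i}$. Thus $\Phi_{i} = M_{\eta_{i}}$ for some $\eta_{i}\in H^{\infty}(\mathcal{B}(\mathcal{D}_{T_{7}}))$, reducing $(\star)$ to the identification $\eta_{i}(z) = F_{i} + F^{*}_{7-i}z$.

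The main technical obstacle is pinning down $\eta_{i}$. Expanding $\Theta_{T_{7}}$ as a power series and matching coefficients of $z^{n}$ in $\Theta_{T_{7}}(z)\eta_{i}(z) = (\tilde{F}^{*}_{i}+\tilde{F}_{7-i}z)\Theta_{T_{7}}(z)$, the $z^{0}$-term forces $T_{7}\eta_{i}(0) = \tilde{F}^{*}_{i}T_{7}$ on $\mathcal{D}_{T_{7}}$. Equating the two expressions for $T^{*}_{i}$---one as the adjoint of the fundamental equation for $\mathbf{T}$ and one from the fundamental equation for $\mathbf{T}^{*}$---produces the commutator identity
\begin{equation*}
[T_{7},\,T^{*}_{7-i}] \;=\; D_{T_{7}}F_{i}D_{T_{7}} - D_{T^{*}_{7}}\tilde{F}^{*}_{i}D_{T^{*}_{7}}.
\end{equation*}
Combining this with $T_{7}D_{T_{7}} = D_{T^{*}_{7}}T_{7}$ and cancelling the outer defect factors yields $T_{7}F_{i} = \tilde{F}^{*}_{i}T_{7}$ on $\mathcal{D}_{T_{7}}$, so $\eta_{i}(0) = F_{i}$. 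A parallel coefficient match at $z^{1}$ gives $\eta_{i}'(0) = F^{*}_{7-i}$, and an inductive argument using the same defect commutations shows that every Taylor coefficient of $\eta_{i}$ of order $\ge 2$ vanishes. Finally, the pure hypothesis on $T_{7}$ is removed via the canonical decomposition of the $\Gamma_{E(3;3;1,1,1)}$-contraction into its unitary and completely non-unitary parts (the identity being trivial on the unitary part where $\Theta_{T_{7}}\equiv 0$), together with a standard approximation argument for the c.n.u.\ piece.
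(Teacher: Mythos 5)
First, note that the paper does not prove Theorem \ref{Thm 1} at all: it is imported verbatim from [Theorem $2.7$, \cite{apal3}] with the explicit remark ``For proof see [Theorem $2.7$, \cite{apal3}]''. So there is no in-paper argument to compare against, and your proposal has to be judged on its own merits. Your opening reduction is fine: the involution $\Theta_{T_7}(z)^*=\Theta_{T_7^*}(\bar z)$ does convert \eqref{Fundamental P4} into the dual identity $(\star)$, which is exactly \eqref{Admissible 1}, and the intertwining $WT_i^*=M^*_{\tilde F_i^*+\tilde F_{7-i}z}W$ together with $W^*M_{\Theta_{T_7}}=0$ correctly produces, in the pure case, a unique $\eta_i\in H^\infty(\mathcal{B}(\mathcal{D}_{T_7}))$ with $(\tilde F_i^*+\tilde F_{7-i}z)\Theta_{T_7}(z)=\Theta_{T_7}(z)\eta_i(z)$.

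The first genuine gap is the identification $\eta_i(z)=F_i+F_{7-i}^*z$. Matching the $z^0$ coefficients gives only $T_7\,\eta_i(0)=\tilde F_i^*T_7$ on $\mathcal{D}_{T_7}$, and even after you establish $T_7F_i=\tilde F_i^*T_7$ (which is true, though ``cancelling the outer defect factors'' needs the injectivity of $D_{T_7^*}$ on $\mathcal{D}_{T_7^*}$ rather than literal cancellation), you only get $T_7(\eta_i(0)-F_i)=0$; since $T_7|_{\mathcal{D}_{T_7}}$ need not be injective, $\eta_i(0)=F_i$ does not follow. The same problem recurs at every order: the $z^n$ relation always carries the non-invertible factor $-T_7$ in front of the unknown coefficient $\eta_{i,n}$, so the coefficients cannot be solved for one at a time, and the claimed ``inductive argument'' showing the higher coefficients vanish is not supplied and would not close in this form. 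The workable version of your strategy is the reverse one: verify \emph{directly}, by expanding $\Theta_{T_7}(z)$ via the resolvent series and using the fundamental equations $D_{T_7}T_i=F_iD_{T_7}+F^*_{7-i}D_{T_7}T_7$ and $D_{T_7^*}T_i^*=\tilde F_iD_{T_7^*}+\tilde F^*_{7-i}D_{T_7^*}T_7^*$, that $F_i+F^*_{7-i}z$ satisfies the intertwining identity, and then invoke the uniqueness of $\eta_i$. That computation is valid for an arbitrary contraction $T_7$, which also removes the need for your final reduction step.

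The second gap is that final reduction. The canonical decomposition splits $T_7$ into a unitary part (where the identity is vacuous) and a completely non-unitary part, but c.n.u.\ does not imply pure ($C_{\cdot 0}$), so the remaining piece is not covered by your pure-case argument, and the ``standard approximation argument'' you appeal to is not specified and is not standard here. Since the direct verification sketched above needs no purity hypothesis, the cleanest fix is to abandon the model-space detour altogether and prove $(\star)$, or \eqref{Fundamental P4} itself, by that direct computation.
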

	
	The following theorem is one of the main results of this article.
	
	\begin{thm}\label{Thm 2}
		Let $F_1, \dots, F_6$ be the fundamental operators of the $\Gamma_{E(3; 3; 1, 1, 1)}$-contraction $\textbf{T} = (T_1, \dots, T_7)$ and $\tilde{F}_1, \dots, \tilde{F}_6$ be the fundamental operators of the $\Gamma_{E(3; 3; 1, 1, 1)}$-contraction $\textbf{T}^* = (T^*_1, \dots, T^*_7)$. Then
		\begin{equation}\label{Admissible 1}
			\begin{aligned}
				(\tilde{F}^*_i + \tilde{F}_{7-i}z)\Theta_{T_7}(z) &= \Theta_{T_7}(z)(F_i + F^*_{7-i}z) \,\, \text{for all $z \in \mathbb{D} \,\, \text{and} \,\, 1 \leqslant i \leqslant 6$}.
			\end{aligned}
		\end{equation}
		
		Conversely, let $T_7$ be a pure contraction on a Hilbert space $\mathcal{H}$. Let $\tilde{F}_1, \dots, \tilde{F}_6 \in \mathcal{B}(\mathcal{D}_{T^*_7})$ such that $w(\tilde{F}^*_i + \tilde{F}_{7-i}z) \leqslant 1$ for $1 \leqslant i \leqslant 6$ that satisfy
		\begin{equation}\label{Commutative 1}
			\begin{aligned}
				&[\tilde{F}_i, \tilde{F}_j] = 0 \,\, \text{and} \,\, [\tilde{F}^*_i, \tilde{F}_{7-j}] = [\tilde{F}^*_j, \tilde{F}_{7-i}] \,\, \text{for} \,\, 1 \leqslant i, j \leqslant 6.
			\end{aligned}
		\end{equation}
		If $\tilde{F}_1, \dots, \tilde{F}_6$ satisfy \eqref{Admissible 1} for some $F_1, \dots, F_6 \in \mathcal{B}(\mathcal{D}_{T_7})$ such that $w(F^*_i + F_{7-i}z) \leqslant 1$ for $1 \leqslant i \leqslant 6$ then there exists a $\Gamma_{E(3; 3; 1, 1, 1)}$-contraction $\textbf{T} = (T_1, \dots, T_7)$ such that $F_1, \dots, F_6$ be the fundamental operators of $\textbf{T}$ and $\tilde{F}_1, \dots, \tilde{F}_6$ be the fundamental operators of $\textbf{T}^* = (T^*_1, \dots, T^*_7)$.
	\end{thm}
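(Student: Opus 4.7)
The plan is as follows. For the forward implication, I would invoke Theorem \ref{Thm 1} from \cite{apal3}, which yields the dual intertwining $(F_i^* + F_{7-i}z)\Theta_{T_7^*}(z) = \Theta_{T_7^*}(z)(\tilde{F}_i + \tilde{F}_{7-i}^* z)$. Combined with the standard identity $\Theta_{T_7^*}(z) = \Theta_{T_7}(\bar z)^*$, which is immediate from \eqref{Characteristic} and the intertwining $D_{T^*}T = TD_T$, taking adjoints on both sides and then replacing $\bar z$ by $z$ produces \eqref{Admissible 1}. This step is purely formal bookkeeping.

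For the converse, since $T_7$ is pure, I realize it via Theorem \ref{Pure Contraction Model} on its Sz.-Nagy--Foias model space $\mathcal{H}_{T_7}=(H^2(\mathbb{D})\otimes\mathcal{D}_{T_7^*})\ominus M_{\Theta_{T_7}}(H^2(\mathbb{D})\otimes\mathcal{D}_{T_7})$. On the ambient space $H^2(\mathbb{D})\otimes\mathcal{D}_{T_7^*}$ introduce the analytic Toeplitz operators
\[
M_i := I_{H^2}\otimes\tilde{F}_i^* + M_z\otimes\tilde{F}_{7-i},\quad 1\le i\le 6,\qquad M_7 := M_z\otimes I_{\mathcal{D}_{T_7^*}}.
\]
The hypothesis \eqref{Admissible 1} is exactly the intertwining
\[
M_i M_{\Theta_{T_7}} = M_{\Theta_{T_7}}\bigl(I_{H^2}\otimes F_i + M_z\otimes F_{7-i}^*\bigr)
\]
between spaces $H^2(\mathbb{D})\otimes\mathcal{D}_{T_7}$ and $H^2(\mathbb{D})\otimes\mathcal{D}_{T_7^*}$, so that $\mathcal{H}_{T_7}^\perp = \Ran M_{\Theta_{T_7}}$ is invariant under every $M_i$. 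Consequently $\mathcal{H}_{T_7}$ is co-invariant, and I define the candidate tuple by
\[
T_i := P_{\mathcal{H}_{T_7}} M_i\big|_{\mathcal{H}_{T_7}},\qquad T_i^* = M_i^*\big|_{\mathcal{H}_{T_7}},\qquad 1\le i\le 7.
\]

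Commutativity of the $T_i$'s reduces, by co-invariance, to commutativity of the $M_i$'s on the ambient space. Since $M_z$ commutes with every coefficient operator, $M_iM_j = M_jM_i$ iff the operator-valued symbols $\tilde{F}_i^* + \tilde{F}_{7-i}z$ and $\tilde{F}_j^* + \tilde{F}_{7-j}z$ commute pointwise in $z$; expanding the product and equating the coefficients of $1$, $z$, $z^2$ reduces this precisely to the hypotheses $[\tilde{F}_i,\tilde{F}_j]=0$ and $[\tilde{F}_i^*,\tilde{F}_{7-j}]=[\tilde{F}_j^*,\tilde{F}_{7-i}]$ from \eqref{Commutative 1}. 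The main obstacle is the next step: showing that $(T_1,\dots,T_7)$ is a bona fide $\Gamma_{E(3;3;1,1,1)}$-contraction, not merely a commuting tuple. The strategy is to prove that on the ambient space $(M_1,\dots,M_7)$ is a pure $\Gamma_{E(3;3;1,1,1)}$-isometry: here the numerical-radius hypotheses $w(\tilde{F}_i^* + \tilde{F}_{7-i}z)\le 1$ function as the admissibility conditions characterizing such isometries, in direct analogy with the symmetrized-polydisc and tetrablock results of \cite{A. Pal, SS, Bhattacharyya, apal2, apal3}. Once this is established, $\Gamma_{E(3;3;1,1,1)}$ is a (complete) spectral set for $(M_1,\dots,M_7)$, and because $\mathcal{H}_{T_7}^\perp$ is jointly invariant under the $M_i$'s, the spectral-set property descends to $(T_1,\dots,T_7)$ by compression.

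To finish, I would verify the two families of fundamental equations $T_i - T_{7-i}^*T_7 = D_{T_7}F_iD_{T_7}$ and $T_i^* - T_{7-i}T_7^* = D_{T_7^*}\tilde{F}_iD_{T_7^*}$ by a direct computation on the model space, testing against vectors $z^n\otimes\xi$ and using \eqref{W}, \eqref{W*}, and Lemma \ref{Lem 1} to describe the defect structure of $T_7$ in the model. The uniqueness assertion of Proposition \ref{FiFj} then forces $F_i$ and $\tilde{F}_i$ to be precisely the fundamental operators of $\textbf{T}$ and $\textbf{T}^*$, respectively. The parallel construction for $\Gamma_{E(3;2;1,2)}$-contractions would follow the same blueprint with the symbols $G_1^*+\tilde{G}_2 z$ and $G_2^*+\tilde{G}_1 z$ in place of $\tilde{F}_i^*+\tilde{F}_{7-i}z$.
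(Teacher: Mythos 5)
Your construction in the converse direction coincides with the paper's: the paper works with the isometry $W$ of \eqref{W} and sets $T_i = W^*M_{\tilde F_i^*+\tilde F_{7-i}z}W$, which is exactly your compression to $\mathcal{H}_{T_7}$ after identifying $\mathcal{H}$ with $\Ran W$; the invariance of $\Ran M_{\Theta_{T_7}}$ under the $M_i$'s, the reduction of commutativity to \eqref{Commutative 1}, the appeal to the criterion making $(M_{\tilde F_1^*+\tilde F_6z},\dots,M_{\tilde F_6^*+\tilde F_1z},M_z)$ a $\Gamma_{E(3;3;1,1,1)}$-isometry, and the descent of the spectral-set property through the co-invariant subspace are all as in the paper. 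The forward direction is likewise the same (apply Theorem \ref{Thm 1} to $\textbf{T}^*$; your detour through $\Theta_{T^*_7}(z)=\Theta_{T_7}(\bar z)^*$ and adjoints is harmless but unnecessary).

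The one step where your plan is likely to fail as written is the identification of $F_1,\dots,F_6$ as the fundamental operators of $\textbf{T}$. You propose to verify $T_i-T_{7-i}^*T_7=D_{T_7}F_iD_{T_7}$ by a direct computation against vectors $z^n\otimes\xi$ using \eqref{W}, \eqref{W*} and Lemma \ref{Lem 1}. Those tools describe $D_{T_7^*}$ and the projection onto $\Ran W$; they are precisely what makes the companion identity $T_i^*-T_{7-i}T_7^*=D_{T_7^*}\tilde F_iD_{T_7^*}$ easy (it reduces to $I-M_zM_z^*=\mathbb{P}_{\mathbb{C}}$), but they give no usable expression for $D_{T_7}$ or for how $\mathcal{D}_{T_7}$ sits inside the model, so the analogous computation for the $F_i$ does not go through the same way. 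The paper sidesteps this entirely: having shown that the $\tilde F_i$ are the fundamental operators of $\textbf{T}^*$, it lets $X_i$ denote the fundamental operators of the constructed $\textbf{T}$ (which exist by Proposition \ref{FiFj}), applies the already-proved forward direction to obtain $(\tilde F_i^*+\tilde F_{7-i}z)\Theta_{T_7}(z)=\Theta_{T_7}(z)(X_i+X_{7-i}^*z)$, compares this with the hypothesis \eqref{Admissible 1}, and cancels $\Theta_{T_7}(z)$ on the left using that $M_{\Theta_{T_7}}$ is an isometry because $T_7$ is pure, concluding $X_i=F_i$. You should replace your proposed direct computation by this argument (or else supply an explicit description of $D_{T_7}$ in the model); with that substitution the remainder of your outline matches the paper's proof.
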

	
	\begin{proof}
		One direction can be deduced by applying Theorem \ref{Thm 1} for the $\Gamma_{E(3; 3; 1, 1, 1)}$-contraction $\textbf{T}^*$.
		
		In order to prove the other direction, let $W$ be the isometry defined in \eqref{W}. Notice that
		\begin{equation}\label{Ad 1.1}
			\begin{aligned}
				M^*_zWh
				&= M^*_z\left(\sum_{n \geqslant 0} z^nD_{T^*_7}T^{*n}_7h\right)
				= \sum_{n \geqslant 0} z^nD_{T^*_7}T^{*n+1}_7h = WT^*_7h.
			\end{aligned}
		\end{equation}
		Thus $M^*_zW = WT^*_7$. Now we define
		\begin{equation}\label{T_i}
			\begin{aligned}
				T_i = W^*M_{\tilde{F}^*_i + \tilde{F}_{7-i}z}W \,\,\text{for}\,\, 1 \leqslant i \leqslant 6.
			\end{aligned}
		\end{equation}
		As $T_7$ is a pure contraction, we have $(\Ran W)^{\perp} = \Ran M_{\Theta_{T_7}}$. By $\eqref{Admissible 1}$ we have that $\Ran M_{\Theta_{T_7}}$ is invariant under $M_{\tilde{F}^*_i + \tilde{F}_{7-i}z}$; i.e., $\Ran W$ co-invariant under $M_{\tilde{F}^*_i + \tilde{F}_{7-i}z}$. We show that $\textbf{T} = (T_1, \dots, T_7)$ is a $\Gamma_{E(3; 3; 1, 1, 1)}$-contraction. First we show that $T_1, \dots, T_7$ commute. Note that
		\begin{equation}\label{Ad 1.2}
			\begin{aligned}
				T^*_iT^*_j
				&= W^*M^*_{\tilde{F}^*_i + \tilde{F}_{7-i}z}WW^*M^*_{\tilde{F}^*_j + \tilde{F}_{7-j}z}W\\
				&= W^*M^*_{\tilde{F}^*_i + \tilde{F}_{7-i}z}M^*_{\tilde{F}^*_j + \tilde{F}_{7-j}z}W \,\, (\text{as $WW^*$ is a projection onto $\Ran W$})\\
				&= W^*M^*_{\tilde{F}^*_j + \tilde{F}_{7-j}z}M^*_{\tilde{F}^*_i + \tilde{F}_{7-i}z}W \,\, (\text{by \eqref{Commutative 1}})\\
				&= W^*M^*_{\tilde{F}^*_j + \tilde{F}_{7-j}z}WW^*M^*_{\tilde{F}^*_i + \tilde{F}_{7-i}z}W \,\, (\text{as $WW^*$ is a projection onto $\Ran W$})\\
				&= T^*_jT^*_i.
			\end{aligned}
		\end{equation}
		Thus we have $T_iT_j = T_jT_i$ for $1 \leqslant i, j \leqslant 6$. To show that $T_i$ commutes with $T_7$ we proceed as follows:
		\begin{equation}\label{Ad 1.3}
			\begin{aligned}
				T^*_iT^*_7
				&= W^*M^*_{\tilde{F}^*_i + \tilde{F}_{7-i}z}WW^*M^*_zW\\
				&= W^*M^*_{\tilde{F}^*_i + \tilde{F}_{7-i}z}M^*_zW \,\, (\text{as $WW^*$ is a projection onto $\Ran W$})\\
				&= W^*M^*_zM^*_{\tilde{F}^*_i + \tilde{F}_{7-i}z}W \,\, (\text{since $M_z$ commutes with $M_{\tilde{F}^*_j + \tilde{F}_{7-j}z}$})\\
				&= W^*M^*_zWW^*M^*_{\tilde{F}^*_i + \tilde{F}_{7-i}z}W  \,\, (\text{as $WW^*$ is a projection onto $\Ran W$})\\
				&= T^*_7T^*_i.
			\end{aligned}
		\end{equation}
		Hence $T_i$ commutes with $T_7$ for $1 \leqslant i \leqslant 6$. By \eqref{Commutative 1} that $(M_{\tilde{F}^*_1 + \tilde{F}_6z}, \dots, M_{\tilde{F}^*_6 + \tilde{F}_1z}, M_z)$ a commuting $7$-tuple of operators. Then by [Theorem $4.6$, \cite{apal1}] it is clear that $(M_{\tilde{F}^*_1 + \tilde{F}_6z}, \dots, M_{\tilde{F}^*_6 + \tilde{F}_1z}, M_z)$ is a $\Gamma_{E(3; 3; 1, 1, 1)}$-isometry. We prove that $\textbf{T}$ is a $\Gamma_{E(3; 3; 1, 1, 1)}$-contraction. It follows from \eqref{Ad 1.1} and \eqref{T_i} that for any polynomial $p$ in $7$ variables
		\begin{equation}\label{Ad 1.4}
			\begin{aligned}
				p(T^*_1, \dots, T^*_6, T^*_7)
				&= W^*p(M^*_{\tilde{F}^*_1 + \tilde{F}_6z}, \dots, M^*_{\tilde{F}^*_6 + \tilde{F}_1z}, M^*_z)W.
			\end{aligned}
		\end{equation}
		It yields from \eqref{Ad 1.4} that
		\begin{equation*}
			\begin{aligned}
				||p(T^*_1, \dots, T^*_6, T^*_7)||
				&= ||W^*p(M^*_{\tilde{F}^*_1 + \tilde{F}_6z}, \dots, M^*_{\tilde{F}^*_6 + \tilde{F}_1z}, M^*_z)W||\\
				&\leqslant
				||p(M^*_{\tilde{F}^*_1 + \tilde{F}_6z}, \dots, M^*_{\tilde{F}^*_6 + \tilde{F}_1z}, M^*_z)||\\
				&\leqslant
				||p||_{\infty, \Gamma_{E(3; 3; 1, 1, 1)}}.
			\end{aligned}
		\end{equation*}
		This implies that $\textbf{T}^*$ is a $\Gamma_{E(3; 3; 1, 1, 1)}$-contraction and hence $\textbf{T}$ is a $\Gamma_{E(3; 3; 1, 1, 1)}$-contraction.
		
		Next we show that $\tilde{F}_1, \dots, \tilde{F}_6$ are the fundamental operators of $\textbf{T}^*$. Notice that for $1 \leqslant i \leqslant 6$
		\begin{equation}\label{Ad 1.5}
			\begin{aligned}
				T^*_i - T_{7-i}T^*_7
				&= W^*M^*_{\tilde{F}^*_i + \tilde{F}_{7-i}z}W - W^*M_{\tilde{F}^*_{7-i} + \tilde{F}_iz}WW^*M^*_zW\\
				&= W^*M^*_{\tilde{F}^*_i + \tilde{F}_{7-i}z}W - W^*M_{\tilde{F}^*_{7-i} + \tilde{F}_iz}M^*_zW \,\, (\text{as $WW^*$ is a projection onto $\Ran W$})\\
				&= W^*((I \otimes \tilde{F}_i) + (M^*_z \otimes \tilde{F}^*_{7-i}) - (M^*_z \otimes \tilde{F}^*_{7-i}) - (M_zM^*_z \otimes \tilde{F}_i))W\\
				&= W^*((I - M_zM^*_z) \otimes \tilde{F}_i)W\\
				&= W^*(\mathbb{P}_{\mathbb{C}} \otimes \tilde{F}_i)W\\
				&= D_{T^*_7}\tilde{F}_iD_{T^*_7}.
			\end{aligned}
		\end{equation}
		Thus it follows that $T^*_i - T_{7-i}T^*_7 = D_{T^*_7}\tilde{F}_iD_{T^*_7}$ and hence $\tilde{F}_1, \dots, \tilde{F}_6$ are the fundamental operators of $\textbf{T}^*$. Suppose $X_1, \dots, X_6$ are the fundamental operators of $\textbf{T}$. Then by part one of this theorem we get
		\begin{equation}\label{Ad 1.6}
			\begin{aligned}
				(\tilde{F}^*_i + \tilde{F}_{7-i}z)\Theta_{T_7}(z) &= \Theta_{T_7}(z)(X_i + X^*_{7-i}z) \,\, \text{for all $z \in \mathbb{D}, 1 \leqslant i \leqslant 6$}.
			\end{aligned}
		\end{equation}
		By \eqref{Ad 1.6} and \eqref{Admissible 1} we have that
		\begin{equation}\label{Ad 1.7}
			\begin{aligned}
				\Theta_{T_7}(z)(X_i + X^*_{7-i}z) &= \Theta_{T_7}(z)(F_i + F^*_{7-i}z) \,\, \text{for $1 \leqslant i \leqslant 6$}.
			\end{aligned}
		\end{equation}
		As $T_7$ is pure, $M_{\Theta_{T_7}}$ is an isometry and hence from \eqref{Ad 1.7} we obtain $X_i + X^*_{7-i}z = F_i + F^*_{7-i}z$ for $1 \leqslant i \leqslant 6$ and $ z \in \mathbb{D}$. From here it is immediate that $X_i = F_i$ for $1 \leqslant i \leqslant 6$. Therefore, $F_1, \dots, F_6$ are the fundamental operators of $\textbf{T}$. This completes the proof.
	\end{proof}
	
	In the subsequent part of this section, we develop an analogous result for $\Gamma_{E(3; 2; 1, 2)}$-contraction. Let $\textbf{S} = (S_1, S_2, S_3, \tilde{S}_1, \tilde{S}_2)$ be a $\Gamma_{E(3; 2; 1, 2)}$-contraction. Then $S_3$ is a contraction. We define
	$\tilde{W} : \mathcal{H} \to H^2(\mathbb{D}) \otimes \mathcal{D}_{S^*_3}$ by \begin{equation}\label{W tilde}
		\begin{aligned}
			\tilde{W}(h) &= \sum_{n \geqslant 0} z^n \otimes D_{S^*_3}S^{*n}_3h
		\end{aligned}
	\end{equation}
	As $S_3$ is an isometry, we deduce that $\tilde{W}$ is an isometry. The adjoint of $\tilde{W}^*$ has the following form
	\begin{equation}\label{W tilde*}
		\begin{aligned}
			\tilde{W}^*(z^n \otimes \eta) &= S^n_3D_{S^*_3}\eta \,\, \text{for} \,\, n \in \mathbb{N} \cup \{0\}, \eta \in \mathcal{D}_{S^*_3}.
		\end{aligned}
	\end{equation}
	
	\begin{prop}[Proposition $2.9$, \cite{apal3}]\label{s1s3}
		The fundamental operators  of a $\Gamma_{E(3; 2; 1, 2)}$-contraction $\textbf{S} = (S_1, S_2, S_3,\\ \tilde{S}_1, \tilde{S}_2)$ are the unique operators $G_1,\tilde{G}_2,G_2$ and $\tilde{G}_1$  defined on $\mathcal{D}_{S_3}$ which satisfy the following operator equations
		\begin{equation}\label{s3}
			\begin{aligned}
				&D_{S_3}S_1 = G_1D_{S_3} + \tilde{G}_2^*D_{S_3}S_3, \,\, D_{S_3}\tilde{S}_2 = \tilde{G}_2D_{S_3} + G_1^*D_{S_3}S_3, \\&\,\, ~~~~~~~~~~~~~~~~~~~~~~~~~~~~~~~~~~~~~~~~~~~~\,\,\,\,\,\,\,\,\,\,\,\,\,\,\,\,\,\,\,\,\,\,\,\,\,\,\,\,\,\,\,\,\,\,\,\,\,\,\,\,\,\,\,\,\,\,\,\,\,\,\,\,\ \text{and}\\
				&D_{S_3}\frac{S_2}{2} = G_2D_{S_3} + \tilde{G}^*_1D_{S_3}S_3, \,\, D_{S_3}\frac{\tilde{S}_1}{2} = \tilde{G}_1D_{S_3} + G^*_2D_{S_3}S_3.
			\end{aligned}
		\end{equation}
	\end{prop}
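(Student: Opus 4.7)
The plan is to derive the four equations in \eqref{s3} directly from the fundamental equations \eqref{funda1} and \eqref{funda11}, using commutativity of the five-tuple and injectivity of $D_{S_3}$ on its defect space, and then to establish uniqueness via a coupled system analysis.

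For the first equation $D_{S_3} S_1 = G_1 D_{S_3} + \tilde{G}_2^* D_{S_3} S_3$, I begin with the pair of fundamental equations
\[ S_1 - \tilde{S}_2^* S_3 = D_{S_3} G_1 D_{S_3}, \qquad \tilde{S}_2 - S_1^* S_3 = D_{S_3} \tilde{G}_2 D_{S_3}. \]
Taking the adjoint of the second and right-multiplying by $S_3$ yields $\tilde{S}_2^* S_3 - S_3^* S_1 S_3 = D_{S_3} \tilde{G}_2^* D_{S_3} S_3$. Substituting the expression for $\tilde{S}_2^* S_3$ from the first equation and using $S_1 S_3 = S_3 S_1$ (so that $S_3^* S_1 S_3 = S_3^* S_3 S_1$), I obtain
\[ D_{S_3}^2 S_1 = (I - S_3^* S_3) S_1 = D_{S_3}\bigl[G_1 D_{S_3} + \tilde{G}_2^* D_{S_3} S_3\bigr]. \]
Since $\ker{D_{S_3}} = \mathcal{D}_{S_3}^{\perp}$, the restriction $D_{S_3}\big|_{\mathcal{D}_{S_3}}$ is injective; because both $D_{S_3} S_1 h$ and the bracketed expression applied to $h$ lie in $\mathcal{D}_{S_3}$, cancellation of the outer $D_{S_3}$ produces the desired identity. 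The partner equation $D_{S_3} \tilde{S}_2 = \tilde{G}_2 D_{S_3} + G_1^* D_{S_3} S_3$ follows by the mirror procedure (take the adjoint of the first fundamental equation, right-multiply by $S_3$, and substitute into the second). The equations for $(G_2, \tilde{G}_1)$ are obtained verbatim from \eqref{funda11}.

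For uniqueness, suppose $X_1, Y_1 \in \mathcal{B}(\mathcal{D}_{S_3})$ also satisfy the two displayed equations involving $S_1$ and $\tilde{S}_2$. Setting $A := G_1 - X_1$ and $B := \tilde{G}_2 - Y_1$, subtraction yields the coupled homogeneous system
\[ A D_{S_3} + B^* D_{S_3} S_3 = 0, \qquad B D_{S_3} + A^* D_{S_3} S_3 = 0. \]
Computing norms of each identity applied to a generic vector $h \in \mathcal{H}$ and adding gives
\[ \langle (A^*A + B^*B) D_{S_3} h, D_{S_3} h\rangle = \langle (AA^* + BB^*) D_{S_3} S_3 h, D_{S_3} S_3 h\rangle. \]
Iterating with $h \mapsto S_3^n h$, a telescoping-sum argument — exploiting that $S_3$ is a contraction so the tail terms vanish — together with the density of $\Ran\, D_{S_3}$ in $\mathcal{D}_{S_3}$, forces $A D_{S_3} = B D_{S_3} = 0$ and hence $A = B = 0$ on $\mathcal{D}_{S_3}$. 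The pair $(G_2, \tilde{G}_1)$ is unique by the identical argument applied to \eqref{funda11}.

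The main obstacle is the uniqueness step: each homogeneous equation couples $A$ with $B^*$ (and $B$ with $A^*$), so injectivity of $D_{S_3}$ alone does not suffice to conclude triviality. One must leverage both the symmetric cross-coupling of the two equations and the iterative action of $S_3$ to close the argument. By contrast, the existence direction is a clean chain of adjoint-and-substitute manipulations relying only on the commutation relations among $S_1, \tilde{S}_2, S_3$ and the identity $D_{S_3}^2 = I - S_3^* S_3$.
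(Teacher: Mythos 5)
The paper does not actually prove this proposition; it imports it verbatim as [Proposition 2.9, \cite{apal3}], so your argument can only be judged on its own terms. Your existence half is correct and is the standard derivation: take the adjoint of one fundamental equation, right-multiply by $S_3$, substitute into the other, use $S_1S_3=S_3S_1$ to produce $D_{S_3}^2S_1=D_{S_3}\bigl(G_1D_{S_3}+\tilde G_2^*D_{S_3}S_3\bigr)$, and cancel the outer $D_{S_3}$ because both sides of the bracketed difference take values in $\mathcal{D}_{S_3}=(\ker{D_{S_3}})^{\perp}$, where $D_{S_3}$ is injective.

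The uniqueness half has a genuine gap. The identity you display,
\begin{equation*}
\bigl\langle (A^*A+B^*B)D_{S_3}h,\,D_{S_3}h\bigr\rangle \;=\; \bigl\langle (AA^*+BB^*)D_{S_3}S_3h,\,D_{S_3}S_3h\bigr\rangle ,
\end{equation*}
is correct, but it does not telescope: substituting $h\mapsto S_3h$ produces an equation whose left-hand side involves $A^*A+B^*B$ while the right-hand side of the previous step involves $AA^*+BB^*$, and these positive operators are different, so the chain never closes and no sum collapses. Saying that one "must leverage the cross-coupling" names the obstacle without removing it. A route that does close is the following. Write $D=D_{S_3}$ and suppose $X,Y$ solve the first pair of equations. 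From $XD=DS_1-Y^*DS_3$ and the adjoint of the second equation, $DY^*=\tilde S_2^*D-S_3^*DX$, one computes
\begin{equation*}
DXD \;=\; \bigl(S_1-\tilde S_2^*S_3\bigr)-S_3^*\bigl(S_1-\tilde S_2^*S_3\bigr)S_3+S_3^*(DXD)S_3,
\end{equation*}
so that $Z:=\bigl(S_1-\tilde S_2^*S_3\bigr)-DXD$ satisfies $Z=S_3^{*}ZS_3$, hence $Z=S_3^{*n}ZS_3^{n}$ for all $n$. Since $Z=D(G_1-X)D$ by the fundamental equation, $\langle Zh,h\rangle=\langle (G_1-X)DS_3^{n}h,DS_3^{n}h\rangle$ and $\|DS_3^{n}h\|^2=\|S_3^{n}h\|^2-\|S_3^{n+1}h\|^2\to 0$, which gives $Z=0$, i.e. $DXD=S_1-\tilde S_2^*S_3$; uniqueness of the solution of the fundamental equation then yields $X=G_1$, and symmetrically $Y=\tilde G_2$. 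This is the "iterative action of $S_3$" you allude to, but it must be applied to the single self-improving identity $Z=S_3^*ZS_3$, not to the mismatched pair of quadratic forms in your write-up.
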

	
	We recall some results on $\Gamma_{E(3; 2; 1, 2)}$-contraction that play important roll in the remain part of this section.
	
	\begin{lem}[Lemma $3.1$, \cite{apal3}]\label{Lem 2}
		If $S_3$ is a contraction then
		\begin{equation}\label{W tilde Property}
			\begin{aligned}
				\tilde{W}\tilde{W}^* + M_{\Theta_{S_3}}M^*_{\Theta_{S_3}} = I_{H^2(\mathbb{D}) \otimes \mathcal{D}_{S^*_3}}.
			\end{aligned}
		\end{equation}
	\end{lem}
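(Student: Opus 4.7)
The plan is to reduce the operator identity on $H^2(\mathbb{D}) \otimes \mathcal{D}_{S^*_3}$ to a scalar reproducing-kernel identity by testing both sides against the Szeg\H{o} kernel vectors. First, I would record the classical Sz.-Nagy--Foia\c{s} factorization
\begin{equation*}
I_{\mathcal{D}_{S^*_3}} - \Theta_{S_3}(z)\Theta_{S_3}(w)^* = (1 - z\bar{w})\,D_{S^*_3}(I - zS^*_3)^{-1}(I - \bar{w}S_3)^{-1}D_{S^*_3}, \qquad z, w \in \mathbb{D}.
\end{equation*}
This is proved by direct algebraic expansion starting from the defining formula \eqref{Characteristic} and repeatedly using the defect relations $D_{S_3}^2 = I - S_3^*S_3$, $D_{S^*_3}^2 = I - S_3S_3^*$, together with the intertwining $S_3D_{S_3} = D_{S^*_3}S_3$ and its adjoint $D_{S_3}S_3^* = S_3^*D_{S^*_3}$, to collapse the cross terms into the Cauchy-type form on the right.

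Next, I would convert this functional identity into the operator identity by evaluating the left-hand side of \eqref{W tilde Property} on a dense family of vectors $k_w \otimes \eta$, where $k_w(z) = (1 - z\bar w)^{-1}$ is the Szeg\H{o} kernel for $H^2(\mathbb{D})$ and $\eta \in \mathcal{D}_{S^*_3}$. Writing $k_w = \sum_{n \geq 0} \bar{w}^n z^n$ and using \eqref{W tilde*} gives $\tilde{W}^*(k_w \otimes \eta) = (I - \bar{w}S_3)^{-1}D_{S^*_3}\eta$, and then \eqref{W tilde} yields
\[\tilde{W}\tilde{W}^*(k_w \otimes \eta)(z) = D_{S^*_3}(I - zS^*_3)^{-1}(I - \bar{w}S_3)^{-1}D_{S^*_3}\eta.\]
Separately, the standard reproducing-kernel computation $M^*_{\Theta_{S_3}}(k_w \otimes \eta) = k_w \otimes \Theta_{S_3}(w)^*\eta$ gives
\[M_{\Theta_{S_3}}M^*_{\Theta_{S_3}}(k_w \otimes \eta)(z) = k_w(z)\,\Theta_{S_3}(z)\Theta_{S_3}(w)^*\eta.\]
Adding these two displays and invoking the factorization above, the right-hand sides combine to $k_w(z)\eta = (k_w \otimes \eta)(z)$, so the left-hand side of \eqref{W tilde Property} acts as the identity on every $k_w \otimes \eta$. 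Since the linear span of such vectors is dense in $H^2(\mathbb{D}) \otimes \mathcal{D}_{S^*_3}$ and the operator in question is bounded, the identity \eqref{W tilde Property} follows.

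The main obstacle sits in the first step: expanding $\Theta_{S_3}(z)\Theta_{S_3}(w)^*$ produces four terms, and collapsing the cross terms into the compact form on the right requires careful use of the geometric series for $(I - zS_3^*)^{-1}$ and $(I - \bar wS_3)^{-1}$ together with the intertwinings, with all operators restricted appropriately to $\mathcal{D}_{S_3}$ and $\mathcal{D}_{S^*_3}$. A purely computational alternative, if one prefers to avoid the reproducing-kernel translation, is to verify both sides of \eqref{W tilde Property} directly on the orthonormal-type basis $\{z^n \otimes \xi\}_{n \geq 0,\, \xi \in \mathcal{D}_{S^*_3}}$; the matrix entries organize into Toeplitz-like expressions and reduce using the same defect relations.
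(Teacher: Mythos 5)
Your argument is correct and complete in outline. The paper itself does not prove this lemma --- it is stated as a recollection of Lemma 3.1 of \cite{apal3} and the proof is deferred to that reference --- so there is no in-paper proof to compare against; but your reproducing-kernel route is the standard one and all the individual steps check out: $\tilde{W}^*(k_w\otimes\eta)=(I-\bar wS_3)^{-1}D_{S^*_3}\eta$ follows from \eqref{W tilde*}, the formula $M^*_{\Theta_{S_3}}(k_w\otimes\eta)=k_w\otimes\Theta_{S_3}(w)^*\eta$ is the usual adjoint-of-a-multiplier computation, and the Sz.-Nagy--Foia\c{s} factorization
\begin{equation*}
I_{\mathcal{D}_{S^*_3}}-\Theta_{S_3}(z)\Theta_{S_3}(w)^*=(1-z\bar w)\,D_{S^*_3}(I-zS^*_3)^{-1}(I-\bar wS_3)^{-1}D_{S^*_3}
\end{equation*}
does collapse the sum to $k_w(z)\eta$ on the dense span of kernel vectors. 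One caveat you should make explicit: this factorization identity holds for the characteristic function in its standard normalization $\Theta_{S_3}(z)=\bigl(-S_3+zD_{S^*_3}(I-zS^*_3)^{-1}D_{S_3}\bigr)|_{\mathcal{D}_{S_3}}$, whereas the paper's display \eqref{Characteristic} omits the factor of $z$ in front of $D_{T^*}(I-zT^*)^{-1}D_T$ (evidently a typo, since with that formula $\Theta_T(0)=-T+D_{T^*}D_T$ rather than $-T$ and the identity fails). Your proof silently uses the correct normalization; a referee would want you to flag that you are working with the standard definition. With that noted, the verification of the factorization by expanding the four terms and using $S_3D_{S_3}=D_{S^*_3}S_3$, $D_{S_3}S_3^*=S_3^*D_{S^*_3}$, $D_{S_3}^2=I-S_3^*S_3$, $D_{S^*_3}^2=I-S_3S_3^*$ is routine and your sketch of it is accurate.
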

	
	We only state the following result. For proof see [Theorem $2.7$, \cite{apal3}].
	
	\begin{thm}[Theorem $2.14$, \cite{apal3}]\label{Thm 3}
		Let $\textbf{S} = (S_1, S_2, S_3, \tilde{S}_1, \tilde{S}_2)$ be a $\Gamma_{E(3; 2; 1, 2)}$-contraction on a Hilbert space $\mathcal{H}$. Suppose $G_1, 2G_2, 2\tilde{G}_1, \tilde{G}_2$ and $\hat{G}_1, 2\hat{G}_2, 2\hat{\tilde{G}}_1, \hat{\tilde{G}}_2$ are fundamental operators for $\textbf{S}$ and $\textbf{S}^* = (S^*_1, S^*_2, S^*_3, \tilde{S}^*_1, \tilde{S}^*_2)$ respectively. Then for all $z \in \mathbb{D}$
		\begin{enumerate}
			\item $(G^*_1 + \tilde{G}_2z)\Theta_{S^*_3}(z) = \Theta_{S^*_3}(z)(\hat{G}_1 + \hat{\tilde{G}}^*_2z)$,
			
			\item $(G^*_2 + \tilde{G}_1z)\Theta_{S^*_3}(z) = \Theta_{S^*_3}(z)(\hat{G}_2 + \hat{\tilde{G}}^*_1z)$,
			
			\item $(\tilde{G}^*_1 + G_2z)\Theta_{S^*_3}(z) = \Theta_{S^*_3}(z)(\hat{\tilde{G}}_1 + \hat{G}^*_2z)$,
			
			\item $(\tilde{G}^*_2 + G_1z)\Theta_{S^*_3}(z) = \Theta_{S^*_3}(z)(\hat{\tilde{G}}_2 + \hat{G}^*_1z)$.
		\end{enumerate}
	\end{thm}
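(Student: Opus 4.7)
My plan is to verify each of the four intertwining identities (1)--(4) by direct operator computation, treating identity (1) in detail and obtaining (2), (3), (4) by the same template. The four identities are structurally parallel: identity (4) is identity (1) with the roles of $\textbf{S}$ and $\textbf{S}^*$ interchanged, while (2) and (3) arise from the second pair of fundamental equations (those for $S_2/2$ and $\tilde{S}_1/2$) in exactly the same way that (1) and (4) arise from the first pair (those for $S_1$ and $\tilde{S}_2$).

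For identity (1), the essential ingredients are the fundamental-equation reformulations from Proposition \ref{s1s3} applied to both $\textbf{S}$ and $\textbf{S}^*$,
\[
D_{S_3} S_1 = G_1 D_{S_3} + \tilde{G}^*_2 D_{S_3} S_3, \qquad D_{S_3} \tilde{S}_2 = \tilde{G}_2 D_{S_3} + G^*_1 D_{S_3} S_3,
\]
\[
D_{S^*_3} S^*_1 = \hat{G}_1 D_{S^*_3} + \hat{\tilde{G}}^*_2 D_{S^*_3} S^*_3, \qquad D_{S^*_3} \tilde{S}^*_2 = \hat{\tilde{G}}_2 D_{S^*_3} + \hat{G}^*_1 D_{S^*_3} S^*_3,
\]
together with the defect intertwiners $S_3 D_{S_3} = D_{S^*_3} S_3$ and $D_{S_3} S^*_3 = S^*_3 D_{S^*_3}$, and the commutativity $S_1 S_3 = S_3 S_1$, $\tilde{S}_2 S_3 = S_3 \tilde{S}_2$ inherited from the fact that $\textbf{S}$ is a commuting tuple. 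Inserting the resolvent form $\Theta_{S^*_3}(z) = -S^*_3 + D_{S_3}(I - zS_3)^{-1} D_{S^*_3}$ into
\[
\Delta_1(z) := (G^*_1 + \tilde{G}_2 z)\,\Theta_{S^*_3}(z) - \Theta_{S^*_3}(z)\,(\hat{G}_1 + \hat{\tilde{G}}^*_2 z),
\]
I would multiply $\Delta_1(z)$ on the right by $(I - zS_3)$: using $D_{S^*_3}(I - zS_3) = D_{S^*_3} - z S_3 D_{S_3}$ (which is $D_{S^*_3} S_3 = S_3 D_{S_3}$ in disguise) this kills the resolvent $(I - zS_3)^{-1}$ and reduces the vanishing of $\Delta_1(z)$ to a polynomial operator identity of degree at most two in $z$.

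The three coefficient-level identities that then need to be checked are all direct consequences of the fundamental equations combined with the commutation $S_i S_3 = S_3 S_i$. Specifically, the constant coefficient collapses, after invoking the fundamental equations for $S_1$ and $S^*_1$, to the equality $D_{S_3}(S^*_1 - S^*_3 \tilde{S}_2) D_{S^*_3} = D_{S_3}(S^*_1 - \tilde{S}_2 S^*_3) D_{S^*_3}$, which is immediate from $\tilde{S}_2 S_3 = S_3 \tilde{S}_2$; the linear and quadratic coefficients are handled analogously using the equations for $\tilde{S}_2, \tilde{S}^*_2$. The main technical obstacle I expect is precisely this bookkeeping: organizing the manipulations so that $D_{S_3}$ and $D_{S^*_3}$ can always be factored out on the appropriate side so that the fundamental equations may be applied, and keeping careful track of where the defect-intertwiner relations must be used to pass between $\mathcal{D}_{S_3}$ and $\mathcal{D}_{S^*_3}$. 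No genuinely new idea is required beyond those already appearing in the proof of Theorem \ref{Thm 2}. Identities (2), (3), (4) follow by the same procedure with the pair $(G_1, \tilde{G}_2, \hat{G}_1, \hat{\tilde{G}}_2)$ replaced by the relevant fundamental-operator pair; in particular, (4) is obtained from (1) by interchanging the roles of $\textbf{S}$ and $\textbf{S}^*$.
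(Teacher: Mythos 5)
The first thing to say is that the paper contains no proof of this statement: it is imported verbatim from [Theorem $2.14$, \cite{apal3}] with the remark ``We only state the following result,'' so there is no internal argument to compare yours against, and I can only assess your outline on its own merits. The ingredients you list (the operator equations of Proposition \ref{s1s3} for both $\textbf{S}$ and $\textbf{S}^*$, the intertwinings $S_3D_{S_3}=D_{S_3^*}S_3$, and commutativity of the tuple) are indeed the right ones, and comparing Taylor coefficients of $\Theta_{S_3^*}(z)=-S_3^*+\sum_{n\ge 0}z^nD_{S_3}S_3^nD_{S_3^*}$ is the standard route for such admissible-fundamental-operator identities. However, two specific steps in your plan fail as written. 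First, right-multiplying $\Delta_1(z)$ by $(I-zS_3)$ does not produce a polynomial of degree two: the resolvent sits in the middle of $D_{S_3}(I-zS_3)^{-1}D_{S_3^*}$, and the only available intertwining, $D_{S_3^*}S_3=S_3D_{S_3}$, goes the wrong way to pull $(I-zS_3)$ leftward through $D_{S_3^*}$ and cancel it. One genuinely has to verify one identity for each coefficient $z^n$, $n\ge 0$; the coefficients with $n\ge 2$ do collapse cleanly after substituting $\tilde G_2D_{S_3}=D_{S_3}\tilde S_2-G_1^*D_{S_3}S_3$ and its $\textbf{S}^*$-analogue, but $n=0$ and $n=1$ carry extra terms from $-S_3^*$.

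Second, and more seriously, the constant coefficient is not ``immediate from $\tilde S_2S_3=S_3\tilde S_2$.'' Sandwiching it between $D_{S_3}$ and $D_{S_3^*}$ and using $D_{S_3}G_1^*D_{S_3}=S_1^*-S_3^*\tilde S_2$ together with $D_{S_3^*}\hat G_1D_{S_3^*}=S_1^*-\tilde S_2S_3^*$, it becomes $(S_1^*-S_3^*\tilde S_2)(I-S_3S_3^*-S_3^*)=(I-S_3^*S_3-S_3^*)(S_1^*-\tilde S_2S_3^*)$, and after the cancellations permitted by $S_1S_3=S_3S_1$ and $\tilde S_2S_3=S_3\tilde S_2$ what remains is
\begin{equation*}
S_3^*\tilde S_2-\tilde S_2S_3^*=S_3^*S_3S_1^*-S_1^*S_3S_3^*,
\qquad\text{equivalently}\qquad
S_3^*\,D_{S_3^*}\hat{\tilde G}_2^*D_{S_3^*}=D_{S_3}\tilde G_2D_{S_3}\,S_3^*.
\end{equation*}
This is a genuine compatibility relation between the fundamental operators of $\textbf{S}$ and of $\textbf{S}^*$ (when $S_3$ is unitary it is exactly the Fuglede-type commutation $S_1^*S_3=S_3S_1^*$); it does not follow from commutativity of the tuple alone and needs its own argument. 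So the ``bookkeeping'' you defer is where the actual content of the theorem lives. A smaller slip: interchanging the roles of $\textbf{S}$ and $\textbf{S}^*$ in (1) replaces $\Theta_{S_3^*}$ by $\Theta_{S_3}$ and yields item (4) of Theorem \ref{Thm 4}, not item (4) here; the symmetry that produces (4) from (1) is instead the swap of the two equations within the first fundamental pair, i.e.\ $(G_1,\tilde G_2)\leftrightarrow(\tilde G_2,G_1)$ and $(\hat G_1,\hat{\tilde G}_2)\leftrightarrow(\hat{\tilde G}_2,\hat G_1)$.
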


	We only state the following theorem as the proof is similar to that of Theorem \ref{Thm 2}.
	
	\begin{thm}\label{Thm 4}
		Let $\textbf{S} = (S_1, S_2, S_3, \tilde{S}_1, \tilde{S}_2)$ be a $\Gamma_{E(3; 2; 1, 2)}$-contraction on a Hilbert space $\mathcal{H}$. Suppose $G_1, 2G_2, 2\tilde{G}_1,\\ \tilde{G}_2$ and $\hat{G}_1, 2\hat{G}_2, 2\hat{\tilde{G}}_1, \hat{\tilde{G}}_2$ are fundamental operators for $\textbf{S}$ and $\textbf{S}^* = (S^*_1, S^*_2, S^*_3, \tilde{S}^*_1, \tilde{S}^*_2)$ respectively. Then for all $z \in \mathbb{D}$
		\begin{enumerate}\label{Admissible 2}
			\item\label{1} $(\hat{G}^*_1 + \hat{\tilde{G}}_2z)\Theta_{S_3}(z) = \Theta_{S_3}(z)(G_1 + \tilde{G}^*_2z)$,
			
			\item\label{2} $(\hat{G}^*_2 + \hat{\tilde{G}}_1z)\Theta_{S_3}(z) = \Theta_{S_3}(z)(G_2 + \tilde{G}^*_1z)$,
				
			\item\label{3} $(\hat{\tilde{G}}^*_1 + \hat{G}_2z)\Theta_{S_3}(z) = \Theta_{S_3}(z)(\tilde{G}_1 + G^*_2z)$,
				
			\item\label{4} $(\hat{\tilde{G}}^*_2 + \hat{G}_1z)\Theta_{S_3}(z) = \Theta_{S_3}(z)(\tilde{G}_2 + G^*_1z)$.
		\end{enumerate}
		
		Conversely, let $S_3$ be a pure contraction on a Hilbert space $\mathcal{H}$. Let $\hat{G}_1, 2\hat{G}_2, 2\hat{\tilde{G}}_1, \hat{\tilde{G}}_2 \in \mathcal{B}(\mathcal{D}_{S^*_3})$ with $w(\hat{G}^*_1 + \hat{\tilde{G}}_2z) \leqslant 1, w(\hat{G}^*_2 + \hat{\tilde{G}}_1z) \leqslant 1$ and satisfy
		\begin{equation}\label{Commutative 2(i)}
			\begin{aligned}
				&[\hat{G}_1,\hat{\tilde{G}}_i] = 0 \,\,\text{for}\,\, 1 \leq i \leq 2, [\hat{G}_2,\hat{\tilde{G}}_j]=0 \,\,\text{for}\,\, 1 \leq j \leq 2, \,\,\text{and}\,\, [\hat{G}_1, \hat{G}_2] = [\hat{\tilde{G}}_1, \hat{\tilde{G}}_2]  = 0,
			\end{aligned}
		\end{equation}
		$$\rm{and}$$
		\begin{equation}\label{Commutative 2(ii)}
			\begin{aligned}
				&[\hat{G}_1, \hat{G}^*_1] = [\hat{\tilde{G}}_2, \hat{\tilde{G}}^*_2], [\hat{G}_2, \hat{G}^*_2] = [\hat{\tilde{G}}_1, \hat{\tilde{G}}^*_1], [\hat{G}_1, \hat{\tilde{G}}^*_1] = [\hat{G}_2, \hat{\tilde{G}}^*_2],\\
				&[\hat{\tilde{G}}_1, \hat{G}^*_1] = [\hat{\tilde{G}}_2, \hat{G}^*_2], [\hat{G}_1, \hat{G}^*_2] = [\hat{\tilde{G}}_1, \hat{\tilde{G}}^*_2], [\hat{G}^*_1, \hat{G}_2] = [\hat{\tilde{G}}^*_1, \hat{\tilde{G}}_2].
			\end{aligned}
		\end{equation}
		Suppose $\hat{G}_1, 2\hat{G}_2, 2\hat{\tilde{G}}_1, \hat{\tilde{G}}_2$ satisfy $(1), (2), (3), (4)$ for some $G_1, 2G_2, 2\tilde{G}_1, \tilde{G}_2 \in \mathcal{B}(\mathcal{D}_{S_3})$ with $w(G^*_1 + \tilde{G}_2z) \leqslant 1, w(G^*_2 + \tilde{G}_1z) \leqslant 1$ then there exists a $\Gamma_{E(3; 2; 1, 2)}$-contraction $\textbf{S} = (S_1, S_2, S_3, \tilde{S}_1, \tilde{S}_2)$ such that $G_1, 2G_2, 2\tilde{G}_1, \tilde{G}_2$ be the fundamental operators of $\textbf{S}$ and $\hat{G}_1, 2\hat{G}_2, 2\hat{\tilde{G}}_1, \hat{\tilde{G}}_2$ be the fundamental operators of $\textbf{S}^* = (S^*_1, S^*_2, S^*_3, \tilde{S}^*_1, \tilde{S}^*_2)$.
	\end{thm}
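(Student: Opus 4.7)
\medskip

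\noindent\textbf{Proof proposal.} The forward implication is a direct application of Theorem \ref{Thm 3} to the $\Gamma_{E(3;2;1,2)}$-contraction $\textbf{S}^*$: since the fundamental operators of $(\textbf{S}^*)^{*}=\textbf{S}$ are precisely $G_1,2G_2,2\tilde{G}_1,\tilde{G}_2$ and those of $\textbf{S}^*$ are $\hat{G}_1,2\hat{G}_2,2\hat{\tilde{G}}_1,\hat{\tilde{G}}_2$, the four intertwining identities of Theorem \ref{Thm 3}, written with the roles of the two sets of fundamental operators interchanged, yield exactly the relations \eqref{1}--\eqref{4}.

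\medskip

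For the converse my plan is to mirror the construction used in the proof of Theorem \ref{Thm 2}. Let $\tilde{W}:\mathcal{H}\to H^2(\mathbb{D})\otimes\mathcal{D}_{S_3^*}$ be the isometry defined in \eqref{W tilde}. A short computation analogous to \eqref{Ad 1.1} gives $M_z^*\tilde{W}=\tilde{W}S_3^*$. Since $S_3$ is a pure contraction, Lemma \ref{Lem 2} ensures $(\operatorname{Ran}\tilde{W})^{\perp}=\operatorname{Ran}M_{\Theta_{S_3}}$. The four intertwining identities \eqref{1}--\eqref{4} then say precisely that $\operatorname{Ran}M_{\Theta_{S_3}}$ is invariant under each of
\[
M_{\hat{G}_1^{*}+\hat{\tilde{G}}_2 z},\ M_{\hat{G}_2^{*}+\hat{\tilde{G}}_1 z},\ M_{\hat{\tilde{G}}_1^{*}+\hat{G}_2 z},\ M_{\hat{\tilde{G}}_2^{*}+\hat{G}_1 z},
\]
i.e.\ $\operatorname{Ran}\tilde{W}$ is co-invariant under each of them. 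I then define
\begin{equation*}
\begin{aligned}
S_1 &=\tilde{W}^{*}M_{\hat{G}_1^{*}+\hat{\tilde{G}}_2 z}\tilde{W},\quad
\tilde{S}_2 =\tilde{W}^{*}M_{\hat{\tilde{G}}_2^{*}+\hat{G}_1 z}\tilde{W},\\
S_2 &=2\,\tilde{W}^{*}M_{\hat{G}_2^{*}+\hat{\tilde{G}}_1 z}\tilde{W},\quad
\tilde{S}_1 =2\,\tilde{W}^{*}M_{\hat{\tilde{G}}_1^{*}+\hat{G}_2 z}\tilde{W},\quad
S_3 =\tilde{W}^{*}M_z\tilde{W}.
\end{aligned}
\end{equation*}

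\medskip

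The commutativity of $\{S_1,S_2,S_3,\tilde{S}_1,\tilde{S}_2\}$ follows by the trick used in \eqref{Ad 1.2}--\eqref{Ad 1.3}: whenever two multiplication symbols commute on the full Hardy space, the co-invariance of $\operatorname{Ran}\tilde{W}$ lets us insert $\tilde{W}\tilde{W}^{*}$ freely and conclude that the compressions commute. The conditions \eqref{Commutative 2(i)} together with \eqref{Commutative 2(ii)} are exactly those needed to force the symbols $\hat{G}_1^{*}+\hat{\tilde{G}}_2 z,\ \hat{\tilde{G}}_2^{*}+\hat{G}_1 z,\ \hat{G}_2^{*}+\hat{\tilde{G}}_1 z,\ \hat{\tilde{G}}_1^{*}+\hat{G}_2 z$ and $z$ to commute as elements of $H^{\infty}(\mathcal{B}(\mathcal{D}_{S_3^*}))$ (the $\mathrm{z}$-independent, linear-in-$z$ and quadratic-in-$z$ coefficients each separately commute), so the plan is to check each pair of symbols term-by-term to confirm the commutation of coefficients, then read off the commutation of the compressed operators.

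\medskip

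Next I would show that $\textbf{S}$ is a $\Gamma_{E(3;2;1,2)}$-contraction. The five-tuple of multiplication operators above is a commuting family on $H^2(\mathbb{D})\otimes\mathcal{D}_{S_3^*}$ for which $M_z$ is a pure isometry, and the numerical radius bounds $w(\hat{G}_1^{*}+\hat{\tilde{G}}_2 z)\leq 1$ and $w(\hat{G}_2^{*}+\hat{\tilde{G}}_1 z)\leq 1$ are precisely the hypotheses of the $\Gamma_{E(3;2;1,2)}$ analogue of [Theorem $4.6$, \cite{apal1}], so the tuple is a $\Gamma_{E(3;2;1,2)}$-isometry. Arguing as in \eqref{Ad 1.4}, for any polynomial $p$ in five variables we get $p(S_1^{*},\frac{S_2^{*}}{2},S_3^{*},\frac{\tilde{S}_1^{*}}{2},\tilde{S}_2^{*})=\tilde{W}^{*}p(\cdots)\tilde{W}$ where the right-hand side is the compression of the corresponding polynomial in the five multiplication operators, so taking operator norms and invoking $\|\tilde{W}\|=1$ and the $\Gamma_{E(3;2;1,2)}$-isometric property yields $\|p(\textbf{S}^{*})\|\leq\|p\|_{\infty,\Gamma_{E(3;2;1,2)}}$. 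Thus $\textbf{S}^{*}$, and hence $\textbf{S}$, is a $\Gamma_{E(3;2;1,2)}$-contraction.

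\medskip

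Finally, I verify the fundamental operator equations and uniqueness. Imitating \eqref{Ad 1.5}, the identity $\tilde{W}\tilde{W}^{*}=I-M_{\Theta_{S_3}}M_{\Theta_{S_3}}^{*}$ and the fact that $I-M_zM_z^{*}=\mathbb{P}_{\mathbb{C}}\otimes I_{\mathcal{D}_{S_3^*}}$ combine to give
\[
S_1^{*}-\tilde{S}_2 S_3^{*}=D_{S_3^{*}}\hat{G}_1 D_{S_3^{*}},\quad \tilde{S}_2^{*}-S_1 S_3^{*}=D_{S_3^{*}}\hat{\tilde{G}}_2 D_{S_3^{*}},
\]
and the analogous computations with the factor $2$ produce the equations for $\hat{G}_2$ and $\hat{\tilde{G}}_1$, so $\hat{G}_1,2\hat{G}_2,2\hat{\tilde{G}}_1,\hat{\tilde{G}}_2$ are the fundamental operators of $\textbf{S}^{*}$. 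To identify the fundamental operators of $\textbf{S}$ with the given $G_1,2G_2,2\tilde{G}_1,\tilde{G}_2$, let $X_1,2X_2,2\tilde{X}_1,\tilde{X}_2$ denote the actual fundamental operators of $\textbf{S}$. Applying the forward direction to $\textbf{S}$ gives the intertwining identities \eqref{1}--\eqref{4} with $G_i,\tilde{G}_j$ replaced by $X_i,\tilde{X}_j$. Comparing with the assumed \eqref{1}--\eqref{4} and using that $M_{\Theta_{S_3}}$ is an isometry (as $S_3$ is pure), the symbols $X_1+\tilde{X}_2^{*}z$ etc.\ agree with $G_1+\tilde{G}_2^{*}z$ etc.\ on $\mathbb{D}$, hence equal as operator-valued polynomials, whence $X_i=G_i$ and $\tilde{X}_j=\tilde{G}_j$.

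\medskip

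I expect the main technical obstacle to be step three: unpacking the intertwining conditions \eqref{Commutative 2(i)}--\eqref{Commutative 2(ii)} to verify that the $2\times 2$ and $3\times 3$ sets of multiplication symbols genuinely commute pairwise, and identifying the correct bookkeeping of factors of $2$ so that the fundamental equations \eqref{funda1}--\eqref{funda11} fall out with the right constants. The structure of the argument is otherwise parallel to the proof of Theorem \ref{Thm 2}.
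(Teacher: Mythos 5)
Your proposal is correct and follows exactly the route the paper intends: the paper omits the proof of this theorem, stating only that it is ``similar to that of Theorem \ref{Thm 2}'', and your argument is precisely that adaptation (forward direction by applying Theorem \ref{Thm 3} to $\textbf{S}^*$, converse by compressing the commuting multiplication operators to $\Ran\tilde{W}$ and verifying the fundamental equations and uniqueness as in \eqref{Ad 1.2}--\eqref{Ad 1.7}). The only cosmetic point is that the $\Gamma_{E(3;2;1,2)}$-isometry criterion you need is the one cited in Example \ref{Example 2} ([Theorem $4.5$, \cite{apal2}]) rather than the analogue of [Theorem $4.6$, \cite{apal1}].
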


	\begin{rem}\label{Rem 1}
		Note that the existence of $T_i$'s in Theorem \ref{Thm 2} is unique. Infact, if we assume that there exists $T_i$ and $T^{'}_i$ different operators for $1 \leqslant i \leqslant 6$ such that $F_1, \dots, F_6$ are the fundamental operators of $\textbf{T}$ and $\textbf{T}^{'}$; and $\tilde{F}_1, \dots, \tilde{F}_6$ are the fundamental operators of $\textbf{T}^*$ and $\textbf{T}^{'*}$. Then by [Theorem $3.2$, \cite{apal3}] we have that $T_i$ and $T^{'}_i$ are both unitarily equivalent to
		\begin{equation*}
			\begin{aligned}
				P_{\mathcal{H}_{T_7}}(I \otimes \tilde{F}^*_i + M_z \otimes \tilde{F}_{7-i})|_{\mathcal{H}_{T_7}} \,\,\text{for}\,\, 1 \leqslant i \leqslant 6,
			\end{aligned}
		\end{equation*}
		where $\mathcal{H}_{T_7}$ is $\Ran W$ defined as above. This implies that $T_i = T^{'}_i$ for $1 \leqslant i \leqslant 6$.
		
		
		Moreover, in the same way, the uniqueness of the $S_1, S_2, \tilde{S}_1, \tilde{S}_2$ holds true for the case of $\Gamma_{E(3; 2; 1, 2)}$-contraction.
	\end{rem}

	\section{Canonical Constructions of $\Gamma_{E(3; 3; 1, 1, 1)}$-Unitary and $\Gamma_{E(3; 2; 1, 2)}$-Unitary}\label{Canonical Construction}
	
	In this section, we construct $\Gamma_{E(3; 3; 1, 1, 1)}$-unitary from a $\Gamma_{E(3; 3; 1, 1, 1)}$-contraction. Similarly, we construct $\Gamma_{E(3; 2; 1, 2)}$-unitary from a $\Gamma_{E(3; 2; 1, 2)}$-contraction.
	
	\subsection{Construction of $\Gamma_{E(3; 3; 1, 1, 1)}$-Unitary from a $\Gamma_{E(3; 3; 1, 1, 1)}$-Contraction}\label{Canonical Construction 1}
	
	Let $\textbf{T} = (T_1, \dots, T_7)$ be a $\Gamma_{E(3; 3; 1, 1, 1)}$-contraction acting on a Hilbert space $\mathcal{H}$. Then $T_7$ is a contraction. Thus, there exists a positive semi-definite operator $Q_{T^*_7}$ such that
	\begin{equation}\label{Q_T^*_7}
		\begin{aligned}
			Q^2_{T^*_7} &= SOT-\lim_{n \to \infty} T^n_7T^{*n}_7.
		\end{aligned}
	\end{equation}
	We define an operator $V^{(7)*}_{T^*_7} : \overline{\Ran} Q_{T^*_7} \to \overline{\Ran} Q_{T^*_7}$ densely by
	\begin{equation}\label{V^7*_T^*_7}
		\begin{aligned}
			V^{(7)*}_{T^*_7}Q_{T^*_7}h &= Q_{T^*_7}T^*_7h \,\, \text{for $h \in \mathcal{H}$}.
		\end{aligned}
	\end{equation}
	Notice that
	\begin{equation}\label{V^*_T^* isometry}
		\begin{aligned}
			||V^{(7)*}_{T^*_7}Q_{T^*}h||^2
			&= \langle V^{(7)*}_{T^*_7}Q_{T^*_7}h, V^{(7)*}_{T^*_7}Q_{T^*_7}h \rangle\\
			&= \langle Q_{T^*_7}T^*_7h, Q_{T^*_7}T^*_7h \rangle\\
			&= \langle T_7Q^2_{T^*_7}T^*_7h, h \rangle\\
			&= \lim_{n \to \infty} \langle T^{n+1}_7T^{*n+1}_7h, h \rangle\\
			&= \langle Q^2_{T^*_7}h, h \rangle\\
			&= ||Q_{T^*_7}h||^2 \,\, \text{for all $h \in \mathcal{H}$}.
		\end{aligned}		
	\end{equation}
	Thus, $V^{(7)*}_{T^*_7}$ is an isometry on $\overline{\Ran} Q_{T^*_7}$.
	Since $T_i$'s are contractions, we have $T_iT^*_i \leqslant I$ for $1 \leqslant i \leqslant 6$. We define $V^{(i)*}_{T^*_7} : \overline{\Ran} Q_{T^*_7} \to \overline{\Ran} Q_{T^*_7}$ densely by
	\begin{equation}\label{V^i*_T^*_7}
		\begin{aligned}
			V^{(i)*}_{T^*_7}Q_{T^*_7}h
			&= Q_{T^*_7}T^*_ih \,\, \text{for $1 \leqslant i \leqslant 6$}.
		\end{aligned}
	\end{equation}
	It can be easily checked that the operators defined in \eqref{V^i*_T^*_7} are well-defined.
	The commutativity of $(V^{(1)*}_{T^*_7}, \dots, V^{(6)*}_{T^*_7},\\ V^{(7)*}_{T^*_7})$ readily follows from the commutativity of $T_i$'s and definition of $V^{(i)*}_{T^*_7}$.
	
	Now, we prove that $(V^{(1)*}_{T^*_7}, \dots, V^{(6)*}_{T^*_7}, V^{(7)*}_{T^*_7})$ is a $\Gamma_{E(3; 3; 1, 1, 1)}$-isometry. Notice that because $\textbf{T}$ is a $\Gamma_{E(3; 3; 1, 1, 1)}$-contraction then so is $\textbf{T}^*$. From \eqref{V^7*_T^*_7} and \eqref{V^i*_T^*_7} we have
	\begin{equation}\label{polynomial 1}
		\begin{aligned}
			p(V^{(1)*}_{T^*_7}, \dots, V^{(6)*}_{T^*_7}, V^{(7)*}_{T^*_7})h &= Q_{T^*_7}p(T^*_1, \dots, T^*_6, T^*_7)h
		\end{aligned}
	\end{equation}
	for any polynomial $p$ in $7$ variables and $h \in \mathcal{H}$. Thus it follows from \eqref{polynomial 1} that 
	\begin{equation*}
		\begin{aligned}
			||p(V^{(1)*}_{T^*_7}, \dots, V^{(6)*}_{T^*_7}, V^{(7)*}_{T^*_7})h||
			&= ||Q_{T^*_7}p(T^*_1, \dots, T^*_6, T^*_7)h||\\
			&\leqslant ||p(T^*_1, \dots, T^*_6, T^*_7)h||\\
			&\leqslant ||p(T^*_1, \dots, T^*_6, T^*_7)||\,||h||\\
			&\leqslant ||p||_{\infty, \Gamma_{E(3; 3; 1, 1, 1)}}\,||h||.
		\end{aligned}
	\end{equation*}
	Hence, $(V^{(1)*}_{T^*_7}, \dots, V^{(6)*}_{T^*_7}, V^{(7)*}_{T^*_7})$ is a $\Gamma_{E(3; 3; 1, 1, 1)}$-contraction. Since $V^{(7)*}_{T^*_7}$ is isometry, it follows from  [Theorem $4.4$, \cite{apal2}]  that $(V^{(1)*}_{T^*_7}, \dots, V^{(6)*}_{T^*_7}, V^{(7)*}_{T^*_7})$ is a $\Gamma_{E(3; 3; 1, 1, 1)}$-isometry. By definition of $\Gamma_{E(3; 3; 1, 1, 1)}$-isometry, $(V^{(1)*}_{T^*_7}, \dots, V^{(6)*}_{T^*_7}, V^{(7)*}_{T^*_7})$ can be extended to a $\Gamma_{E(3; 3; 1, 1, 1)}$-unitary $(N^{(1)*}_D, \dots, N^{(6)*}_D, N^{(7)*}_D)$ in a larger Hilbert space $\mathcal{Q}_{T^*_7} \supseteq \overline{\Ran}Q_{T^*_7}$, where $N^{(7)*}_D$ acting on $\mathcal{Q}_{T^*_7}$ is a minimal unitary dilation of $N^{(7)*}_{T^*_7}$. Therefore, $\textbf{N} = (N^{(1)}_D, \dots, N^{(6)}_D, N^{(7)}_D)$ is a $\Gamma_{E(3; 3; 1, 1, 1)}$-unitary on $\mathcal{Q}_{T^*_7}$ with $N^{(7)}_D$ is a minimal unitary dilation of $V^{(7)}_{T^*_7}$.
	
	\begin{defn}\label{Defn 3}
		Let $\textbf{T} = (T_1, \dots, T_7)$ be a $\Gamma_{E(3; 3; 1, 1, 1)}$-contraction on a Hilbert space $\mathcal{H}$ and $\textbf{N} = (N^{(1)}_D, \dots, N^{(6)}_D, N^{(7)}_D)$ be the $\Gamma_{E(3; 3; 1, 1, 1)}$-unitary constructed from $\textbf{T}$. We call $\textbf{N}$ the \textit{canonical \, $\Gamma_{E(3; 3; 1, 1, 1)}$-unitary associated to the $\Gamma_{E(3; 3; 1, 1, 1)}$-contraction $\textbf{T}$}.
	\end{defn}
	
	We prove that the canonical $\Gamma_{E(3; 3; 1, 1, 1)}$-unitary associated with a $\Gamma_{E(3; 3; 1, 1, 1)}$-contraction is unique up to unitary equivalence.
	
	\begin{thm}\label{Thm 5}
		Let $\textbf{T} = (T_1, \dots, T_7)$ on a Hilbert space $\mathcal{H}$ and $\textbf{T}^{'} = (T^{'}_1, \dots, T^{'}_7)$ on a Hilbert space $\mathcal{H}^{'}$. Let $\textbf{N} = (N^{(1)}_D, \dots, N^{(6)}_D, N^{(7)}_D)$ and $\textbf{N}^{'} = (N^{(1)'}_D, \dots, N^{(6)'}_D, N^{(7)'}_D)$ be the respective canonical $\Gamma_{E(3; 3; 1, 1, 1)}$-unitaries. If $\textbf{T}$ and $\textbf{T}^{'}$ are unitarily equivalent via $\tau$ then $\textbf{N}$ and $\textbf{N}^{'}$ are unitarily equivalent via the map $U_{\tau} : \mathcal{Q}_{T^*_7} \to \mathcal{Q}_{T^{'*}_7}$ defined by
		\begin{equation}\label{U_tau}
			\begin{aligned}
				U_{\tau}(N^{(7)n}_DQ_{T^*_7}h)
				&= N^{(7)'n}_DQ_{T^{'*}_7}\tau h
			\end{aligned}
		\end{equation}
		for all $n \geqslant 0$ and $h \in \mathcal{H}$.
	\end{thm}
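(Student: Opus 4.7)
The plan has three steps: (1) translate the hypothesis $\tau T_7 = T'_7 \tau$ into an intertwining of the asymptotic operators $Q_{T^*_7}$; (2) use minimality of the dilation together with (1) to show that the prescription \eqref{U_tau} defines an isometry on a dense subspace that extends to a unitary $U_\tau : \mathcal{Q}_{T^*_7} \to \mathcal{Q}_{T'^*_7}$; (3) verify $U_\tau N^{(i)}_D = N^{(i)'}_D U_\tau$ for $1 \leq i \leq 7$.

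For step (1), since $\tau$ is unitary and $\tau T_7 = T'_7 \tau$, iteration yields $\tau T^n_7 T^{*n}_7 \tau^{*} = T'^{n}_7 T'^{*n}_7$ for all $n \geq 0$; passing to the SOT limit in \eqref{Q_T^*_7} gives $\tau Q^2_{T^*_7} \tau^{*} = Q^2_{T'^*_7}$, and taking positive square roots produces the key base-space identity $\tau Q_{T^*_7} = Q_{T'^*_7} \tau$. In particular the base-space operators $V^{(i)*}_{T^*_7}$ and $V^{(i)'*}_{T'^*_7}$ are intertwined via $Q_{T^*_7} h \mapsto Q_{T'^*_7} \tau h$ for each $1 \leq i \leq 7$, as follows immediately from \eqref{V^i*_T^*_7}.

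For step (2), minimality of $N^{(7)}_D$ as a unitary dilation of the co-isometry $V^{(7)}_{T^*_7}$ ensures that $\mathcal{D} = \mathrm{span}\{N^{(7)n}_D Q_{T^*_7} h : n \geq 0, h \in \mathcal{H}\}$ is dense in $\mathcal{Q}_{T^*_7}$, and similarly $\mathcal{D}' \subset \mathcal{Q}_{T'^*_7}$. Because $N^{(7)*}_D$ is a unitary extension of the isometry $V^{(7)*}_{T^*_7}$, we have $N^{(7)*m}_D Q_{T^*_7} h = Q_{T^*_7} T^{*m}_7 h$ for all $m \geq 0$, so for $n \leq m$ the inner product $\langle N^{(7)n}_D Q_{T^*_7} h, N^{(7)m}_D Q_{T^*_7} h'\rangle$ in $\mathcal{Q}_{T^*_7}$ reduces, via unitarity of $N^{(7)}_D$, to the base-space expression $\langle Q^2_{T^*_7} T^{*(m-n)}_7 h, h'\rangle$, with an analogous formula when $n \geq m$. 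Step (1) then shows that this quantity is preserved by the assignment \eqref{U_tau}, so $U_\tau$ is well-defined and isometric on $\mathcal{D}$, extends by continuity to an isometry $\mathcal{Q}_{T^*_7} \to \mathcal{Q}_{T'^*_7}$, and is surjective because its range contains $\mathcal{D}'$.

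For step (3), it is cleaner to first verify the adjoint intertwining $U_\tau N^{(i)*}_D = N^{(i)'*}_D U_\tau$. Since $\overline{\Ran}\, Q_{T^*_7}$ is jointly invariant under $\mathbf{N}^*$ with $N^{(i)*}_D|_{\overline{\Ran}\, Q_{T^*_7}} = V^{(i)*}_{T^*_7}$, and $N^{(i)*}_D$ commutes with $N^{(7)}_D$, one computes on $\mathcal{D}$ that $U_\tau N^{(i)*}_D N^{(7)n}_D Q_{T^*_7} h = U_\tau N^{(7)n}_D Q_{T^*_7} T^*_i h = N^{(7)'n}_D Q_{T'^*_7} T'^{*}_i \tau h = N^{(i)'*}_D U_\tau (N^{(7)n}_D Q_{T^*_7} h)$. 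Extending by continuity and taking adjoints (using unitarity of $U_\tau$) yields the required $U_\tau N^{(i)}_D = N^{(i)'}_D U_\tau$ for $1 \leq i \leq 7$. The main technical hurdle is the inner-product bookkeeping in step (2); once the identity $\tau Q_{T^*_7} = Q_{T'^*_7} \tau$ from step (1) is in hand, all remaining verifications are routine.
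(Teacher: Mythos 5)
Your proof is correct, and it follows the same overall skeleton as the paper's: derive $\tau Q_{T^*_7}=Q_{T^{'*}_7}\tau$ from the unitary equivalence, intertwine the compressions $V^{(i)*}_{T^*_7}$, compute on the dense set $\{N^{(7)n}_DQ_{T^*_7}h\}$, and conclude by density. There are, however, two worthwhile differences. First, your step (2) -- the inner-product bookkeeping showing that \eqref{U_tau} is well defined and isometric on the span before extending it to a unitary -- is a genuine addition: the paper simply posits $U_\tau$ by the formula \eqref{U_tau} and never checks well-definedness, so your argument closes a gap the paper leaves implicit. Second, your step (3) takes a cleaner route to the intertwining for $1\leqslant i\leqslant 6$: you verify the adjoint relation $U_\tau N^{(i)*}_D=N^{(i)'*}_DU_\tau$ directly, using only the commutativity of the tuple, the invariance of $\overline{\Ran}\,Q_{T^*_7}$ under $\mathbf{N}^*$ with $N^{(i)*}_D|_{\overline{\Ran}\,Q_{T^*_7}}=V^{(i)*}_{T^*_7}$, and $\tau T^*_i=T^{'*}_i\tau$, then take adjoints via unitarity of $U_\tau$. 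The paper instead works with $U_\tau N^{(i)}_D$ and must invoke the structural identities $N^{(7)*}_DN^{(i)}_D=N^{(7-i)*}_D$ of a $\Gamma_{E(3;3;1,1,1)}$-unitary (from [Theorem 3.2, \cite{apal2}]) together with an explicit appeal to Fuglede's theorem, in equations \eqref{CCUE 1}--\eqref{CCUE 5}. Your version avoids the $\Gamma$-unitary pair relations entirely, at the small cost of needing to know in advance that $U_\tau$ is unitary -- which is exactly what your step (2) supplies. One minor point: when you assert that $N^{(i)*}_D$ commutes with $N^{(7)}_D$, you are implicitly using Fuglede's theorem for the commuting normal tuple $\mathbf{N}$ (or you could note that $\overline{\Ran}\,Q_{T^*_7}$ reduces the relevant computation); it would be worth citing this explicitly as the paper does.
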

	
	\begin{proof}
		Let $(V^{(1)*}_{T^*_7}, \dots, V^{(6)*}_{T^*_7}, V^{(7)*}_{T^*_7})$ and $(V^{(1)*}_{T^{'*}_7}, \dots, V^{(6)*}_{T^{'*}_7}, V^{(7)*}_{T^{'*}_7})$ be the $\Gamma_{E(3; 3; 1, 1, 1)}$-isometries constructed from $\textbf{T}$ and $\textbf{T}^{'}$ respectively as in the above construction. Suppose $\mathcal{Q}_{T^*_7}$ and $\mathcal{Q}_{T^{'*}_7}$ are the underlying Hilbert spaces of the canonical $\Gamma_{E(3; 3; 1, 1, 1)}$-unitaries $\textbf{N}$ and $\textbf{N}^{'}$ obtained from $\textbf{T}$ and $\textbf{T}^{'}$ respectively. Since, $T_7$ and $T^{'}_7$ are unitarily equivalent via $\tau$ then it implies that $\tau Q_{T^*_7} = Q_{T^{'*}_7}\tau$. Hence,
		\begin{equation}\label{tau, V^i*_T^*_7 commute}
			\begin{aligned}
				\tau V^{(i)*}_{T^*_7}Q_{T^*_7}h
				&= \tau Q_{T^*_7}T^*_ih
				= Q_{T^{'*}_7}\tau T^*_ih
				= Q_{T^{'*}_7}T^{'*}_i\tau h
				= V^{(i)*}_{T^{'*}_7}Q_{T^{'*}_7}\tau h
				= V^{(i)*}_{T^{'*}_7}\tau Q_{T^*_7}h.
			\end{aligned}
		\end{equation}
		Thus, we have $\tau V^{(i)*}_{T^*_7} = V^{(i)*}_{T^{'*}_7}\tau$ for $1 \leqslant i \leqslant 6$. Using the fact that $U_{\tau}|_{\overline{\Ran}Q_{T^*_7}} = \tau$, we get
		\begin{equation*}
			\begin{aligned}
				U_{\tau}N^{(7)}_DQ_{T^*_7}h
				&= N^{(7)'}_DQ_{T^{'*}_7}\tau h
				= N^{(7)'}_D\tau Q_{T^*_7}h
				= N^{(7)'}_DU_{\tau} Q_{T^*_7}h.
			\end{aligned}
		\end{equation*}
		
		To prove the unitary equivalence we proceed as follows:
		\begin{equation}\label{CCUE 1}
			\begin{aligned}
				U_{\tau}N^{(i)}_DN^{(7)n}_DQ_{T^*_7}h
				&= U_{\tau}N^{(7)n}_DN^{(i)}_DQ_{T^*_7}h\\
				&= U_{\tau}N^{(7)n+1}_DN^{(7)*}_DN^{(i)}_DQ_{T^*_7}h
			\end{aligned}
		\end{equation}
		Since, $\textbf{N}$ is a $\Gamma_{E(3; 3; 1, 1, 1)}$-unitary then by [Theorem $3.2$, \cite{apal2}] we have $N^{(7)*}_DN^{(i)}_D = N^{(7-i)*}_D$ and hence from \eqref{CCUE 1} we obtain
		\begin{equation}\label{CCUE 2}
			\begin{aligned}
				U_{\tau}N^{(i)}_DN^{(7)n}_DQ_{T^*_7}h&= N^{(7)'n+1}_DU_{\tau}N^{(7-i)*}_DQ_{T^*_7}h.
			\end{aligned}
		\end{equation}
		Observe that $U_{\tau}|_{\overline{\Ran}Q_{T^*_7}} = \tau, N^{(i)}_D|_{\overline{\Ran} Q_{T^*_7}} = V^{(i)}_{T^*_7}$ for $1 \leqslant i \leqslant 6$. Thus, from \eqref{CCUE 2} we get
		\begin{equation}\label{CCUE 3}
			\begin{aligned}
				U_{\tau}N^{(i)}_DN^{(7)n}_DQ_{T^*_7}h
				&= N^{(7)'n+1}_D\tau V^{(7-i)*}_{T^*_7}Q_{T^*_7}h\\		&= N^{(7)'n+1}_DV^{(7-i)*}_{T^{'*}_7}Q_{T^{'*}_7}\tau h \,\, (\text{by \eqref{tau, V^i*_T^*_7 commute}})\\
				&= N^{(7)'n+1}_DN^{(7-i)'*}_DQ_{T^{'*}_7}\tau h \,\, (\text{as $N^{(7-i)}_D|_{\overline{\Ran} Q_{T^*_7}} = V^{(7-i)}_{T^*_7}$}).
			\end{aligned}
		\end{equation}
		Because, $N^{(i)}_D$ and $N^{(7)}_D$ are commuting normal operators then from \textit{Fuglede's Theorem} [Theorem 1, \cite{Fuglede}], it follows that $N^{(7)'n+1}_DN^{(7-i)'*}_D = N^{(7-i)'*}_DN^{(7)'n+1}_D$ for $1 \leqslant i \leqslant 6$ and hence from \eqref{CCUE 3} we have the following
		\begin{equation}\label{CCUE 4}
			\begin{aligned}
				U_{\tau}N^{(i)}_DN^{(7)n}_DQ_{T^*_7}h&= N^{(7-i)'*}_DN^{(7)'n+1}_DQ_{T^{'*}_7}\tau h.
			\end{aligned}
		\end{equation}
		Because $\textbf{N}^{'}$ is a $\Gamma_{E(3; 3; 1, 1, 1)}$-unitary then by [Theorem $3.2$, \cite{apal2}] we have $N^{(7-i)'*}_DN^{(7)}_D = N^{(i)'*}_D$ and hence from \eqref{CCUE 4} it is immediate that
		\begin{equation}\label{CCUE 5}
			\begin{aligned}
				U_{\tau}N^{(i)}_DN^{(7)n}_DQ_{T^*_7}h&= N^{(i)'}_DN^{(7)'n}_D\tau Q_{T^*_7}h\\
				&= N^{(i)'}_DN^{(7)'n}_DU_{\tau}Q_{T^*_7}h \,\, (\text{as $U_{\tau}|_{\overline{\Ran}Q_{T^*_7}} = \tau$})\\
				&= N^{(i)'}_DU_{\tau}N^{(7)n}_DQ_{T^*_7}h.
			\end{aligned}
		\end{equation}
		As $N^{(7)*}_D$ is the minimal unitary dilation of $V^{(7)*}_{T^*_7}$ then $\{N^{(7)n}_DQ_{T^*_7}h : n \geqslant 0, h \in \mathcal{H}\}$ is dense in $\mathcal{Q}_{T^*_7}$ then $N^{(i)}_D$ and $N^{(i)'}_D$ are unitarily equivalent and therefore, $\textbf{N}$ is unitarily equivalent to $\textbf{N}^{'}$ via the map $U_{\tau}$. This completes the proof.
	\end{proof}
	
	\subsection{Construction of $\Gamma_{E(3; 2; 1, 2)}$-Unitary from a $\Gamma_{E(3; 2; 1, 2)}$-Contraction}\label{Canonical Construction 2}
	
	Suppose that $\textbf{S} = (S_1, S_2, S_3, \tilde{S}_1, \tilde{S}_2)$ be a $\Gamma_{E(3; 2; 1, 2)}$-contraction defined on a Hilbert space $\mathcal{H}$. Then $S_3$ is a contraction. Thus, there exists a positive semi-definite operator $Q_{S^*_3}$ such that
	\begin{equation}\label{Q_S^*_3}
		\begin{aligned}
			Q^2_{S^*_3} &= SOT-\lim_{n \to \infty} S^n_3S^{*n}_3.
		\end{aligned}
	\end{equation}
	We consider an operator $W^{(3)*}_{S^*_3} : \overline{\Ran} Q_{S^*_3} \to \overline{\Ran} Q_{S^*_3}$ densely defined by
	\begin{equation}\label{W^{(3)*}_S^*_3}
		\begin{aligned}
			W^{(3)*}_{S^*_3}Q_{S^*_3}h &= Q_{S^*_3}S^*_3h.
		\end{aligned}
	\end{equation}
	By the similar argument given in \eqref{V^*_T^* isometry} we have that $W^{(3)*}_{S^*_3}$ is an isometry on $\overline{\Ran} Q_{S^*_3}$. As $\textbf{S}$ is a $\Gamma_{E(3; 2; 1, 2)}$-contraction then $S_1, \frac{S_2}{2}, \frac{\tilde{S}_1}{2}, \tilde{S}_2$ are contractions. We define $W^{(1)*}_{S^*_3}, W^{(2)*}_{S^*_3}, \tilde{W}^{(1)*}_{S^*_3}, \tilde{W}^{(2)*}_{S^*_3} : \overline{\Ran} Q_{S^*_3} \to \overline{\Ran} Q_{S^*_3}$ densely as follows:
	\begin{equation}\label{W^i*_S^*_3, tilda W^j*_S^*_3}
		\begin{aligned}
			W^{(i)*}_{S^*_3}Q_{S^*_3}h
			&= Q_{S^*_3}S^*_ih \,\, \text{and} \,\, \tilde{W}^{(j)*}_{S^*_3}Q_{S^*_3}h = Q_{S^*_3}\tilde{S}^*_jh \,\, \text{for $1 \leqslant i, j \leqslant 2$}.
		\end{aligned}
	\end{equation}
	By the commutativity of $S_1, S_2, S_2, \tilde{S}_1$ and $\tilde{S}_2$ it can be deduced that $(W^{(1)*}_{S^*_3}, W^{(2)*}_{S^*_3}, W^{(3)*}_{S^*_3}, \tilde{W}^{(1)*}_{S^*_3}, \tilde{W}^{(2)*}_{S^*_3})$ is a commuting tuple of operators.
	
	We show that $(W^{(1)*}_{S^*_3}, W^{(2)*}_{S^*_3}, W^{(3)*}_{S^*_3}, \tilde{W}^{(1)*}_{S^*_3}, \tilde{W}^{(2)*}_{S^*_3})$ is a $\Gamma_{E(3; 2; 1, 2)}$-contraction. For any polynomial $f$ in $5$ variables and for all $h \in \mathcal{H}$ we have
	\begin{equation*}
		\begin{aligned}
			 ||f(W^{(1)*}_{S^*_3}, W^{(2)*}_{S^*_3}, W^{(3)*}_{S^*_3}, \tilde{W}^{(1)*}_{S^*_3}, \tilde{W}^{(2)*}_{S^*_3})h||
			 &= ||Q_{S^*_3}f(S^*_1, S^*_2, S^*_3, \tilde{S}^*_1, \tilde{S}^*_2)h||\\
			 &\leqslant ||f(S^*_1, S^*_2, S^*_3, \tilde{S}^*_1, \tilde{S}^*_2)h||\\
			 &\leqslant ||f(S^*_1, S^*_2, S^*_3, \tilde{S}^*_1, \tilde{S}^*_2)||\,||h||\\
			 &\leqslant ||f||_{\infty, \Gamma_{E(3; 2; 1, 2)}}\,||h||.
		\end{aligned}
	\end{equation*}
	This implies that $(W^{(1)*}_{S^*_3}, W^{(2)*}_{S^*_3}, W^{(3)*}_{S^*_3}, \tilde{W}^{(1)*}_{S^*_3}, \tilde{W}^{(2)*}_{S^*_3})$ is a $\Gamma_{E(3; 2; 1, 2)}$-contraction; and as $(W^{(1)*}_{S^*_3}, W^{(2)*}_{S^*_3}, W^{(3)*}_{S^*_3},\\ \tilde{W}^{(1)*}_{S^*_3}, \tilde{W}^{(2)*}_{S^*_3})$ an isometry we conclude by [Theorem $3.7$, \cite{apal2}] that $\textbf{W}^*$ is a $\Gamma_{E(3; 2; 1, 2)}$-isometry on $\overline{\Ran} Q_{S^*_3}$. Therefore, by definition of $\Gamma_{E(3; 2; 1, 2)}$-isometry, it can be dilated to a $\Gamma_{E(3; 2; 1, 2)}$-unitary $(M^{(1)*}_D, M^{(2)*}_D, M^{(3)*}_D,\\ \tilde{M}^{(1)*}_D, \tilde{M}^{(2)*}_D)$ in a larger Hilbert space $\mathcal{Q}_{S^*_3}$ containing $\overline{\Ran} Q_{S^*_3}$, where $M^{(3)*}_D$ is operating on $\mathcal{Q}_{S^*_3}$ is a minimal unitary dilation of $W^{(3)*}_{S^*_3}$ and therefore $\textbf{M} = (M^{(1)}_D, M^{(2)}_D, M^{(3)}_D, \tilde{M}^{(1)}_D, \tilde{M}^{(2)}_D)$ is a $\Gamma_{E(3; 2; 1, 2)}$-unitary on $\mathcal{Q}_{S^*_3}$.
	
	\begin{defn}\label{Defn 4}
		Let $\textbf{S} = (S_1, S_3, S_3, \tilde{S}_1, \tilde{S}_2)$ be a $\Gamma_{E(3; 2; 1, 2)}$-contraction on a Hilbert space $\mathcal{H}$ and $\textbf{M} = (M^{(1)}_D, M^{(2)}_D, M^{(3)}_D, \tilde{M}^{(1)}_D, \tilde{M}^{(2)}_D)$ be the $\Gamma_{E(3; 2; 1, 2)}$-unitary constructed from $\textbf{S}$. We call $\textbf{M}$ the \textit{canonical $\Gamma_{E(3; 2; 1, 2)}$-unitary associated to the $\Gamma_{E(3; 2; 1, 2)}$-contraction $\textbf{S}$}.
	\end{defn}
	
	We only state the following theorem as the proof is similar to Theorem \ref{Thm 5}.
	
	\begin{thm}\label{Thm 6}
		Let $\textbf{S} = (S_1, S_3, S_3, \tilde{S}_1, \tilde{S}_2)$ on a Hilbert space $\mathcal{H}$ and $\textbf{S}^{'} = (S^{'}_1, S^{'}_3, S^{'}_3, \tilde{S}^{'}_1, \tilde{S}^{'}_2)$ on a Hilbert space $\mathcal{H}^{'}$. Let $\textbf{M} = (M^{(1)}_D, M^{(2)}_D, M^{(3)}_D, \tilde{M}^{(1)}_D, \tilde{M}^{(2)}_D)$ and $\textbf{M}^{'} = (M^{(1)'}_D, M^{(2)'}_D, M^{(3)'}_D, \tilde{M}^{(1)'}_D, \tilde{M}^{(2)'}_D)$ be the respective canonical $\Gamma_{E(3; 2; 1, 2)}$-unitaries. If $\textbf{S}$ and $\textbf{S}^{'}$ are unitarily equivalent via the map $\sigma$ then $\textbf{M}$ and $\textbf{M}^{'}$ are unitarily equivalent via the map $U_{\sigma} : \mathcal{Q}_{S^*_3} \to \mathcal{Q}_{S^{'*}_3}$ defined by
		\begin{equation}\label{U_sigma}
			\begin{aligned}
				U_{\sigma}(M^{(3)n}_DQ_{S^*_3}h)
				&= M^{(3)'n}_DQ_{S^{'*}_3}\sigma h
			\end{aligned}
		\end{equation}
		for all $n \geqslant 0$ and $h \in \mathcal{H}$.
	\end{thm}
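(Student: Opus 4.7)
The strategy is to mirror, almost line by line, the proof of Theorem \ref{Thm 5}, replacing the seven-tuple with the five-tuple $\textbf{M} = (M^{(1)}_D, M^{(2)}_D, M^{(3)}_D, \tilde{M}^{(1)}_D, \tilde{M}^{(2)}_D)$ and taking $M^{(3)}_D$ in the role of $N^{(7)}_D$ (the distinguished unitary dilation component). The argument has three natural steps: (i) push the given unitary equivalence $\sigma$ down to an intertwiner of the canonical $\Gamma_{E(3;2;1,2)}$-isometries constructed on $\overline{\Ran}\, Q_{S^*_3}$ and $\overline{\Ran}\, Q_{S^{'*}_3}$; (ii) use $M^{(3)}_D$-minimality to extend this intertwiner to the unitary $U_\sigma$ of \eqref{U_sigma} and verify intertwining of $M^{(3)}_D$ itself; (iii) bootstrap to the remaining four operators using the structural identities of a $\Gamma_{E(3;2;1,2)}$-unitary and Fuglede's theorem.

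\textbf{Step 1 — intertwining at the isometry level.} Since $\sigma$ unitarily equivalences $\textbf{S}$ and $\textbf{S}^{'}$, we have $\sigma S^*_i = S^{'*}_i \sigma$ and $\sigma \tilde{S}^*_j = \tilde{S}^{'*}_j \sigma$ for $i=1,2,3$ and $j=1,2$. Applying this to the SOT-limit defining $Q^2_{S^*_3}$ in \eqref{Q_S^*_3} and taking positive square roots gives $\sigma Q_{S^*_3} = Q_{S^{'*}_3}\sigma$. Exactly as in \eqref{tau, V^i*_T^*_7 commute}, evaluating on vectors of the form $Q_{S^*_3}h$ then yields $\sigma W^{(i)*}_{S^*_3} = W^{(i)*}_{S^{'*}_3}\sigma$ and $\sigma \tilde{W}^{(j)*}_{S^*_3} = \tilde{W}^{(j)*}_{S^{'*}_3}\sigma$ on $\overline{\Ran}\, Q_{S^*_3}$.

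\textbf{Step 2 — defining $U_\sigma$ and handling $M^{(3)}_D$.} One defines $U_\sigma$ on the dense subset $\{M^{(3)n}_D Q_{S^*_3} h : n \geqslant 0,\; h \in \mathcal{H}\}$ of $\mathcal{Q}_{S^*_3}$ by \eqref{U_sigma}. Well-definedness and isometricity follow from $M^{(3)}_D$ being a minimal unitary dilation of $W^{(3)}_{S^*_3}$, together with Step 1 applied at $i=3$ (the relevant inner product computation is identical in form to the one preceding \eqref{CCUE 1}). Density then extends $U_\sigma$ to a unitary $\mathcal{Q}_{S^*_3} \to \mathcal{Q}_{S^{'*}_3}$, and the intertwining $U_\sigma M^{(3)}_D = M^{(3)'}_D U_\sigma$ is immediate from the defining formula \eqref{U_sigma}.

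\textbf{Step 3 — intertwining the remaining four operators.} This is the analogue of \eqref{CCUE 1}--\eqref{CCUE 5} and is the main step. The key structural fact is that for any $\Gamma_{E(3;2;1,2)}$-unitary, the distinguished component is unitary, so $D_{M^{(3)}_D}=0$, and the fundamental equations \eqref{funda1}--\eqref{funda11} collapse to
\[
M^{(3)*}_D M^{(1)}_D = \tilde{M}^{(2)*}_D,\quad M^{(3)*}_D \tilde{M}^{(2)}_D = M^{(1)*}_D,\quad M^{(3)*}_D M^{(2)}_D = \tilde{M}^{(1)*}_D,\quad M^{(3)*}_D \tilde{M}^{(1)}_D = M^{(2)*}_D,
\]
with the same identities in the primed tuple. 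To intertwine $M^{(1)}_D$ with $M^{(1)'}_D$, apply $U_\sigma M^{(1)}_D$ to $M^{(3)n}_D Q_{S^*_3} h$; commute $M^{(1)}_D$ past the $M^{(3)n}_D$ factor, insert a factor of $M^{(3)}_D M^{(3)*}_D = I$, use the first identity above to rewrite $M^{(3)*}_D M^{(1)}_D = \tilde{M}^{(2)*}_D$, invoke Fuglede's theorem to commute $\tilde{M}^{(2)*}_D$ past the normal $M^{(3)n+1}_D$, restrict to $\overline{\Ran}\, Q_{S^*_3}$ where $\tilde{M}^{(2)*}_D$ coincides with $\tilde{W}^{(2)*}_{S^*_3}$ and apply Step 1, then unwind in the primed tuple using $M^{(3)'}_D \tilde{M}^{(2)'*}_D$ together with the identity $\tilde{M}^{(2)'*}_D M^{(3)'}_D = M^{(1)'}_D$ read in reverse. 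The result is $U_\sigma M^{(1)}_D = M^{(1)'}_D U_\sigma$ on the dense subset, hence everywhere. The same template, cycling through the four collapsed identities, handles $M^{(2)}_D,\,\tilde{M}^{(1)}_D,\,\tilde{M}^{(2)}_D$.

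\textbf{Main obstacle.} The delicate point — exactly as in Theorem \ref{Thm 5}, where the argument relies on the identity $N^{(7)*}_D N^{(i)}_D = N^{(7-i)*}_D$ from [Theorem 3.2, apal2] — is confirming that the four collapsed identities above are genuinely available for every $\Gamma_{E(3;2;1,2)}$-unitary. Once this structural fact is cited from the parallel result in \cite{apal2} for the $\mu_{1,3}$-quotient case, the remainder of the proof is a mechanical adaptation of \eqref{CCUE 1}--\eqref{CCUE 5}.
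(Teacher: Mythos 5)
Your proposal is correct and is exactly the argument the paper intends: the paper omits the proof of Theorem \ref{Thm 6}, stating only that it is similar to that of Theorem \ref{Thm 5}, and your three steps (intertwining $\sigma$ with $Q_{S^*_3}$ and the $W^{(i)*}_{S^*_3}$, extending to $U_\sigma$ by minimality of the unitary dilation of $W^{(3)}_{S^*_3}$, and cycling through the collapsed identities $M^{(3)*}_D M^{(1)}_D = \tilde{M}^{(2)*}_D$ etc.\ with Fuglede's theorem) reproduce the chain \eqref{tau, V^i*_T^*_7 commute}--\eqref{CCUE 5} verbatim in the five-tuple setting. The structural identities you flag as the one delicate point do hold for every $\Gamma_{E(3;2;1,2)}$-unitary, since $D_{M^{(3)}_D}=0$ collapses \eqref{funda1}--\eqref{funda11} as you describe, matching the role of $N^{(7)*}_DN^{(i)}_D=N^{(7-i)*}_D$ in Theorem \ref{Thm 5}.
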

	
	\section{Models for $\Gamma_{E(3; 3; 1, 1, 1)}$-Isometry and $\Gamma_{E(3; 2; 1, 2)}$-Isometry}\label{Model for Isometries}
	
	In this section, we develop models for $\Gamma_{E(3; 3; 1, 1, 1)}$-isometry and $\Gamma_{E(3; 2; 1, 2)}$-isometry. Let $T$ be an isometry on a Hilbert space $\mathcal{H}$, then by \textit{von Neumann-Wold decomposition} we have that there exists Hilbert spaces $\mathcal{E}, \mathcal{F}$ and a unitary
	$U : \mathcal{H} \to
	\begin{pmatrix}
		H^2(\mathcal{E})\\
		\mathcal{F}
	\end{pmatrix}$
	such that
	\begin{equation}\label{UTU*}
		\begin{aligned}
			UTU^*
			&=
			\begin{pmatrix}
				M_z & 0\\
				0 & N_D
			\end{pmatrix} :
			\begin{pmatrix}
				H^2(\mathcal{E})\\
				\mathcal{F}
			\end{pmatrix} \to
			\begin{pmatrix}
				H^2(\mathcal{E})\\
				\mathcal{F}
			\end{pmatrix}
		\end{aligned}
	\end{equation}
	where $M_z$ is the unilateral shift on $H^2(\mathcal{E})$ and $N_D$ is a unitary acting on $\mathcal{F}$. We show that a $7$-tuple (respectively, $5$-tuple) of commuting bounded operators $\textbf{T} = (T_1, \dots, T_7)$ (respectively, $\textbf{S} = (S_1, S_2, S_3, \tilde{S}_1, \tilde{S}_2)$) is a $\Gamma_{E(3; 3; 1, 1, 1)}$-isometry (respectively, $\Gamma_{E(3; 2; 1, 2)}$-isometry) if and only if it possesses von Neumann-Wold decomposition. We first see some examples of $\Gamma_{E(3; 3; 1, 1, 1)}$-isometry and $\Gamma_{E(3; 2; 1, 2)}$-isometry.
	
	\begin{exam}\label{Example 1}
		Let $\mathcal{E}$ be a Hilbert space and $H^2(\mathcal{E})$ be the Hardy space of $\mathcal{E}$-valued functions and $F_1, \dots, F_6 \in \mathcal{B}(\mathcal{E})$ that satisfy the conditions
		\begin{equation}\label{Commutative A}
			\begin{aligned}
				&[F_i, F_j] = 0 \,\, \text{and} \,\, [F^*_i, F_{7-j}] = [F^*_j, F_{7-i}]
			\end{aligned}
		\end{equation}
		$$\rm{and}$$
		\begin{equation}\label{Norm condition 1}
			\begin{aligned}
				||F^*_i + F_{7-i}z||_{\infty, \mathbb{T}} &\leqslant 1
			\end{aligned}
		\end{equation}
		for $1 \leqslant i, j \leqslant 6$. Consider the operators
		\begin{equation*}
			\begin{aligned}
				T_i = M_{F^*_i + F_{7-i}z} \,\, \text{for} \,\, 1 \leqslant i \leqslant 6 \,\, \textit{and} \,\, T_7 = M_z \,\, \text{on} \,\, H^2(\mathcal{E}).
			\end{aligned}
		\end{equation*}
		Then $T_7$ commutes with $T_i$. Again it follows from \eqref{Commutative A} that $T_iT_j = T_jT_i$ for $1 \leqslant i, j \leqslant 6$. Notice that
		\begin{equation*}
			\begin{aligned}
				T^*_{7-i}T_7
				&= M^*_{F^*_{7-i} + F_iz}M_z\\
				&= (I_{H^2} \otimes F_{7-i} + M^*_z \otimes F^*_i)(M_z \otimes I_{\mathcal{E}})\\
				&= M_z \otimes F_{7-i} + I_{H^2} \otimes F^*_i\\
				&= M_{F^*_i + F_{7-i}z}\\
				&= T_i.
			\end{aligned}
		\end{equation*}
		Thus, $T_i = T^*_{7-i}T_7$ for $1 \leqslant i \leqslant 6$. From \eqref{Norm condition 1} we have $M_{F^*_i + F_{7-i}z}$ are contractions for $1 \leqslant i \leqslant 6$. Therefore, by [Theorem $4.4$, \cite{apal2}] we have $(M_{F^*_1 + F_6z}, \dots, M_{F^*_6 + F_1z}, M_z)$ is a $\Gamma_{E(3; 3; 1, 1, 1)}$-isometry on $H^2(\mathcal{E})$. Also by [Theorem $4.6$, \cite{apal2}], any pure $\Gamma_{E(3; 3; 1, 1, 1)}$-isometry is unitarily equivalent to a $\Gamma_{E(3; 3; 1, 1, 1)}$-isometry of this form.
	\end{exam}
	
	\begin{exam}\label{Example 2}
		Let $\mathcal{E}$ and $H^2(\mathcal{E})$ are as in Example \ref{Example 1}. Take $G_1, 2G_2, 2\tilde{G}_1, \tilde{G}_2 \in \mathcal{B}(\mathcal{E})$ such that
		\begin{equation}\label{Commutative B(i)}
			\begin{aligned}
				&[G_1, \tilde{G}_i] = 0 \,\,\text{for}\,\, 1 \leqslant i \leq 2, [G_2, \tilde{G}_j]=0 \,\,\text{for}\,\, 1 \leqslant j \leqslant 2, \,\,\text{and}\,\, [G_1, G_2] = [\tilde{G}_1, \tilde{G}_2]  = 0,
			\end{aligned}
		\end{equation}
		$$\rm{and}$$
		\begin{equation}\label{Commutative B(ii)}
			\begin{aligned}
				&[G_1, G^*_1] = [\tilde{G}_2, \tilde{G}^*_2], [G_2, G^*_2] = [\tilde{G}_1, \tilde{G}^*_1], [G_1, \tilde{G}^*_1] = [G_2, \tilde{G}^*_2],\\
				&[\tilde{G}_1, G^*_1] = [\tilde{G}_2, G^*_2], [G_1, G^*_2] = [\tilde{G}_1, \tilde{G}^*_2], [G^*_1, G_2] = [\tilde{G}^*_1, \tilde{G}_2].
			\end{aligned}
		\end{equation}
		Suppose $G_1, 2G_2, 2\tilde{G}_1, \tilde{G}_2$ satisfy
		\begin{equation}\label{Norm condition 2}
			\begin{aligned}
				&||G^*_1 + \tilde{G}_2z||_{\infty, \mathbb{T}} \leqslant 1 \,\, \text{and} \,\, ||G^*_2 + \tilde{G}_1z||_{\infty, \mathbb{T}} \leqslant 1.
			\end{aligned}
		\end{equation}
		Let us consider the following operators:
		\begin{equation*}
			\begin{aligned}
				&S_1 = M_{G^*_1 + \tilde{G}_2z}, S_2 = M_{2G^*_2 + 2\tilde{G}_1z}, S_3 = M_z, \tilde{S}_1 = M_{2\tilde{G}^*_1 + 2G_2z}, \tilde{S}_2 = M_{\tilde{G}^*_2 + G_1z} \,\, \text{on} \,\, H^2(\mathcal{E}).
			\end{aligned}
		\end{equation*}
		One can easily verify that $S_1 = \tilde{S}^*_2S_3$ and $S_2 = \tilde{S}^*_1S_3$. It yields from \eqref{Norm condition 2} that $||S_1|| \leqslant 1, ||S_2|| \leqslant 2, ||\tilde{S}_1|| \leqslant 2$ and $||\tilde{S}_2|| \leqslant 1$. Therefore, by [Theorem $4.5$, \cite{apal2}], $(S_1, S_2, S_3, \tilde{S}_1, \tilde{S}_2)$ is a $\Gamma_{E(3; 2; 1, 2)}$-isometry and by [Theorem $4.7$, \cite{apal2}] we have that any pure $\Gamma_{E(3; 2; 1, 2)}$-isometry is unitarily equivalent to a $\Gamma_{E(3; 2; 1, 2)}$-isometry of this form.
	\end{exam}
	
	Proof of the following lemma is straight forward and it can be found in \cite{Foias}. We thus omit the proof.
	
	\begin{lem}\label{Lem 3}
		Let $V$ be a unitary operator on $\mathcal{H}_2$ and $T$ be an operator on $\mathcal{H}_1$ such that $T^{*n}\to 0$ in the strong operator topology as $n \to \infty$. If $X$ is a bounded operator from
		$\mathcal{H}_2$ to $\mathcal{H}_1$ such that $XV = TX$ then $X = 0$.
	\end{lem}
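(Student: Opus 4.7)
The plan is to exploit the intertwining relation $XV = TX$ through iteration followed by passage to adjoints. First, by a straightforward induction on $n$ using the hypothesis, I would establish that $XV^n = T^n X$ for every $n \geq 0$. Taking adjoints then yields
\[V^{*n}X^* = X^*T^{*n}, \quad n \geq 0.\]

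The second step combines the unitarity of $V$ with the decay of $T^{*n}$. Since $V$ is unitary, $V^{*n}$ preserves norms, so for any $h \in \mathcal{H}_1$ one has
\[\|X^*h\| = \|V^{*n}X^*h\| = \|X^*T^{*n}h\| \leq \|X^*\|\,\|T^{*n}h\|.\]
The hypothesis $T^{*n} \to 0$ in the strong operator topology forces $\|T^{*n}h\| \to 0$, and therefore $\|X^*h\| = 0$ for every $h \in \mathcal{H}_1$. Hence $X^* = 0$, and consequently $X = 0$.

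I do not anticipate any genuine obstacle: the argument reduces to a two-line calculation once the intertwining is iterated. The conceptual point is the interplay between the two hypotheses, namely that unitarity of $V$ allows insertion of $V^{*n}$ on the left without disturbing norms, while the pure-contraction-type decay of $T^{*n}$ annihilates the right-hand side. Together these pin $X^*$, and hence $X$, to zero.
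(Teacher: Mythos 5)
Your proof is correct and is exactly the standard intertwining argument: iterate $XV=TX$ to $XV^n=T^nX$, pass to adjoints, and use unitarity of $V^{*n}$ together with $T^{*n}\to 0$ strongly to conclude $X^*=0$. The paper omits its own proof (deferring to Sz.-Nagy--Foias), so there is nothing to compare against, but your two-line argument is precisely the expected one and has no gaps.
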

	
	Now we will proceed for the main results of this section.
	
	\begin{thm}[Model for $\Gamma_{E(3; 3; 1, 1, 1)}$-Isometry]\label{Wold decomposition 1}
		Let $\textbf{T} = (T_1, \dots, T_7)$ be a commuting $7$-tuple of bounded operators on a Hilbert space $\mathcal{H}$. Then $\textbf{T}$ is a $\Gamma_{E(3; 3; 1, 1, 1)}$-isometry if and only if there exists Hilbert spaces $\mathcal{E}, \mathcal{F}$ such that $\mathcal{H}$ is isomorphic to
		$\begin{pmatrix}
			H^2(\mathcal{E})\\
			\mathcal{F}
		\end{pmatrix}$ and with respect to the same unitary, $\textbf{T}$ is unitarily equivalent to
		\begin{equation}\label{Model, WD 1}
			\begin{aligned}
				\left(
				\begin{pmatrix}
					M_{F^*_1 + F_6z} & 0\\
					0 & N^{(1)}_D
				\end{pmatrix}, \dots,
				\begin{pmatrix}
					M_{F^*_6 + F_1z} & 0\\
					0 & N^{(6)}_D
				\end{pmatrix},
				\begin{pmatrix}
					M_z & 0\\
					0 & N^{(7)}_D
				\end{pmatrix}
				\right)
			\end{aligned}
		\end{equation}
		acting on
		$\begin{pmatrix}
			H^2(\mathcal{E})\\
			\mathcal{F}
		\end{pmatrix}$
		for some $F_1, \dots, F_6 \in \mathcal{B}(\mathcal{E})$ satisfying \eqref{Commutative A} and \eqref{Norm condition 1} and $(N^{(1)}_D, \dots, N^{(6)}_D, N^{(7)}_D)$ is a $\Gamma_{E(3; 3; 1, 1, 1)}$-unitary acting on $\mathcal{F}$.
	\end{thm}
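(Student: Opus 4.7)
The plan is to apply the classical von Neumann-Wold decomposition to $T_7$ and then show that the resulting splitting of $\mathcal{H}$ simultaneously reduces all of $T_1, \dots, T_6$; the pure and unitary summands will then be identified using, respectively, the model for pure $\Gamma_{E(3;3;1,1,1)}$-isometries recalled in Example \ref{Example 1} (i.e., [Theorem $4.6$, \cite{apal2}]) and the characterization of $\Gamma_{E(3;3;1,1,1)}$-unitaries from [Theorem $3.2$, \cite{apal2}].

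For the easy direction, assume $\textbf{T}$ has the form in \eqref{Model, WD 1}. By Example \ref{Example 1}, the tuple $(M_{F^*_1 + F_6 z}, \dots, M_{F^*_6 + F_1 z}, M_z)$ is a $\Gamma_{E(3;3;1,1,1)}$-isometry on $H^2(\mathcal{E})$ and so dilates to a $\Gamma_{E(3;3;1,1,1)}$-unitary on some $\mathcal{K} \supseteq H^2(\mathcal{E})$. Taking its orthogonal direct sum with the $\Gamma_{E(3;3;1,1,1)}$-unitary $(N^{(1)}_D, \dots, N^{(7)}_D)$ on $\mathcal{F}$ produces a commuting tuple of normal operators on $\mathcal{K} \oplus \mathcal{F}$ whose joint spectrum lies in $K$, that is, a $\Gamma_{E(3;3;1,1,1)}$-unitary extension of $\textbf{T}$. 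Hence $\textbf{T}$ is a $\Gamma_{E(3;3;1,1,1)}$-isometry.

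For the nontrivial direction, $T_7$ is an isometry by Definition \ref{def-1}(5), so the Wold decomposition gives $\mathcal{H} = \mathcal{H}_u \oplus \mathcal{H}_s$ where $\mathcal{H}_s = \bigcap_{n \geq 0} T_7^n \mathcal{H}$ carries $T_7$ as a unitary and $\mathcal{H}_u$ is unitarily isomorphic to $H^2(\mathcal{E})$ with $\mathcal{E} = \mathcal{H} \ominus T_7 \mathcal{H}$ and $T_7|_{\mathcal{H}_u} \simeq M_z$. The crucial step is to show that both summands reduce every $T_i$. Invariance of $\mathcal{H}_s$ under $T_i$ follows from commutativity: if $h = T_7^n h_n \in T_7^n \mathcal{H}$ then $T_i h = T_7^n(T_i h_n) \in T_7^n \mathcal{H}$ for every $n$, so $T_i h \in \mathcal{H}_s$. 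For $\mathcal{H}_u = \{h : T_7^{*n} h \to 0\}$, the characterization [Theorem $4.4$, \cite{apal2}] of $\Gamma_{E(3;3;1,1,1)}$-isometries supplies the fundamental relation $T_i = T^*_{7-i} T_7$; combined with $T_7^* T_7 = I$ this yields $T_7^{*n} T_i h = T^*_{7-i} T_7^{*(n-1)} h \to 0$, so $T_i h \in \mathcal{H}_u$.

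With the reduction established, $(T_1|_{\mathcal{H}_u}, \dots, T_7|_{\mathcal{H}_u})$ is a pure $\Gamma_{E(3;3;1,1,1)}$-isometry and by [Theorem $4.6$, \cite{apal2}] is unitarily equivalent to $(M_{F^*_1 + F_6 z}, \dots, M_{F^*_6 + F_1 z}, M_z)$ on $H^2(\mathcal{E})$ for suitable $F_1, \dots, F_6 \in \mathcal{B}(\mathcal{E})$ satisfying \eqref{Commutative A} and \eqref{Norm condition 1}. On $\mathcal{H}_s$, the tuple $(T_1|_{\mathcal{H}_s}, \dots, T_7|_{\mathcal{H}_s})$ is a $\Gamma_{E(3;3;1,1,1)}$-isometry whose last coordinate is unitary; a direct computation using $T_i = T^*_{7-i} T_7$ together with the commutation of $T_7^*$ with the remaining $T_i$'s on $\mathcal{H}_s$ (valid since $T_7|_{\mathcal{H}_s}$ is unitary) shows each $T_i|_{\mathcal{H}_s}$ is normal, and the spectral description in [Theorem $3.2$, \cite{apal2}] then identifies this summand as a $\Gamma_{E(3;3;1,1,1)}$-unitary $(N^{(1)}_D, \dots, N^{(7)}_D)$ on $\mathcal{F} := \mathcal{H}_s$, completing \eqref{Model, WD 1}. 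The main obstacle is the reducing-subspace step: commutativity of $T_i$ with $T_7$ alone is not enough, and it is the fundamental identity $T_i = T^*_{7-i} T_7$ that forces the Wold decomposition of $T_7$ to reduce all seven operators simultaneously.
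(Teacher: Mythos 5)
Your proof is correct and follows the same overall architecture as the paper's: Wold decomposition of $T_7$, a proof that this decomposition reduces each $T_i$, and identification of the two summands via [Theorem $4.6$, \cite{apal2}] and [Theorem $3.2$, \cite{apal2}]. The one real difference is in how the reducing-subspace step is executed. The paper writes $UT_iU^*$ as a $2\times 2$ block matrix, kills one off-diagonal corner with the intertwining Lemma \ref{Lem 3} and the other with the identity $T_i = T^*_{7-i}T_7$ from [Theorem $4.4$, \cite{apal2}]; you instead verify directly that $\bigcap_{n}T_7^n\mathcal{H}$ is $T_i$-invariant by commutativity alone and that $\{h : T_7^{*n}h \to 0\}$ is $T_i$-invariant via the estimate $T_7^{*n}T_ih = T^*_{7-i}T_7^{*(n-1)}h \to 0$. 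The two arguments carry the same content (your estimate is exactly what makes the corresponding block entry vanish), but yours is more elementary and bypasses Lemma \ref{Lem 3} entirely. You also cite [Theorem $4.6$, \cite{apal2}] to identify the pure summand, whereas the paper re-derives the linear-pencil form $F_i^* + F_{7-i}z$ of the commutant symbols by comparing Fourier coefficients; since that theorem is precisely the classification of pure $\Gamma_{E(3; 3; 1, 1, 1)}$-isometries, this is a legitimate shortcut. One caveat: your claim that a ``direct computation'' shows each $T_i|_{\mathcal{H}_s}$ is normal does not go through as stated --- from $T_i = T^*_{7-i}T_7$ with $T_7$ unitary one gets $T_iT_i^* = T^*_{7-i}T_{7-i} = T_7^*T_iT_i^*T_7$ and $T_i^*T_i = T_7^*T_i^*T_iT_7$, so the normality of $T_i$ reduces to itself. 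That step is unnecessary, however: the restriction to $\mathcal{H}_s$ is a $\Gamma_{E(3; 3; 1, 1, 1)}$-contraction whose seventh component is unitary, and that is exactly the hypothesis under which [Theorem $3.2$, \cite{apal2}] --- as the paper itself invokes it --- yields a $\Gamma_{E(3; 3; 1, 1, 1)}$-unitary. With that sentence removed or replaced by the direct appeal to [Theorem $3.2$, \cite{apal2}], your argument is complete.
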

	
	\begin{proof}
		The sufficiency part is immediate from Example \ref{Example 1}.
		
		In order to prove the other direction, let $\textbf{T}$ be a $\Gamma_{E(3; 3; 1, 1, 1)}$-isometry. Thus by [Theorem $4.4$, \cite{apal2}], $T_7$ is an isometry. Then by von Neumann-Wold decomposition of $T_7$, there exists Hilbert spaces $\mathcal{E}, \mathcal{F}$ and a unitary $U : \mathcal{H} \to
		\begin{pmatrix}
			H^2(\mathcal{E})\\
			\mathcal{F}
		\end{pmatrix}$
		such that
		\begin{equation*}
			\begin{aligned}
				UT_7U^*
				&=
				\begin{pmatrix}
					M_z & 0\\
					0 & N^{(7)}_D
				\end{pmatrix} :
				\begin{pmatrix}
					H^2(\mathcal{E})\\
					\mathcal{F}
				\end{pmatrix} \to
				\begin{pmatrix}
					H^2(\mathcal{E})\\
					\mathcal{F}
				\end{pmatrix}
			\end{aligned}
		\end{equation*}
		for some unitary $N^{(7)}_D$ acting on $\mathcal{F}$. Assume that
		\begin{equation*}
			\begin{aligned}
				UT_iU^*
				&=
				\begin{pmatrix}
					A^{(i)}_{11} & A^{(i)}_{12}\\
					A^{(i)}_{21} & A^{(i)}_{22}
				\end{pmatrix} :
				\begin{pmatrix}
					H^2(\mathcal{E})\\
					\mathcal{F}
				\end{pmatrix} \to
				\begin{pmatrix}
					H^2(\mathcal{E})\\
					\mathcal{F}
				\end{pmatrix} \,\, \text{for $1 \leqslant i \leqslant 6$}.
			\end{aligned}
		\end{equation*}
		Since $T_i$ commutes with $T_7$ then $UT_iU^*$ commutes with $UT_7U^*$ as well. Thus, we have the following:
		\begin{equation}\label{M 1.1}
			\begin{aligned}
				&A^{(i)}_{11}M_z = M_zA^{(i)}_{11}, A^{(i)}_{12}N^{(7)}_D = M_zA^{(i)}_{12}, A^{(i)}_{21}M_z = N^{(7)}_DA^{(i)}_{21}, A^{(i)}_{2}N^{(7)}_D = N^{(7)}_DA^{(i)}_{22} \,\, \text{for $1 \leqslant i \leqslant 6$}.
			\end{aligned}
		\end{equation}
		It now follows from Lemma \ref{Lem 3} that $A^{(i)}_{21} = 0$ for $1 \leqslant i \leqslant 6$. As $\textbf{T}$ is a $\Gamma_{E(3; 3; 1, 1, 1)}$-isometry then by [Theorem $4.4$, \cite{apal2}] we have $T_i = T^*_{7-i}T_7$ for $1 \leqslant i \leqslant 6$. Observe that
		\begin{equation}\label{M 1.2}
			\begin{aligned}
				\begin{pmatrix}
					A^{(i)}_{11} & A^{(i)}_{12}\\
					0 & A^{(i)}_{22}
				\end{pmatrix}
				= T_i
				&= T^*_{7-i}T_7\\
				&=
				\begin{pmatrix}
					A^{(7-i)*}_{11} & 0\\
					A^{(7-i)*}_{12} & A^{(7-i)*}_{22}
				\end{pmatrix}
				\begin{pmatrix}
					M_z & 0\\
					0 & N_D
				\end{pmatrix}\\
				&=
				\begin{pmatrix}
					A^{(7-i)*}_{11}M_z & 0\\
					A^{(7-i)*}_{12}M_z & A^{(7-i)*}_{22}N_D
				\end{pmatrix}.
			\end{aligned}
		\end{equation}
		From \eqref{M 1.2} we see that $A^{(i)}_{12} = 0$ for $1 \leqslant i \leqslant 6$. Since $A^{(i)}_{11}$ commutes with $M_z$ then there exists $\Phi_i \in H^{\infty}(\mathcal{E})$ such that $X_i = M_{\Phi_i}$. This implies that $M_{\Phi_i} = M^*_{\Phi_{7-i}}M_z$ and hence we obtain from here that $\Phi_i(z) = z\Phi^*_{7-i}(z)$ for all $1 \leqslant i \leqslant 6$ and $z \in \mathbb{T}$. Let the power series expansion of $\Psi_i$ be
		\begin{equation*}
			\begin{aligned}
				\Phi_i(z) &= \sum_{n \geqslant 0} C^{(i)}_nz^n \,\, \text{for} \,\, 1 \leqslant i \leqslant 6 \,\, \text{and} \,\, z \in \mathbb{T}.
			\end{aligned}
		\end{equation*}
		Then from $\Phi_i(z) = z\Phi^*_{7-i}(z)$ we obtain the following:
		\begin{equation}\label{M 1.3}
			\begin{aligned}
				C^{(i)}_0 + C^{(i)}_1z + \sum_{n \geqslant 2} C^{(i)}_nz^n &= z\sum_{n \geqslant 0}C^{(7-i)*}_n\overline{z}^n = C^{(7-i)*}_0z + C^{(7-i)*}_1 + \sum_{n \geqslant 2} C^{(7-i)*}_n\overline{z}^{n-1} 
			\end{aligned}
		\end{equation}
		for all $z \in \mathbb{T}$. Thus comparing the coefficients of $z^n, \overline{z}^n$ for $n \geqslant 2$ and the constant terms we obtain
		\begin{equation}\label{M 1.4}
			\begin{aligned}
				C^{(i)}_0 = C^{(7-i)}_1, C^{(i)}_1 = C^{(7-i)}_0.
			\end{aligned}
		\end{equation}
		It follows from here that $\Phi_i$'s are of the form
		\begin{equation}\label{Phi_i}
			\begin{aligned}
				\Phi_i(z) &= F^*_i + F_{7-i}z
			\end{aligned}
		\end{equation}
		for some $F_i \in \mathcal{B}(\mathcal{D}_{T_7})$ for $1 \leqslant i \leqslant 6$ and $z \in \mathbb{T}$.
		
		Therefore, it follows from \eqref{Phi_i} that $UT_iU^*$ are of the form
		\begin{equation*}
			\begin{aligned}
				UT_iU^*
				&=
				\begin{pmatrix}
					M_{F^*_i + F_{7-i}z} & 0\\
					0 & N^{(i)}_D
				\end{pmatrix} \,\, \text{for $1 \leqslant i \leqslant 6$}
			\end{aligned}
		\end{equation*}
		Hence, by [Theorem $4.6$, \cite{apal2}], $(M_{F^*_1 + F_6z}, \dots, M_{F^*_6 + F_1z}, M_z)$ is a pure $\Gamma_{E(3; 3; 1, 1, 1)}$-isometry. On the other hand, $(N^{(1)}_D, \dots, N^{(6)}_D, N^{(7)}_D)$ is a $\Gamma_{E(3; 3; 1, 1, 1)}$-contraction with $N^{(7)}_D$ unitary on $\mathcal{F}$. Therefore, by [Theorem $3.2$, \cite{apal2}], $(N^{(1)}_D, \dots, N^{(6)}_D, N^{(7)}_D)$ is a $\Gamma_{E(3; 3; 1, 1, 1)}$-unitary acting on $\mathcal{F}$. This completes the proof.
	\end{proof}
	
	The following result is analogous for $\Gamma_{E(3; 2; 1, 2)}$-isometry. We only state the theorem as the proof is similar to that of Theorem \ref{Wold decomposition 1}.
	
	\begin{thm}[Model for $\Gamma_{E(3; 2; 1, 2)}$-Isometry]\label{Wold decomposition 2}
		Let $\textbf{S} = (S_1, S_2, S_3, \tilde{S}_1, \tilde{S}_2)$ be a commuting $5$-tuple of bounded operators on a Hilbert space $\mathcal{H}$. Then $\textbf{S}$ is a $\Gamma_{E(3; 2; 1, 2)}$-isometry if and only if there exists Hilbert spaces $\mathcal{E}, \mathcal{F}$ such that $\mathcal{H}$ is isomorphic to
		$\begin{pmatrix}
			H^2(\mathcal{E})\\
			\mathcal{F}
		\end{pmatrix}$ and with respect to the same unitary, $\textbf{S}$ is unitarily equivalent to
		\begin{equation}\label{Model, WD 2}
			\begin{aligned}
				&\left(
				\begin{pmatrix}
					M_{G^*_1 + \tilde{G}_2z} & 0\\
					0 & M^{(1)}_D
				\end{pmatrix},
				\begin{pmatrix}
					M_{2G^*_2 + 2\tilde{G}_1z} & 0\\
					0 & M^{(2)}_D
				\end{pmatrix},
				\begin{pmatrix}
					M_z & 0\\
					0 & M^{(3)}_D
				\end{pmatrix}, \right.\\
				&\left. \hspace{2.5cm}
				\begin{pmatrix}
					M_{2\tilde{G}^*_1 +2 G_2z} & 0\\
					0 & \tilde{M}^{(1)}_D
				\end{pmatrix}
				\begin{pmatrix}
					M_{\tilde{G}^*_2 + G_1z} & 0\\
					0 & \tilde{M}^{(2)}_D
				\end{pmatrix}
				\right)
			\end{aligned}
		\end{equation}
		acting on
		$\begin{pmatrix}
			H^2(\mathcal{E})\\
			\mathcal{F}
		\end{pmatrix}$
		for some bounded operators $G_1, 2G_2, 2\tilde{G}_1, \tilde{G}_2$ defined on $\mathcal{E}$ satisfying \eqref{Commutative B(i)}, \eqref{Commutative B(ii)}, \eqref{Norm condition 2} and $(M^{(1)}_D, M^{(2)}_D, M^{(3)}_D, \tilde{M}^{(1)}_D, \tilde{M}^{(2)}_D)$ is a $\Gamma_{E(3; 2; 1, 2)}$-unitary acting on $\mathcal{F}$.
	\end{thm}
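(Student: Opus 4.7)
The plan is to mirror the proof of Theorem~\ref{Wold decomposition 1} almost verbatim, replacing the role of $T_7$ by $S_3$ and using the additional identities that characterize a $\Gamma_{E(3;2;1,2)}$-isometry. The sufficiency direction is immediate from Example~\ref{Example 2}, so I would concentrate on the necessity direction.

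First I would observe that since $\textbf{S}$ is a $\Gamma_{E(3;2;1,2)}$-isometry, $S_3$ is an isometry and the identities $S_1=\tilde S_2^*S_3$ and $S_2=\tilde S_1^*S_3$ hold (both facts come from [Theorem~4.5, \cite{apal2}]). Apply the von Neumann–Wold decomposition to $S_3$: there exist Hilbert spaces $\mathcal E,\mathcal F$ and a unitary $U:\mathcal H\to\begin{pmatrix}H^2(\mathcal E)\\\mathcal F\end{pmatrix}$ conjugating $S_3$ to $\begin{pmatrix}M_z & 0\\0 & M_D^{(3)}\end{pmatrix}$ with $M_D^{(3)}$ unitary on $\mathcal F$. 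Write each of $S_1,S_2,\tilde S_1,\tilde S_2$ as a $2\times 2$ block matrix with respect to this decomposition. Commutativity with $S_3$, together with the fact that $M_z^{*n}\to 0$ strongly while $M_D^{(3)}$ is unitary, forces the lower-left entry to vanish in each case by Lemma~\ref{Lem 3}. Using $S_1=\tilde S_2^*S_3$, $S_2=\tilde S_1^*S_3$, $\tilde S_2=S_1^*S_3$, $\tilde S_1=S_2^*S_3$ (this last pair again from [Theorem~4.5, \cite{apal2}] applied to show $\tilde S_j^*=S_{3-j}^*S_3$-type relations), and comparing block entries as in \eqref{M 1.2}, the upper-right blocks are also forced to vanish.

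The diagonal pieces on $H^2(\mathcal E)$ commute with $M_z$, hence are analytic Toeplitz operators $M_{\Phi_{S_1}}, M_{\Phi_{S_2}}, M_{\Phi_{\tilde S_1}}, M_{\Phi_{\tilde S_2}}$ with symbols in $H^\infty(\mathcal B(\mathcal E))$. Plugging into $S_1=\tilde S_2^*S_3$ gives $\Phi_{S_1}(z)=z\,\Phi_{\tilde S_2}(z)^*$ on $\mathbb T$, and comparing Fourier coefficients (as in \eqref{M 1.3}–\eqref{M 1.4}) forces $\Phi_{\tilde S_2}$ to have the form $\tilde G_2^*+G_1z$ with $\Phi_{S_1}(z)=G_1^*+\tilde G_2 z$ for some $G_1,\tilde G_2\in\mathcal B(\mathcal E)$. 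The same argument applied to $S_2=\tilde S_1^*S_3$ (dividing by $2$) produces $G_2,\tilde G_1$ with $\Phi_{S_2}(z)=2G_2^*+2\tilde G_1 z$ and $\Phi_{\tilde S_1}(z)=2\tilde G_1^*+2G_2 z$. The commutativity relations \eqref{Commutative B(i)}, \eqref{Commutative B(ii)} are then read off from the commutativity of the $S_i$'s and $\tilde S_j$'s on $H^2(\mathcal E)$, by equating coefficients of $1,z,\bar z,z\bar z$ in the corresponding symbol identities. The norm bounds \eqref{Norm condition 2} follow from $\|S_1\|\le 1$, $\|\tfrac12 S_2\|\le 1$, etc., which are consequences of $\textbf{S}$ being a $\Gamma_{E(3;2;1,2)}$-contraction.

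Finally, the lower-right block $(M_D^{(1)},M_D^{(2)},M_D^{(3)},\tilde M_D^{(1)},\tilde M_D^{(2)})$ on $\mathcal F$ is a $\Gamma_{E(3;2;1,2)}$-contraction whose last component $M_D^{(3)}$ is unitary; invoking [Theorem~3.2, \cite{apal2}] (the $\Gamma_{E(3;2;1,2)}$ analogue used after \eqref{Ad 1.5} for the unitary criterion) yields that it is a $\Gamma_{E(3;2;1,2)}$-unitary. The diagonal block on $H^2(\mathcal E)$ is a pure $\Gamma_{E(3;2;1,2)}$-isometry by [Theorem~4.7, \cite{apal2}]. I expect the main obstacle to be the bookkeeping in the coefficient-comparison step: four symbols interact through four identities together with the six relations \eqref{Commutative B(ii)}, and one must verify that the combinatorics of matching Fourier coefficients actually yields exactly those six relations and no stronger constraint—this mimics, but is noticeably more delicate than, the single chain of identities in the $\Gamma_{E(3;3;1,1,1)}$ case.
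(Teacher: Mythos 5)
Your proposal is correct and follows exactly the route the paper intends: the paper omits the proof of Theorem~\ref{Wold decomposition 2}, stating only that it is similar to that of Theorem~\ref{Wold decomposition 1}, and your argument is precisely that adaptation (Wold decomposition of $S_3$, Lemma~\ref{Lem 3} to kill the lower-left blocks, the degenerate fundamental equations $S_1=\tilde S_2^*S_3$, $S_2=\tilde S_1^*S_3$, $\tilde S_2=S_1^*S_3$, $\tilde S_1=S_2^*S_3$ to kill the upper-right blocks and pin down the Toeplitz symbols, and [Theorem~3.2, \cite{apal2}] for the unitary part). No gaps.
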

	
	In one variable case, any isometry can always be extended to a unitary. The following results present an analogous version for $\Gamma_{E(3; 3; 1, 1, 1)}$-isometry (respectively, $\Gamma_{E(3; 2; 1, 2)}$-isometry). Those results indicate that any $\Gamma_{E(3; 3; 1, 1, 1)}$-isometry (respectively, $\Gamma_{E(3; 2; 1, 2)}$-isometry) can be extended to a $\Gamma_{E(3; 3; 1, 1, 1)}$-unitary (respectively, $\Gamma_{E(3; 2; 1, 2)}$-unitary).
	
	\begin{cor}\label{Cor 1}
		Let $\textbf{T} = (T_1, \dots, T_7)$ be a commuting $7$-tuple of bounded operators on a Hilbert space $\mathcal{H}$. Then $\textbf{T}$ is a $\Gamma_{E(3; 3; 1, 1, 1)}$-isometry if and only if it can be extended to a $\Gamma_{E(3; 3; 1, 1, 1)}$-unitary acting on a Hilbert space of minimal unitary dilation of the isometry $T_7$.
	\end{cor}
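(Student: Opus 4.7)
The backward implication is immediate from the definition of a $\Gamma_{E(3; 3; 1, 1, 1)}$-isometry: such a tuple is, by Definition \ref{def-1}, the restriction of a $\Gamma_{E(3; 3; 1, 1, 1)}$-unitary to a joint invariant subspace, so any extension to a $\Gamma_{E(3; 3; 1, 1, 1)}$-unitary automatically makes $\textbf{T}$ a $\Gamma_{E(3; 3; 1, 1, 1)}$-isometry.

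For the forward direction, the plan is to invoke Theorem \ref{Wold decomposition 1} to identify $\mathcal{H}$ with $H^2(\mathcal{E}) \oplus \mathcal{F}$ and write $\textbf{T}$, up to unitary equivalence, as the block-diagonal tuple $\bigl(M_{F_1^*+F_6 z},\dots,M_{F_6^*+F_1 z},M_z\bigr) \oplus \bigl(N_D^{(1)},\dots,N_D^{(7)}\bigr)$, where the operators $F_1,\dots,F_6$ satisfy \eqref{Commutative A} and \eqref{Norm condition 1}, and the second summand is already a $\Gamma_{E(3; 3; 1, 1, 1)}$-unitary on $\mathcal{F}$. Since $T_7$ is then unitarily equivalent to $M_z \oplus N_D^{(7)}$, its minimal unitary dilation lives on $L^2(\mathcal{E}) \oplus \mathcal{F}$ and is given by the bilateral shift direct-summed with $N_D^{(7)}$.

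The main step is to extend the pure block from $H^2(\mathcal{E})$ to $L^2(\mathcal{E})$ by replacing each Toeplitz operator with its Laurent analogue $\tilde T_i := M_{F_i^* + F_{7-i} z}^{L^2}$, so that $\tilde T_7$ becomes the bilateral shift. I then need to verify: \emph{(i)} each $\tilde T_i$ is normal, which reduces to pointwise normality of the symbol $F_i^* + F_{7-i} z$ on $\mathbb{T}$; expanding, this is equivalent to $[F_i^*,F_i] = [F_{7-i}^*,F_{7-i}]$, which is precisely the specialization of the relation $[F_i^*,F_{7-j}] = [F_j^*,F_{7-i}]$ in \eqref{Commutative A} to $j = 7-i$; \emph{(ii)} the tuple $(\tilde T_1,\dots,\tilde T_7)$ commutes, which is immediate from $[F_i,F_j]=0$ in \eqref{Commutative A} together with $M_z$ commuting with every symbol; \emph{(iii)} the operator identities $\tilde T_i = \tilde T_{7-i}^*\,\tilde T_7$ for $1 \leqslant i \leqslant 6$, a direct calculation with the Laurent symbols, which in turn force each point in the Taylor joint spectrum to satisfy $x_i = \overline{x_{7-i}}\,x_7$; finally $|x_7|=1$ since $\tilde T_7$ is unitary. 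The containment in $\Gamma_{E(3;3;1,1,1)}$ itself is inherited via the fact that the pure $\Gamma_{E(3;3;1,1,1)}$-isometry on $H^2(\mathcal{E})$ is a $\Gamma_{E(3;3;1,1,1)}$-contraction and the Laurent extension is its normal dilation, so spectral mapping places the joint spectrum inside $\Gamma_{E(3;3;1,1,1)}$. Combining, the joint spectrum sits in $K$, so $(\tilde T_1,\dots,\tilde T_7)$ is a $\Gamma_{E(3;3;1,1,1)}$-unitary on $L^2(\mathcal{E})$.

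Assembling, $\bigl(\tilde T_1 \oplus N_D^{(1)},\dots,\tilde T_7 \oplus N_D^{(7)}\bigr)$ on $L^2(\mathcal{E}) \oplus \mathcal{F}$ is the desired $\Gamma_{E(3;3;1,1,1)}$-unitary extension, and its last coordinate is the minimal unitary dilation of $T_7$. The principal obstacle I anticipate is verifying step \emph{(iii)}: not the operator identities themselves, which drop out of the symbolic calculus, but rather cleanly concluding joint-spectrum containment in the \emph{distinguished boundary} subset $K$ as opposed to all of $\Gamma_{E(3;3;1,1,1)}$. The resolution is that the algebraic relations $\tilde T_i = \tilde T_{7-i}^* \tilde T_7$ at the operator level, for a commuting normal tuple, transfer to the joint spectrum via the polynomial functional calculus, pinning it inside the set where these algebraic constraints and $|x_7|=1$ simultaneously hold.
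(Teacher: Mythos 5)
Your construction coincides with the paper's: both directions begin identically, and for the forward implication both pass through Theorem \ref{Wold decomposition 1} and then extend the pure summand from $H^2(\mathcal{E})$ to $L^2(\mathcal{E})$ by replacing each Toeplitz operator $M_{F_i^*+F_{7-i}z}$ with its Laurent analogue, carrying the $\Gamma_{E(3;3;1,1,1)}$-unitary summand on $\mathcal{F}$ along unchanged. Your verifications of commutativity, of pointwise normality of the symbols via $[F_i^*,F_i]=[F_{7-i}^*,F_{7-i}]$ (the case $j=7-i$ of \eqref{Commutative A}), and of the identities $\tilde T_i=\tilde T_{7-i}^{\,*}\tilde T_7$ are all correct, as is the transfer of these algebraic relations and of $|x_7|=1$ to the Taylor joint spectrum of a commuting normal tuple.

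The gap is in the step where you place the joint spectrum inside $\Gamma_{E(3;3;1,1,1)}$ itself. You argue that because the Laurent tuple is a normal dilation (in fact an extension) of the pure $\Gamma_{E(3;3;1,1,1)}$-isometry on $H^2(\mathcal{E})$, which is a $\Gamma_{E(3;3;1,1,1)}$-contraction, ``spectral mapping'' forces the joint spectrum of the extension into $\Gamma_{E(3;3;1,1,1)}$. This inference runs in the wrong direction: a dilation or extension inherits von Neumann inequalities \emph{downward} to the compressed or restricted tuple, not upward, and an extension of a tuple having $\Omega$ as a spectral set need not have joint spectrum in $\Omega$ (already for a single operator, an extension of the zero operator on an invariant line can have nonzero spectrum). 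Indeed, producing a normal dilation whose spectrum lies in the domain is precisely the content of the rational dilation problem, not a formal consequence of being a dilation. The paper closes exactly this point by invoking the characterization of $\Gamma_{E(3;3;1,1,1)}$-isometries [Theorem $4.4$, \cite{apal2}]: having checked commutativity, that the Laurent extension is norm-preserving (so each $M_{F_i^*+F_{7-i}\omega}$ remains a contraction), and that $M_{F_i^*+F_{7-i}\omega}=M^*_{F_{7-i}^*+F_i\omega}M_\omega$ with $M_\omega\oplus N^{(7)}_D$ unitary, that theorem yields directly that the extended tuple is the required $\Gamma_{E(3;3;1,1,1)}$-unitary. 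Substituting that citation for your spectral-containment argument repairs the proof; the rest of what you wrote stands.
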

	
	\begin{proof}
		Let $\textbf{T}$ can be extended to a $\Gamma_{E(3; 3; 1, 1, 1)}$-unitary acting on Hilbert space of minimal unitary dilation of the isometry $T_7$. Then it is clear that $\textbf{T}$ is the restriction of a $\Gamma_{E(3; 3; 1, 1, 1)}$-unitary to a joint invariant subspace. Therefore $\textbf{T}$ is a $\Gamma_{E(3; 3; 1, 1, 1)}$-isometry.
		
		Conversely, let $\textbf{T}$ is a $\Gamma_{E(3; 3; 1, 1, 1)}$-isometry. Without loss of generality, let us consider $\textbf{T}$ be
		\begin{equation*}
			\begin{aligned}
				\left(
				\begin{pmatrix}
					M_{F^*_1 + F_6z} & 0\\
					0 & N^{(1)}_D
				\end{pmatrix}, \dots,
				\begin{pmatrix}
					M_{F^*_6 + F_1z} & 0\\
					0 & N^{(6)}_D
				\end{pmatrix},
				\begin{pmatrix}
					M_z & 0\\
					0 & N^{(7)}_D
				\end{pmatrix}
				\right)
			\end{aligned}
		\end{equation*}
		acting on
		$
		\begin{pmatrix}
			H^2(\mathcal{E})\\
			\mathcal{F}
		\end{pmatrix}$
		for some $F_1, \dots, F_6 \in \mathcal{B}(\mathcal{E})$ satisfying \eqref{Commutative A} and \eqref{Norm condition 1} and $(N^{(1)}_D, \dots, N^{(6)}_D, N^{(7)}_D)$ is a $\Gamma_{E(3; 3; 1, 1, 1)}$-unitary acting on $\mathcal{F}$.
		We consider $H^2(\mathcal{E})$ as a closed subspace of $L^2(\mathcal{E})$. Then $7$-tuple of operators
		\begin{equation}\label{C 1.1}
			\begin{aligned}
				\left(
				\begin{pmatrix}
					M_{F^*_1 + F_6\omega} & 0\\
					0 & N^{(1)}_D
				\end{pmatrix}, \dots,
				\begin{pmatrix}
					M_{F^*_6 + F_1\omega} & 0\\
					0 & N^{(6)}_D
				\end{pmatrix},
				\begin{pmatrix}
					M_\omega & 0\\
					0 & N^{(7)}_D
				\end{pmatrix}
				\right) :
				\begin{pmatrix}
					L^2(\mathcal{E})\\
					\mathcal{F}
				\end{pmatrix} \to
				\begin{pmatrix}
					L^2(\mathcal{E})\\
					\mathcal{F}
				\end{pmatrix}.
			\end{aligned}
		\end{equation}
		is an extension of $\textbf{T}$. Observe that
		$
		\begin{pmatrix}
			M_\omega & 0\\
			0 & N^{(7)}_D
		\end{pmatrix}$
		is a minimal unitary dilation of
		$
		\begin{pmatrix}
			M_z & 0\\
			0 & N^{(7)}_D
		\end{pmatrix}$.
		Also note that since $(M_{F^*_1 + F_6z}, \dots, M_{F^*_6 + F_1z}, M_z)$ is commutative then
		\begin{equation}\label{C 1.2}
			\begin{aligned}
				M_{F^*_i + F_{7-i}z}M_{F^*_j + F_{7-j}z} &=
				M_{F^*_j + F_{7-j}z}M_{F^*_i + F_{7-i}z}
			\end{aligned}
		\end{equation}
		for all $z \in \mathbb{T}$ and $1 \leqslant i, j \leqslant 6$. By \eqref{C 1.2} we get
		\begin{equation}\label{C 1.3}
			\begin{aligned}
				(F^*_i + F_{7-i}z)(F^*_j + F_{7-j}z) &=
				(F^*_j + F_{7-j}z)(F^*_i + F_{7-i}z)
			\end{aligned}
		\end{equation}
		for all $z \in \mathbb{T}$ and $1 \leqslant i, j \leqslant 6$. It is immediate from \eqref{C 1.3} that
		\begin{equation}\label{C 1.4}
			\begin{aligned}
				(F^*_i + F_{7-i}\omega)(F^*_j + F_{7-j}\omega) &=
				(F^*_j + F_{7-j}\omega)(F^*_i + F_{7-i}\omega)
			\end{aligned}
		\end{equation}
		for all $\omega \in \mathbb{T}$ and $1 \leqslant i, j \leqslant 6$. Thus it follows from \eqref{C 1.4} that
		\begin{equation*}
			\begin{aligned}
				\left(
				\begin{pmatrix}
					M_{F^*_1 + F_6\omega} & 0\\
					0 & N^{(1)}_D
				\end{pmatrix}, \dots,
				\begin{pmatrix}
					M_{F^*_6 + F_1\omega} & 0\\
					0 & N^{(6)}_D
				\end{pmatrix},
				\begin{pmatrix}
					M_\omega & 0\\
					0 & N^{(7)}_D
				\end{pmatrix}
				\right)
			\end{aligned}
		\end{equation*}
		is commutative. Furthermore, the extensions $M_{F^*_i + F_{7-i}\omega}$ acting on $L^2(\mathcal{E})$ of the operators $M_{F^*_i + F_{7-i}z}$ on $H^2(\mathcal{E})$ is norm-preserving. Thus whenever the operator norm of $M_{F^*_i + F_{7-i}z}$ does not exceed one, then the operator norm of $M_{F^*_i + F_{7-i}\omega}$ does not exceed one. On the other hand, it can also be deduced that
		\begin{equation*}
			\begin{aligned}
				M_{F^*_i + F_{7-i}\omega}
				&= M^*_{F^*_{7-i} + F_i\omega}M_{\omega}
			\end{aligned}
		\end{equation*}
		for all $\omega \in \mathbb{T}$ and $1 \leqslant i \leqslant 6$. Therefore, by [Theorem $4.4$, \cite{apal2}],
		\begin{equation}
			\begin{aligned}
				\left(
				\begin{pmatrix}
					M_{F^*_1 + F_6\omega} & 0\\
					0 & N^{(1)}_D
				\end{pmatrix}, \dots,
				\begin{pmatrix}
					M_{F^*_6 + F_1\omega} & 0\\
					0 & N^{(6)}_D
				\end{pmatrix},
				\begin{pmatrix}
					M_\omega & 0\\
					0 & N^{(7)}_D
				\end{pmatrix}
				\right)
			\end{aligned}
		\end{equation}
		is the required
		$\Gamma_{E(3; 3; 1, 1, 1)}$-unitary on
		$
		\begin{pmatrix}
			L^2(\mathcal{E})\\
			\mathcal{F}
		\end{pmatrix}$. This completes the proof.
	\end{proof}
	
	We only state the following result, as its proof is exactly similar to that of Corollary \ref{Cor 1}.
	
	\begin{cor}\label{Cor 2}
		Let $\textbf{S} = (S_1, S_2, S_3, \tilde{S}_1, \tilde{S}_2)$ be a commuting $5$-tuple of bounded operators on a Hilbert space $\mathcal{H}$. Then $\textbf{S}$ is a $\Gamma_{E(3; 2; 1, 2)}$-isometry if and only if it can be extended to a $\Gamma_{E(3; 2; 1, 2)}$-unitary acting on a Hilbert space of minimal unitary dilation of the isometry $S_3$.
	\end{cor}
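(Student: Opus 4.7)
The plan is to mirror the strategy used in the proof of Corollary \ref{Cor 1}, invoking Theorem \ref{Wold decomposition 2} as the structural input. The easy direction is immediate: if $\textbf{S}$ is the restriction of a $\Gamma_{E(3;2;1,2)}$-unitary to a joint invariant subspace, then by Definition \ref{def-1}(5) it is a $\Gamma_{E(3;2;1,2)}$-isometry.

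For the converse, assume $\textbf{S}$ is a $\Gamma_{E(3;2;1,2)}$-isometry. By Theorem \ref{Wold decomposition 2}, after a unitary identification, $\textbf{S}$ decomposes as a direct sum of a pure part acting on $H^2(\mathcal{E})$ and a $\Gamma_{E(3;2;1,2)}$-unitary part $(M^{(1)}_D, M^{(2)}_D, M^{(3)}_D, \tilde{M}^{(1)}_D, \tilde{M}^{(2)}_D)$ on $\mathcal{F}$, where the pure part is the multiplication tuple $(M_{G^*_1+\tilde{G}_2z},\, M_{2G^*_2+2\tilde{G}_1z},\, M_z,\, M_{2\tilde{G}^*_1+2G_2z},\, M_{\tilde{G}^*_2+G_1z})$ with $G_1,2G_2,2\tilde{G}_1,\tilde{G}_2\in\mathcal{B}(\mathcal{E})$ satisfying \eqref{Commutative B(i)}, \eqref{Commutative B(ii)} and \eqref{Norm condition 2}. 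The unitary part already lives on a space where $M^{(3)}_D$ is unitary, so nothing needs to be done there. For the pure part, I would isometrically embed $H^2(\mathcal{E})\subset L^2(\mathcal{E})$ and replace $z$ by the unitary variable $\omega\in\mathbb{T}$, producing the candidate extension
\begin{equation*}
\left(\begin{pmatrix} M_{G^*_1+\tilde{G}_2\omega} & 0\\ 0 & M^{(1)}_D\end{pmatrix},
\begin{pmatrix} M_{2G^*_2+2\tilde{G}_1\omega} & 0\\ 0 & M^{(2)}_D\end{pmatrix},
\begin{pmatrix} M_\omega & 0\\ 0 & M^{(3)}_D\end{pmatrix},
\begin{pmatrix} M_{2\tilde{G}^*_1+2G_2\omega} & 0\\ 0 & \tilde{M}^{(1)}_D\end{pmatrix},
\begin{pmatrix} M_{\tilde{G}^*_2+G_1\omega} & 0\\ 0 & \tilde{M}^{(2)}_D\end{pmatrix}\right)
\end{equation*}
on $L^2(\mathcal{E})\oplus\mathcal{F}$. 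Note that $M_\omega\oplus M^{(3)}_D$ is the minimal unitary dilation of $M_z\oplus M^{(3)}_D$.

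The verification step then has three ingredients, all of which transfer cleanly from $H^2(\mathcal{E})$ to $L^2(\mathcal{E})$ because the symbols are the same Laurent polynomials in $\omega$. First, commutativity of the multiplication operators on $H^2(\mathcal{E})$ forces the symbol identities $(G^*_1+\tilde{G}_2z)(2G^*_2+2\tilde{G}_1z) = (2G^*_2+2\tilde{G}_1z)(G^*_1+\tilde{G}_2z)$ and the analogous pairings with $z$ and with $\tilde{G}^*_2+G_1z$, $2\tilde{G}^*_1+2G_2z$; since these are polynomial identities in the variable, they remain valid for $\omega\in\mathbb{T}$, so the extended tuple is commutative. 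Second, the norms of the multiplication operators are unchanged when enlarging $H^2$ to $L^2$, so the bounds $\|M_{G^*_1+\tilde{G}_2\omega}\|\leqslant 1$, $\|M_{2G^*_2+2\tilde{G}_1\omega}\|\leqslant 2$, $\|M_{2\tilde{G}^*_1+2G_2\omega}\|\leqslant 2$, $\|M_{\tilde{G}^*_2+G_1\omega}\|\leqslant 1$ persist. Third, a direct computation on $L^2(\mathcal{E})$ yields $M_{G^*_1+\tilde{G}_2\omega} = M^*_{\tilde{G}^*_2+G_1\omega}M_\omega$ and $M_{2G^*_2+2\tilde{G}_1\omega} = M^*_{2\tilde{G}^*_1+2G_2\omega}M_\omega$, so the algebraic identities $S_1=\tilde{S}^*_2S_3$, $S_2=\tilde{S}^*_1S_3$ hold for the extended tuple. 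Applying [Theorem 4.5, \cite{apal2}] then shows the extended tuple is a $\Gamma_{E(3;2;1,2)}$-isometry, and since its third coordinate $M_\omega\oplus M^{(3)}_D$ is unitary, it is in fact a $\Gamma_{E(3;2;1,2)}$-unitary.

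The main obstacle I anticipate is purely bookkeeping: one has four multiplication symbols to manage (versus the six in Corollary \ref{Cor 1}), with asymmetric norm bounds $1, 2, 2, 1$, and one has to make sure the $S_i=\tilde{S}^*_{3-i}S_3$-type relations and the commutation relations \eqref{Commutative B(i)}--\eqref{Commutative B(ii)} survive the extension from $H^2$ to $L^2$. None of this should be genuinely difficult because each identity is first a polynomial identity in the coefficient operators $G_1, G_2, \tilde{G}_1, \tilde{G}_2$ and then extends coefficient-wise to $\omega\in\mathbb{T}$; but one has to write it out carefully in order to invoke [Theorem 4.5, \cite{apal2}] at the end and conclude.
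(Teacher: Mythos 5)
Your proposal is correct and follows exactly the route the paper intends: the paper omits this proof, stating it is ``exactly similar to that of Corollary \ref{Cor 1}'', and your argument is precisely that adaptation --- Wold decomposition via Theorem \ref{Wold decomposition 2}, extension of the pure part from $H^2(\mathcal{E})$ to $L^2(\mathcal{E})$ by replacing $z$ with $\omega$, verification that commutativity, the norm bounds, and the relations $S_1=\tilde{S}^*_2S_3$, $S_2=\tilde{S}^*_1S_3$ persist, and a final appeal to [Theorem 4.5, \cite{apal2}]. No gaps.
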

	
	\section{Douglas Type Functional Model for $\Gamma_{E(3; 3; 1, 1, 1)}$-Contraction and $\Gamma_{E(3; 2; 1, 2)}$-Contraction}\label{Douglas Type Functional Model}
	
	The classical \textit{Douglas model} for a contraction $T$ acting on a Hilbert space $\mathcal{H}$ can be found in \cite{Douglas}. In this section, we develop Douglas type functional model for $\Gamma_{E(3; 3; 1, 1, 1)}$-contraction and $\Gamma_{E(3; 2; 1, 2)}$-contraction.
	
	Let $\textbf{T} = (T_1, \dots, T_7)$ be a $\Gamma_{E(3; 3; 1, 1, 1)}$-contraction on Hilbert space $\mathcal{H}$ with $F_1, \dots, F_6$ be the fundamental operators for $\textbf{T}^* = (T^*_1, \dots, T^*_7)$. Let $\textbf{N} = (N^{(1)}_D, \dots, N^{(6)}_D, N^{(7)}_D)$ defined on $\mathcal{Q}_{T^*_7}$ be the canonical $\Gamma_{E(3; 3; 1, 1, 1)}$-unitary associated with $\textbf{T}$. Define $\Pi^{\textbf{T}}_D =
	\begin{pmatrix}
		\mathcal{O}_{D_{T^*_7}, T^*_7}\\
		Q_{T^*_7}
	\end{pmatrix}$ where $\mathcal{O}_{D_{T^*_7}, T^*_7}(z) = \sum_{n \geqslant 0} z^nD_{T^*_7}T^{*n}_7$. Notice that
		\begin{equation}\label{Pi^T_D isometry}
			\begin{aligned}
				||\Pi^{\textbf{T}}_D h||^2
				&= ||\mathcal{O}_{D_{T^*_7}, T^*_7}(z)h||^2_{H^2(\mathcal{D}_{T^*_7})} + ||Q_{T^*_7}h||^2\\
				&= \sum_{n \geqslant 0} ||D_{T^*_7}T^*_7h||^2 + \lim_{n \to \infty} ||T^*_7h||^2\\
				&= \lim_{n \to \infty} \sum_{k = 1}^{n} (||T^{*k-1}_7h||^2 - ||T^{*k}_7h||^2) + \lim_{n \to \infty} ||T^*_7h||^2\\
				&= ||h||.
			\end{aligned}
		\end{equation}
		This implies that $\Pi^{\textbf{T}}_D$ is an isometry and hence $\Pi^{\textbf{T}}_D$ is an isometric embedding of $\mathcal{H}$ into
		$\begin{pmatrix}
			H^2(\mathcal{D}_{T^*_7})\\
			\mathcal{Q}_{T^*}
		\end{pmatrix}$.
	
	We show that
	\begin{equation}
		\begin{aligned}
			P_{\mathcal{H}^{\textbf{T}}_D}
			\left(
			\begin{pmatrix}
				M_{F^*_1 + F_6z} & 0\\
				0 & N^{(1)}_D
			\end{pmatrix},
			\dots,
			\begin{pmatrix}
				M_{F^*_6 + F_1z} & 0\\
				0 & N^{(6)}_D
			\end{pmatrix},
			\begin{pmatrix}
				M_z & 0\\
				0 & N^{(7)}_D
			\end{pmatrix}
			\right)\Bigg|_{\mathcal{H}^{\textbf{T}}_D}
		\end{aligned}
	\end{equation}
	is a functional model for $\textbf{T}$ with $\mathcal{H}^{\textbf{T}}_D
	:= \Ran \Pi^{\textbf{T}}_D \subset
	\begin{pmatrix}
		H^2(\mathcal{D}_{T^*_7)})\\
		\mathcal{Q}_{T^*_7}
	\end{pmatrix}$ is the corresponding model space, where $\tilde{F}_1, \dots, \tilde{F}_6$ are the fundamental operators of $\textbf{T}^*$.
	
	\begin{thm}[Douglas Model for $\Gamma_{E(3; 3; 1, 1, 1)}$-Contraction]\label{Douglas Model 1}
		Let $\textbf{T} = (T_1, \dots, T_7)$ be a $\Gamma_{E(3; 3; 1, 1, 1)}$-contraction on a Hilbert space $\mathcal{H}$ with $\tilde{F}_1, \dots, \tilde{F}_6$ be the fundamental operators of $\textbf{T}^*$. Suppose $(N^{(1)*}_D, \dots, N^{(6)*}_D, N^{(7)*}_D)$ be the canonical $\Gamma_{E(3; 3; 1, 1, 1)}$-unitary associated to $\textbf{T}$ and $\Pi^{\textbf{T}}_D =
		\begin{pmatrix}
			\mathcal{O}_{D_{T^*_7}, T^*_7}\\
			Q_{T^*_7}
		\end{pmatrix}$ be the Douglas isometric embedding of $\mathcal{H}$ into
		$
		\begin{pmatrix}
			H^2(\mathcal{D}_{T^*_7})\\
			\mathcal{Q}_{T^*_7}
		\end{pmatrix}$. Then $\textbf{T}$ is unitarily equivalent to
		\begin{equation}\label{Douglas Model 1.1}
			\begin{aligned}
				P_{\mathcal{H}^{\textbf{T}}_D}
				\left(
				\begin{pmatrix}
					M_{\tilde{F}^*_1 + \tilde{F}_6z} & 0\\
					0 & N^{(1)}_D
				\end{pmatrix},
				\dots,
				\begin{pmatrix}
					M_{\tilde{F}^*_6 + \tilde{F}_1z} & 0\\
					0 & N^{(6)}_D
				\end{pmatrix},
				\begin{pmatrix}
					M_z & 0\\
					0 & N^{(7)}_D
				\end{pmatrix}
				\right)\Bigg|_{\mathcal{H}^{\textbf{T}}_D}
			\end{aligned}
		\end{equation}
		($\tilde{F}_1, \dots, \tilde{F}_6$ satisfy \eqref{Commutative A} when \eqref{Douglas Model 1.1} is commutative) where $\mathcal{H}^{\textbf{T}}_D$ is the functional model space of $\textbf{T}$ given by
		\begin{equation}\label{Model Space 1}
			\begin{aligned}
				\mathcal{H}^{\textbf{T}}_D
				&:= \Ran \Pi^{\textbf{T}}_D \subset
				\begin{pmatrix}
					H^2(\mathcal{D}_{T^*_7})\\
					\mathcal{Q}_{T^*_7}
				\end{pmatrix}.
			\end{aligned}
		\end{equation}
	\end{thm}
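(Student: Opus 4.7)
My plan is to establish the unitary equivalence by proving that the Douglas embedding $\Pi^{\textbf{T}}_D$ intertwines the adjoints $T_i^*$ with the adjoints of the block operators in \eqref{Douglas Model 1.1}, for every $i=1,\dots,7$. Once this intertwining is in place, the isometry of $\Pi^{\textbf{T}}_D$ (verified in \eqref{Pi^T_D isometry}) forces $\mathcal{H}^{\textbf{T}}_D=\Ran\Pi^{\textbf{T}}_D$ to be co-invariant under each block operator, and then $\Pi^{\textbf{T}}_D$ implements a unitary equivalence between $\textbf{T}^*$ and the restriction of the adjoint model tuple to $\mathcal{H}^{\textbf{T}}_D$. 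Dualising yields exactly the compression statement \eqref{Douglas Model 1.1}.

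For the $7$th coordinate, the block operator is $M_z\oplus N^{(7)}_D$, and the two required identities are standard: $M_z^*\,\mathcal{O}_{D_{T^*_7},T^*_7}(z)h=\mathcal{O}_{D_{T^*_7},T^*_7}(z)T_7^*h$ follows by a direct shift-of-indices computation on the series $\sum_{n\geqslant0}z^n D_{T^*_7}T_7^{*n}h$, while $N^{(7)*}_D Q_{T^*_7}h=Q_{T^*_7}T_7^*h$ is exactly the defining relation \eqref{V^7*_T^*_7} of $V^{(7)*}_{T^*_7}$ together with the fact that $N^{(7)}_D$ extends $V^{(7)}_{T^*_7}$. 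The analogous statement $N^{(i)*}_D Q_{T^*_7}h=Q_{T^*_7}T_i^*h$ for $1\leqslant i\leqslant 6$ is built into the canonical construction via \eqref{V^i*_T^*_7}, so the $\mathcal{Q}_{T^*_7}$-component of the intertwining is free from Section \ref{Canonical Construction}.

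The main step is the $H^2$-component for $1\leqslant i\leqslant 6$: I need to verify
\[M^*_{\tilde{F}^*_i+\tilde{F}_{7-i}z}\,\mathcal{O}_{D_{T^*_7},T^*_7}(z)h=\mathcal{O}_{D_{T^*_7},T^*_7}(z)T_i^*h.\]
Writing $M^*_{\tilde{F}^*_i+\tilde{F}_{7-i}z}=I\otimes\tilde{F}_i+M_z^*\otimes\tilde{F}^*_{7-i}$ and applying it termwise to the series, the equality reduces to
\[\tilde{F}_i D_{T^*_7}+\tilde{F}^*_{7-i}D_{T^*_7}T_7^* = D_{T^*_7}T_i^*.\]
This is precisely Proposition \ref{FiFj} applied to the $\Gamma_{E(3;3;1,1,1)}$-contraction $\textbf{T}^*$, whose fundamental operators are $\tilde{F}_1,\dots,\tilde{F}_6$ by hypothesis. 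This is the crux of the argument and uses nothing beyond the fundamental equations for $\textbf{T}^*$ together with $T_i T_7=T_7T_i$.

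The step I expect to be the most delicate is verifying that the model tuple in \eqref{Douglas Model 1.1} is genuinely a commuting tuple, so that the theorem statement makes sense and the compression is well defined. For the $H^2$-blocks, commutativity of $M_{\tilde{F}^*_i+\tilde{F}_{7-i}z}$ with $M_z$ is automatic, while pairwise commutativity among $M_{\tilde{F}^*_i+\tilde{F}_{7-i}z}$ for $1\leqslant i,j\leqslant 6$ is equivalent to the operator relations \eqref{Commutative 1} on $\mathcal{D}_{T^*_7}$; these follow from the commutativity of $T_i$'s and the uniqueness of fundamental operators (Theorem 2.7 of \cite{apal3}, referenced in the discussion before this theorem). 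For the $\mathcal{Q}_{T^*_7}$-blocks, commutativity of $(N^{(1)}_D,\dots,N^{(7)}_D)$ is built into the canonical construction from Section \ref{Canonical Construction 1}. Once these commutativity and intertwining facts are assembled, the unitary equivalence is immediate from the isometry \eqref{Pi^T_D isometry} of $\Pi^{\textbf{T}}_D$.
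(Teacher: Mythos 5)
Your proposal is correct and follows essentially the same route as the paper: both reduce the unitary equivalence to the intertwining of $\Pi^{\textbf{T}}_D$ with the adjoint model tuple, dispose of the $\mathcal{Q}_{T^*_7}$-component via the canonical construction \eqref{V^i*_T^*_7}, and settle the $H^2$-component by invoking Proposition \ref{FiFj} for $\textbf{T}^*$ (the paper packages the termwise computation using the resolvent $(I-zT^*_7)^{-1}$, which is equivalent to your shift-of-indices argument on the power series). Your added care about commutativity of the model tuple is a reasonable supplement but does not change the substance of the argument.
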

	
	\begin{proof}
		Notice that $\Pi^{\textbf{T}}_D : \mathcal{H} \to \Ran \Pi^{\textbf{T}}_D$ is unitary. In order to prove $\textbf{T}$ is unitarily equivalent to \eqref{Douglas Model 1.1}, it is enough to establish
		\begin{equation}\label{DM 1.2}
			\begin{aligned}
				\Pi^{\textbf{T}}_D(T^*_1, \dots, T^*_7)
				&=
				\left(
				\begin{pmatrix}
					M_{\tilde{F}^*_1 + \tilde{F}_6z} & 0\\
					0 & N^{(1)}_D
				\end{pmatrix}^*,
				\dots,
				\begin{pmatrix}
					M_{\tilde{F}^*_6 + \tilde{F}_1z} & 0\\
					0 & N^{(6)}_D
				\end{pmatrix}^*,
				\begin{pmatrix}
					M_z & 0\\
					0 & N^{(7)}_D
				\end{pmatrix}^*
				\right)\Pi^{\textbf{T}}_D,
			\end{aligned}
		\end{equation}
		which is equivalent to the following:
		\begin{equation}\label{DM 1.3}
			\begin{aligned}
				&\mathcal{O}_{D_{T^*_7}, T^*_7}(z)(T^*_1, \dots, T^*_6, T^*_7) = (M^*_{\tilde{F}^*_1 + \tilde{F}_6z}, \dots, M^*_{\tilde{F}^*_6 + \tilde{F}_1z}, M^*_z)\mathcal{O}_{D_{T^*_7}, T^*_7}(z),
			\end{aligned}
		\end{equation}
		$$\rm{and}$$
		\begin{equation}\label{DM 1.4}
			\begin{aligned}
				&Q_{T^*_7}(T^*_1, \dots, T^*_6, T^*_7) = (N^{(1)*}_D, \dots, N^{(6)*}_D, N^{(7)*}_D)Q_{T^*_7}.
			\end{aligned}
		\end{equation}
		By \eqref{V^i*_T^*_7} of the canonical construction of $\Gamma_{E(3; 3; 1, 1, 1)}$-unitary it is immediate that \eqref{DM 1.4} holds. Thus, we only show \eqref{DM 1.3}. Since, $(T^*_1, \dots, T^*_7)$ is a $\Gamma_{E(3; 3; 1, 1, 1)}$-contraction then by applying Proposition \ref{FiFj} to $\textbf{T}^*$ we have
		\begin{equation}\label{DM 1.5}
			\begin{aligned}
				&D_{T^*_7}T^*_i = \tilde{F}_iD_{T^*_7} + \tilde{F}^*_{7-i}D_{T^*_7}T^*_7 \,\, \text{for} \,\, 1 \leqslant i \leqslant 6,
			\end{aligned}
		\end{equation}
		where $\tilde{F}_1, \dots, \tilde{F}_6$ defined on $\mathcal{D}_{T^*_7}$ are the fundamental operators of $\textbf{T}^*$. Since, $T^*_7$ is contraction then $(I - zT^*_7)$ is invertible and thus multiplying $(I - zT^*_7)^{-1}$ both side of \eqref{DM 1.5} we have
		\begin{equation}\label{DM 1.6}
			\begin{aligned}
				&D_{T^*_7}T^*_i(I - zT^*_7)^{-1} = (\tilde{F}_iD_{T^*_7} + \tilde{F}^*_{7-i}D_{T^*_7}T^*_7)(I - zT^*_7)^{-1} \,\, \text{for} \,\, 1 \leqslant i \leqslant 6.
			\end{aligned}
		\end{equation}
		As $T_i$ commutes with $T_7$ then from \eqref{DM 1.6} we get
		\begin{equation}\label{DM 1.7}
			\begin{aligned}
				&D_{T^*_7}(I - zT^*_7)^{-1}T^*_i = \tilde{F}_iD_{T^*_7}(I - zT^*_7)^{-1} + \tilde{F}^*_{7-i}D_{T^*_7}(I - zT^*_7)^{-1}T^*_7.
			\end{aligned}
		\end{equation}
		By routine computation one can obtain that \eqref{DM 1.7} is equivalent to \eqref{DM 1.3}.	This completes the proof.
	\end{proof}
	
	It is important to observe that
	\begin{equation*}
		\begin{aligned}
			\left(
			\begin{pmatrix}
				M_{\tilde{F}^*_1 + \tilde{F}_6z} & 0\\
				0 & N^{(1)}_D
			\end{pmatrix},
			\dots,
			\begin{pmatrix}
				M_{\tilde{F}^*_6 + \tilde{F}_1z} & 0\\
				0 & N^{(6)}_D
			\end{pmatrix},
			\begin{pmatrix}
				M_z & 0\\
				0 & N^{(7)}_D
			\end{pmatrix}
			\right)
		\end{aligned}
	\end{equation*}
	with $\tilde{F}_1, \dots, \tilde{F}_6$ are the fundamental operators of $\textbf{T}^*$, is a $\Gamma_{E(3; 3; 1, 1, 1)}$-isometry. In the next theorem we prove that if a $\textbf{T}$ is a $\Gamma_{E(3; 3; 1, 1, 1)}$-contraction with $T_7$ has a minimal isometric dilation $V_7$ on a larger Hilbert space $\mathcal{K}$ containing $\mathcal{H}$ then $\textbf{T}$ has unique isometric lift to $\mathcal{K}$.
	
	\begin{thm}\label{Existence and Uniqueness 1}
		Let $\textbf{T} = (T_1, \dots, T_7)$ be a $\Gamma_{E(3; 3; 1, 1, 1)}$-contraction on a Hilbert space $\mathcal{H}$ with $\tilde{F}_1, \dots, \tilde{F}_6$ be the fundamental operators for $\textbf{T}^*$ and $V_7$ be the minimal isometric dilation of $T_7$ on a larger Hilbert space $\mathcal{K}$ containing $\mathcal{H}$. Then there exists unique operators $V_1, \dots, V_6$ acting on $\mathcal{K}$ such that $\textbf{V} = (V_1, \dots, V_7)$ is a $\Gamma_{E(3; 3; 1, 1, 1)}$-isometric dilation of $\textbf{T}$ provided $F_1, \dots, F_6$ satisfies \eqref{Commutative A}.
	\end{thm}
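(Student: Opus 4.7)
The plan is to exploit the Douglas model of Theorem \ref{Douglas Model 1} to transport a ready-made $\Gamma_{E(3;3;1,1,1)}$-isometric dilation from the model space to $\mathcal{K}$. For existence, let
\[
\widehat{\textbf{V}}=\Big(M_{\tilde{F}^*_1+\tilde{F}_6 z}\oplus N^{(1)}_D,\ldots,M_{\tilde{F}^*_6+\tilde{F}_1 z}\oplus N^{(6)}_D,\,M_z\oplus N^{(7)}_D\Big)
\]
on $\widehat{\mathcal{K}}:=H^2(\mathcal{D}_{T^*_7})\oplus\mathcal{Q}_{T^*_7}$. The commutativity hypothesis on the fundamental operators (condition \eqref{Commutative A}, which translates, via Theorem \ref{Thm 1}, to the analogous relations for the $\tilde F_i$) together with $w(\tilde F_i^*+\tilde F_{7-i}z)\leq 1$ shows, by Example \ref{Example 1} together with Theorem \ref{Wold decomposition 1}, that $\widehat{\textbf{V}}$ is a genuine $\Gamma_{E(3;3;1,1,1)}$-isometry on $\widehat{\mathcal{K}}$. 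Its seventh coordinate $\widehat V_7=M_z\oplus N^{(7)}_D$ is an isometric dilation of $T_7$: the $H^2$-summand absorbs the c.n.u.\ piece of $T_7$ (with wandering subspace $\mathcal{D}_{T_7^*}$), and the $\Gamma$-unitary piece of Section \ref{Canonical Construction 1} absorbs the asymptotically unitary piece through $Q_{T^*_7}$. A direct calculation using \eqref{Pi^T_D isometry} and the intertwining relations in the proof of Theorem \ref{Douglas Model 1} shows this dilation is \emph{minimal}, so the standard uniqueness of minimal isometric dilations supplies a unitary $U:\widehat{\mathcal{K}}\to\mathcal{K}$ which is the identity on $\mathcal{H}$ and satisfies $UV_7 U^*=\widehat V_7$. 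Setting $V_i:=U^*\widehat V_i U$ for $1\leq i\leq 6$ yields a commuting tuple $\textbf{V}=(V_1,\ldots,V_7)$ which is a $\Gamma_{E(3;3;1,1,1)}$-isometric dilation of $\textbf{T}$.

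For uniqueness, suppose $\textbf{V}'=(V'_1,\ldots,V'_6,V_7)$ is another $\Gamma_{E(3;3;1,1,1)}$-isometric dilation of $\textbf{T}$ with the same seventh coordinate. Applying Theorem \ref{Wold decomposition 1} to both $\textbf{V}$ and $\textbf{V}'$ using the \emph{same} Wold decomposition of $V_7$ (the wandering subspace $\mathcal{E}$ and the unitary summand $\mathcal{F}$ of $V_7$ are intrinsic to $V_7$), we may simultaneously write
\[
V_i = M_{F_i^*+F_{7-i}z}\oplus N^{(i)}, \qquad V'_i = M_{F_i^{\prime *}+F'_{7-i}z}\oplus N^{(i)\prime},
\]
for suitable $F_i,F_i'\in\mathcal{B}(\mathcal{E})$ and $\Gamma$-unitary tuples $(N^{(i)})$, $(N^{(i)\prime})$ on $\mathcal{F}$. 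The dilation identity $P_{\mathcal{H}}V_i^{*n}|_{\mathcal{H}}=T_i^{*n}$ (and likewise for $V'_i$), combined with Proposition \ref{FiFj} applied to $\textbf{T}^*$, forces both $F_i$ and $F_i'$ to satisfy the fundamental equations defining $\tilde F_i$; by the uniqueness clause of Proposition \ref{FiFj}, $F_i=F_i'=\tilde F_i$. Analogously, restricting to the unitary summand of $V_7$ and invoking the uniqueness of the canonical $\Gamma_{E(3;3;1,1,1)}$-unitary from Theorem \ref{Thm 5} gives $N^{(i)}=N^{(i)\prime}$. Hence $V_i=V'_i$ for $1\leq i\leq 6$.

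The main obstacle I expect lies in the verification that the Douglas coordinate $\widehat V_7=M_z\oplus N^{(7)}_D$ is actually the \emph{minimal} isometric dilation of $T_7$, as opposed to merely an isometric lift: one must check that the orbit $\{\widehat V_7^{n}\,\Pi^{\textbf{T}}_D h:\,n\geq 0,\,h\in\mathcal{H}\}$ spans $\widehat{\mathcal{K}}$, which requires combining the density of $\{N^{(7)n}_D Q_{T^*_7}h\}$ in $\mathcal{Q}_{T^*_7}$ (from the minimality built into the canonical $\Gamma$-unitary of Section \ref{Canonical Construction 1}) with the fact that iterates of $M_z$ on the Douglas symbol $\mathcal{O}_{D_{T^*_7},T^*_7}$ generate $H^2(\mathcal{D}_{T^*_7})$. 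A secondary subtlety is the correct reading of the hypothesis on $F_1,\ldots,F_6$: condition \eqref{Commutative A} is used exactly to ensure that the multiplication operators $M_{\tilde F_i^*+\tilde F_{7-i}z}$ mutually commute, which is precisely what converts $\widehat{\textbf{V}}$ from a tuple of contractions into a commuting $\Gamma_{E(3;3;1,1,1)}$-isometry; without it the construction collapses.
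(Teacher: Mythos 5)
Your proposal is correct and follows essentially the same route as the paper: existence comes from reading off the $\Gamma_{E(3;3;1,1,1)}$-isometry supplied by the Douglas model of Theorem \ref{Douglas Model 1}, and uniqueness from the block-diagonal structure forced by the Wold-type argument of Theorem \ref{Wold decomposition 1}, the uniqueness of the fundamental operators of $\textbf{T}^*$ (Proposition \ref{FiFj}), and the density of $\{N^{(7)n}_DQ_{T^*_7}h : n\geqslant 0,\ h\in\mathcal{H}\}$ in $\mathcal{Q}_{T^*_7}$. The only point to tighten is the unitary summand: the paper identifies $V^{(i)}_{22}$ with $N^{(i)}_D$ directly from the intertwining $V^{(i)*}_{22}Q_{T^*_7}=Q_{T^*_7}T^*_i$ together with that density, which is the precise argument you should substitute for your appeal to Theorem \ref{Thm 5}.
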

	
	\begin{proof}
		It is clear form Theorem \ref{Douglas Model 1} that
		\begin{equation*}
			\begin{aligned}
				\left(V_1 =
				\begin{pmatrix}
					M_{\tilde{F}^*_1 + \tilde{F}_6z} & 0\\
					0 & N^{(1)}_D
				\end{pmatrix},
				\dots,
				V_6 =
				\begin{pmatrix}
					M_{\tilde{F}^*_6 + \tilde{F}_1z} & 0\\
					0 & N^{(6)}_D
				\end{pmatrix},
				V_7 =
				\begin{pmatrix}
					M_z & 0\\
					0 & N^{(7)}_D
				\end{pmatrix}
				\right)
			\end{aligned}
		\end{equation*}
		is a $\Gamma_{E(3; 3; 1, 1, 1)}$-isometry. This proves the existence of $\Gamma_{E(3; 3; 1, 1, 1)}$-isometric dilation of $\textbf{T}$.
		
		To prove the uniqueness, let $\Pi^{\textbf{T}}_D =
		\begin{pmatrix}
			\mathcal{O}_{D_{T^*_7}, T^*_7}\\
			Q_{T^*_7}
		\end{pmatrix}$ be the Douglas isometric embedding and
		$
		\begin{pmatrix}
			H^2(\mathcal{D}_{T^*_7})\\
			\mathcal{Q}_{T^*_7}
		\end{pmatrix}$ be the model space for minimal isometric lift of $T_7$ with $V^{(7)}_D =
		\begin{pmatrix}
			M_z & 0\\
			0 & N^{(7)}_D
		\end{pmatrix}$ be the minimal isometric dilation of $T_7$. Suppose $V_i$'s are unitarily equivalent to
		\begin{equation}\label{Uniq 1}
			\begin{aligned}
				V^{(i)}_D &=
				\begin{pmatrix}
					V^{(i)}_{11} & V^{(i)}_{12}\\
					V^{(i)}_{21} & V^{(i)}_{22}
				\end{pmatrix} \,\, \text{for} \,\, 1 \leqslant i \leqslant 6.
			\end{aligned}
		\end{equation}
		with respect to the embedding $\Pi_D$. Thus,
		\begin{equation}\label{Uniq 2}
			\begin{aligned}
				\textbf{V}_D&=
				\left(
				\begin{pmatrix}
					V^{(1)}_{11} & V^{(1)}_{12}\\
					V^{(1)}_{21} & V^{(1)}_{22}
				\end{pmatrix}, \dots,
				\begin{pmatrix}
					V^{(6)}_{11} & V^{(6)}_{12}\\
					V^{(6)}_{21} & V^{(6)}_{22}
				\end{pmatrix},
				\begin{pmatrix}
					M_z & 0\\
					0 & N^{(7)}_D
				\end{pmatrix}
				\right)
			\end{aligned}
		\end{equation}
		is a $\Gamma_{E(3; 3; 1, 1, 1)}$-isometry. Then by proceeding similarly as \eqref{M 1.1} and \eqref{M 1.2} we obtain
		\begin{equation}\label{Uniq 3}
			\begin{aligned}
				V^{(i)}_D &=
				\begin{pmatrix}
					M_{X^*_i + X_{7-i}z} & 0\\
					0 & V^{(i)}_{22}
				\end{pmatrix} \,\, \text{for} \,\, 1 \leqslant i \leqslant 6
			\end{aligned}
		\end{equation}
		for some $X_i \in \mathcal{B}(\mathcal{D}_{T^*_7})$ such that $X^*_i + X_{7-i}z$ is a contraction for all $z \in \mathbb{D}$. Since $\textbf{V}_D$ is a $\Gamma_{E(3; 3; 1, 1, 1)}$-isometric lift of $\textbf{T}$ then $V^{(i)}_D$'s satisfy the following:
		\begin{equation}\label{Uniq 4}
			\begin{aligned}
				\begin{pmatrix}
					M^*_{X^*_i + X_{7-i}z} & 0\\
					0 & V^{(i)*}_{22}
				\end{pmatrix}
				\begin{pmatrix}
					\mathcal{O}_{D_{T^*_7}, T^*_7}\\
					Q_{T^*_7}
				\end{pmatrix} &=
				\begin{pmatrix}
					\mathcal{O}_{D_{T^*_7}, T^*_7}\\
					Q_{T^*_7}
				\end{pmatrix}T^*_i \,\, \text{for} \,\, 1 \leqslant i \leqslant 6.
			\end{aligned}
		\end{equation}
		Note that \eqref{Uniq 4} can be split into
		\begin{equation}\label{Uniq 5}
			\begin{aligned}
				M^*_{X^*_i + X_{7-i}z}\mathcal{O}_{D_{T^*_7}, T^*_7}
				&= \mathcal{O}_{D_{T^*_7}, T^*_7}T^*_i
			\end{aligned}
		\end{equation}
		$$\rm{and}$$
		\begin{equation}\label{Uniq 6}
			\begin{aligned}
				V^{(i)*}_{22}Q_{T^*_7}
				&= Q_{T^*_7}T^*_i
			\end{aligned}
		\end{equation}
		for $1 \leqslant i \leqslant 6$. We show that $V^{(i)}_{22} = N^{(i)}_D$ and $X_i = F_i$ for $1 \leqslant i
		\leqslant 6$. Observe that $V^{(i)}_{22}N^{(7)}_D = N^{(7)}_DV^{(i)}_{22}$ and as $N^{(7)}_D$ is unitary then from \eqref{Uniq 6} we get
		\begin{equation}
			\begin{aligned}
				V^{(i)*}_{22}(N^{(7)n}_DQ_{T^*_7}h)
				&= N^{(7)n}_DV^{(i)*}_{22}Q_{T^*_7}h\\
				&= N^{(7)n}_DQ_{T^*_7}T^*_ih\\
				&= N^{(7)n}_DN^{(i)*}_DQ_{T^*_7}T^*_ih\\
				&= N^{(i)*}_D(N^{(7)n}_DQ_{T^*_7}T^*_ih).
			\end{aligned}
		\end{equation}
		Since, $\{N^{(7)n}_DQ_{T^*_7}h : n \geqslant 0, h \in \mathcal{H}\}$ is dense in $\mathcal{Q}_{T^*_7}$ then it follows that $V^{(i)*}_{22} = N^{(i)*}_D$ for $1 \leqslant i \leqslant 6$.
		
		Observe that \eqref{Uniq 5} can be written by expanding the power series of $\mathcal{O}_{D_{T^*_7}, T^*_7}$ as
		\begin{equation}
			\begin{aligned}
				X_iD_{T^*_7} + X^*_{7-i}D_{T^*_7}T^*_7
				&= D_{T^*_7}T^*_i \,\, \text{for} \,\, 1 \leqslant i \leqslant 6.
			\end{aligned}
		\end{equation}
		By [Theorem $2.7$, \cite{apal3}] it follows that $X_1, \dots, X_6$ are the fundamental operators of $\textbf{T}^*$; that is $X_i = \tilde{F}_i$ for $1 \leqslant i \leqslant 6$. This proves the uniqueness of $V_1, \dots, V_6$. Hence, the proof is done.
	\end{proof}
	
	Let $\textbf{S} = (S_1, S_2, S_3, \tilde{S}_1, \tilde{S}_2)$ be a $\Gamma_{E(3; 2; 1, 2)}$-contraction on a Hilbert space $\mathcal{H}$ with $\hat{G}_1, 2\hat{G}_2, 2\hat{\tilde{G}}_1, \hat{\tilde{G}}_2$ be the fundamental operators of $\textbf{S}^*$. Suppose $\textbf{M} = (M^{(1)}_D, M^{(2)}_D, M^{(3)}_D, \tilde{M}^{(1)}_D, \tilde{M}^{(2)}_D)$ defined on $\mathcal{Q}_{S^*_3}$ be the canonical $\Gamma_{E(3; 2; 1, 2)}$-unitary associated to $\textbf{S}$. We define $\Pi^{\textbf{S}}_D =
	\begin{pmatrix}
		\mathcal{O}_{D_{S^*_3}, S^*_3}\\
		Q_{S^*_3}
	\end{pmatrix}$ where $\mathcal{O}_{D_{S^*_3}, S^*_3}(z) = \sum_{n \geqslant 0} z^nD_{S^*_3}S^{*n}_3$. It follows similarly as \eqref{Pi^T_D isometry} that $\Pi^{\textbf{S}}_D$ is an isometry. We prove that
	\begin{equation}
		\begin{aligned}
			&P_{\mathcal{H}^{\textbf{S}}_D}
			\left(
			\begin{pmatrix}
				M_{\hat{G}^*_1 + \hat{\tilde{G}}_2z} & 0\\
				0 & M^{(1)}_D
			\end{pmatrix},
			\begin{pmatrix}
				M_{2\hat{G}^*_2 + 2\hat{\tilde{G}}_1z} & 0\\
				0 & M^{(2)}_D
			\end{pmatrix},
			\begin{pmatrix}
				M_z & 0\\
				0 & M^{(3)}_D
			\end{pmatrix}, \right.\\
			&\left. \hspace{2.5cm}
			\begin{pmatrix}
				M_{2\hat{\tilde{G}}^*_1 + 2\hat{G}_2z} & 0\\
				0 & \tilde{M}^{(1)}_D
			\end{pmatrix}
			\begin{pmatrix}
				M_{\hat{\tilde{G}}^*_2 + \hat{G}_1z} & 0\\
				0 & \tilde{M}^{(2)}_D
			\end{pmatrix}
			\right)\Bigg|_{\mathcal{H}^{\textbf{S}}_D}
		\end{aligned}
	\end{equation}
	is a model for $\textbf{S}$ with $\mathcal{H}^{\textbf{S}}_D
	:= \Ran \Pi^{\textbf{S}}_D \subset
	\begin{pmatrix}
		H^2(\mathcal{D}_{S^*_3)})\\
		\mathcal{Q}_{S^*_3}
	\end{pmatrix}$ is the functional model space, where $\hat{G}_1, 2\hat{G}_2, 2\hat{\tilde{G}}_1, \hat{\tilde{G}}_2$ are the fundamental operators of $\textbf{S}^*$.
	
	We only state the following theorem as the proof is similar to that of Theorem \ref{Douglas Model 1}.
	
	\begin{thm}[Douglas Model for $\Gamma_{E(3; 2; 1, 2)}$-Contraction]\label{Douglas Model 2}
		Let $\textbf{S} = (S_1, S_2, S_3, \tilde{S}_1, \tilde{S}_2)$ be a $\Gamma_{E(3; 2; 1, 2)}$-contraction on a Hilbert space $\mathcal{H}$ with $\hat{G}_1, 2\hat{G}_2, 2\hat{\tilde{G}}_1, \hat{\tilde{G}}_2$ be the fundamental operators of $\textbf{S}^*$. Suppose $(M^{(1)*}_D, M^{(2)*}_D, M^{(3)*}_D,\\ \tilde{M}^{(1)*}_D, \tilde{M}^{(2)*}_D)$ is the canonical $\Gamma_{E(3; 2; 1, 2)}$-unitary associated to $\textbf{S}$ and $\Pi^{\textbf{S}}_D =
		\begin{pmatrix}
			\mathcal{O}_{D_{S^*_3}, S^*_3}\\
			Q_{S^*_3}
		\end{pmatrix}$ be the Douglas isometric embedding of $\mathcal{H}$ into
		$
		\begin{pmatrix}
			H^2(\mathcal{D}_{S^*_3})\\
			\mathcal{Q}_{S^*_3}
		\end{pmatrix}$. Then $\textbf{S}$ is unitarily equivalent to
		\begin{equation}\label{Douglas Model 2.1}
			\begin{aligned}
				&P_{\mathcal{H}^{\textbf{S}}_D}
				\left(
				\begin{pmatrix}
					M_{\hat{G}^*_1 + \hat{\tilde{G}}_2z} & 0\\
					0 & M^{(1)}_D
				\end{pmatrix},
				\begin{pmatrix}
					M_{2\hat{G}^*_2 + 2\hat{\tilde{G}}_1z} & 0\\
					0 & M^{(2)}_D
				\end{pmatrix},
				\begin{pmatrix}
					M_z & 0\\
					0 & M^{(3)}_D
				\end{pmatrix}, \right.\\
				&\left. \hspace{2cm}
				\begin{pmatrix}
					M_{2\hat{\tilde{G}}^*_1 + 2\hat{G}_2z} & 0\\
					0 & \tilde{M}^{(1)}_D
				\end{pmatrix}
				\begin{pmatrix}
					M_{\hat{\tilde{G}}^*_2 + \hat{G}_1z} & 0\\
					0 & \tilde{M}^{(2)}_D
				\end{pmatrix}
				\right)\Bigg|_{\mathcal{H}^{\textbf{S}}_D}
			\end{aligned}
		\end{equation}
		($\hat{G}_1, 2\hat{G}_2, 2\hat{\tilde{G}}_1, \hat{\tilde{G}}_2$ satisfy \eqref{Commutative B(i)} and \eqref{Commutative B(ii)} when \eqref{Douglas Model 2.1} is commutative) where $\mathcal{H}^{\textbf{S}}_D$ is the functional model space of $\textbf{S}$ given by
		\begin{equation}\label{Model Space 2}
			\begin{aligned}
				\mathcal{H}^{\textbf{S}}_D
				&:= \Ran \Pi^{\textbf{S}}_D \subset
				\begin{pmatrix}
					H^2(\mathcal{D}_{S^*_3})\\
					\mathcal{Q}_{S^*_3}
				\end{pmatrix}.
			\end{aligned}
		\end{equation}
	\end{thm}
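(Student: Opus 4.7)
The plan is to mirror the proof of Theorem \ref{Douglas Model 1} verbatim, replacing the $7$-tuple setup by the $5$-tuple setup and using the $\Gamma_{E(3;2;1,2)}$-fundamental equations from Proposition \ref{s1s3} applied to $\textbf{S}^*$. First, I would observe that $\Pi^{\textbf{S}}_D:\mathcal{H}\to\Ran\Pi^{\textbf{S}}_D$ is a surjective isometry by the computation analogous to \eqref{Pi^T_D isometry}, so to establish the unitary equivalence it suffices to verify the adjoint intertwining
\begin{equation*}
\Pi^{\textbf{S}}_D(S^*_1,S^*_2,S^*_3,\tilde{S}^*_1,\tilde{S}^*_2)
=(\textbf{V}_D^{(1)*},\textbf{V}_D^{(2)*},\textbf{V}_D^{(3)*},\tilde{\textbf{V}}_D^{(1)*},\tilde{\textbf{V}}_D^{(2)*})\Pi^{\textbf{S}}_D,
\end{equation*}
where $\textbf{V}_D^{(k)}$ and $\tilde{\textbf{V}}_D^{(j)}$ denote the five block-diagonal operators in \eqref{Douglas Model 2.1}. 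Since $\Pi^{\textbf{S}}_D$ is column-shaped, each such identity splits into two scalar conditions, one on the $H^2(\mathcal{D}_{S^*_3})$ coordinate and one on the $\mathcal{Q}_{S^*_3}$ coordinate.

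For the $\mathcal{Q}_{S^*_3}$-coordinate, the required identities are
\begin{equation*}
M_D^{(k)*}Q_{S^*_3}=Q_{S^*_3}S^*_k\quad(k=1,2,3),\qquad
\tilde{M}_D^{(j)*}Q_{S^*_3}=Q_{S^*_3}\tilde{S}^*_j\quad(j=1,2),
\end{equation*}
and these follow directly from the defining equations \eqref{W^{(3)*}_S^*_3} and \eqref{W^i*_S^*_3, tilda W^j*_S^*_3} of the canonical $\Gamma_{E(3;2;1,2)}$-unitary, exactly as \eqref{DM 1.4} did in the $\Gamma_{E(3;3;1,1,1)}$ case. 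For the $H^2(\mathcal{D}_{S^*_3})$-coordinate, I would apply Proposition \ref{s1s3} to the $\Gamma_{E(3;2;1,2)}$-contraction $\textbf{S}^*$: the fundamental operators of $\textbf{S}^*$ are $\hat{G}_1,2\hat{G}_2,2\hat{\tilde{G}}_1,\hat{\tilde{G}}_2$ on $\mathcal{D}_{S^*_3}$, so
\begin{equation*}
D_{S^*_3}S^*_1=\hat{G}_1D_{S^*_3}+\hat{\tilde{G}}^*_2D_{S^*_3}S^*_3,\qquad
D_{S^*_3}\tilde{S}^*_2=\hat{\tilde{G}}_2D_{S^*_3}+\hat{G}^*_1D_{S^*_3}S^*_3,
\end{equation*}
\begin{equation*}
D_{S^*_3}\tfrac{S^*_2}{2}=\hat{G}_2D_{S^*_3}+\hat{\tilde{G}}^*_1D_{S^*_3}S^*_3,\qquad
D_{S^*_3}\tfrac{\tilde{S}^*_1}{2}=\hat{\tilde{G}}_1D_{S^*_3}+\hat{G}^*_2D_{S^*_3}S^*_3.
\end{equation*}
Multiplying each equation on the right by the bounded operator $(I-zS^*_3)^{-1}$ and using the commutativity $S_kS_3=S_3S_k$ and $\tilde{S}_jS_3=S_3\tilde{S}_j$, I would obtain identities of the form
\begin{equation*}
D_{S^*_3}(I-zS^*_3)^{-1}S^*_1=\bigl(\hat{G}_1+\hat{\tilde{G}}^*_2zT_{S^*_3}\text{-shift}\bigr)D_{S^*_3}(I-zS^*_3)^{-1},
\end{equation*}
and analogous relations for the other four slots, which by a routine term-by-term power-series identification are exactly the intertwining of $S^*_1,S^*_2/2,S^*_3,\tilde{S}^*_1/2,\tilde{S}^*_2$ with $M^*_{\hat{G}^*_1+\hat{\tilde{G}}_2z}$, $M^*_{2\hat{G}^*_2+2\hat{\tilde{G}}_1z}/2$, $M^*_z$, $M^*_{2\hat{\tilde{G}}^*_1+2\hat{G}_2z}/2$, $M^*_{\hat{\tilde{G}}^*_2+\hat{G}_1z}$ against $\mathcal{O}_{D_{S^*_3},S^*_3}$.

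The only nontrivial obstacle is bookkeeping: the $\Gamma_{E(3;2;1,2)}$ setting has four fundamental operators with asymmetric normalization (the factors $1/2$ attached to $G_2,\tilde{G}_1$ and their hats), so one must take care to match each intertwiner with the correct symbol $\hat{G}^*_i+\hat{\tilde{G}}_{3-i}z$ or $\hat{\tilde{G}}^*_j+\hat{G}_{3-j}z$ and confirm that the numerical factors of $2$ line up on both sides. Once the four fundamental equations are written out and the identification after multiplying by $(I-zS^*_3)^{-1}$ is carried out coordinate by coordinate, the proof closes exactly as in Theorem \ref{Douglas Model 1}; the parenthetical about \eqref{Commutative B(i)}--\eqref{Commutative B(ii)} follows automatically from the commutativity of the original $5$-tuple $\textbf{S}^*$ combined with the uniqueness of the fundamental operators in Remark \ref{Rem 1}.
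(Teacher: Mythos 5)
Your proposal follows exactly the route the paper intends: the paper omits the proof of Theorem \ref{Douglas Model 2} precisely because it is the verbatim analogue of Theorem \ref{Douglas Model 1}, namely splitting the adjoint intertwining of $\Pi^{\textbf{S}}_D$ into the $\mathcal{Q}_{S^*_3}$-coordinate (handled by the canonical construction \eqref{W^{(3)*}_S^*_3}, \eqref{W^i*_S^*_3, tilda W^j*_S^*_3}) and the $H^2(\mathcal{D}_{S^*_3})$-coordinate (handled by Proposition \ref{s1s3} applied to $\textbf{S}^*$, multiplication by $(I-zS^*_3)^{-1}$, and power-series identification). This matches the paper's approach, so no further comparison is needed.
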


	From Theorem \ref{Douglas Model 2}, we notice that
	\begin{equation*}
		\begin{aligned}
			&\left(
			\begin{pmatrix}
				M_{\hat{G}^*_1 + \hat{\tilde{G}}_2z} & 0\\
				0 & M^{(1)}_D
			\end{pmatrix},
			\begin{pmatrix}
				M_{2\hat{G}^*_2 + 2\hat{\tilde{G}}_1z} & 0\\
				0 & M^{(2)}_D
			\end{pmatrix},
			\begin{pmatrix}
				M_z & 0\\
				0 & M^{(3)}_D
			\end{pmatrix}, \right.\\
			&\left. \hspace{2cm}
			\begin{pmatrix}
				M_{2\hat{\tilde{G}}^*_1 + 2\hat{G}_2z} & 0\\
				0 & \tilde{M}^{(1)}_D
			\end{pmatrix}
			\begin{pmatrix}
				M_{\hat{\tilde{G}}^*_2 + \hat{G}_1z} & 0\\
				0 & \tilde{M}^{(2)}_D
			\end{pmatrix}
			\right)
		\end{aligned}
	\end{equation*}
	with $\hat{G}_1, 2\hat{G}_2, 2\hat{\tilde{G}}_1, \hat{\tilde{G}}_2$ are the fundamental operators of $\textbf{S}^*$, is a $\Gamma_{E(3; 2; 1, 2)}$-isometry. We provide that if $\textbf{S}$ is a $\Gamma_{E(3; 2; 1, 2)}$-contraction with $W_3$ is the minimal isometric dilation $S_3$ on a larger Hilbert space $\mathcal{K}$ containing $\mathcal{H}$ then $\textbf{S}$ has unique isometric lift to $\mathcal{K}$. We omit the proof of the following theorem as it is similar to proof of Theorem \ref{Existence and Uniqueness 1}.
	
	\begin{thm}\label{Existence and Uniqueness 2}
		Let $\textbf{S} = (S_1, S_2, S_3, \tilde{S}_1, \tilde{S}_2)$ be a $\Gamma_{E(3; 2; 1, 2)}$-contraction on a Hilbert space $\mathcal{H}$ and $W_3$ be the minimal isometric dilation of $S_3$ on a larger Hilbert space $\mathcal{K}$ containing $\mathcal{H}$. Then there exists unique operators $W_1, W_2, \tilde{W}_1, \tilde{W}_2$ acting on $\mathcal{K}$ such that $\textbf{W} = (W_1, W_2, W_3, \tilde{W}_1, \tilde{W}_2)$ is a $\Gamma_{E(3; 2; 1, 2)}$-isometric dilation of $\textbf{S}$.
	\end{thm}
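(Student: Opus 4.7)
The plan is to mirror, step by step, the argument used for Theorem \ref{Existence and Uniqueness 1}, replacing the sevenfold $\Gamma_{E(3;3;1,1,1)}$-data by the fivefold $\Gamma_{E(3;2;1,2)}$-data and invoking Theorem \ref{Douglas Model 2} together with Proposition \ref{s1s3} in place of Proposition \ref{FiFj}.

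For existence, I would take $\hat{G}_1, 2\hat{G}_2, 2\hat{\tilde{G}}_1, \hat{\tilde{G}}_2$ to be the fundamental operators of $\textbf{S}^*$ and let $(M^{(1)}_D, M^{(2)}_D, M^{(3)}_D, \tilde{M}^{(1)}_D, \tilde{M}^{(2)}_D)$ be the canonical $\Gamma_{E(3;2;1,2)}$-unitary produced in Section \ref{Canonical Construction 2}. Theorem \ref{Douglas Model 2} provides exactly the required $\Gamma_{E(3;2;1,2)}$-isometric lift of $\textbf{S}$ on $H^2(\mathcal{D}_{S^*_3}) \oplus \mathcal{Q}_{S^*_3}$, whose $S_3$-slot $M_z \oplus M^{(3)}_D$ is the minimal isometric dilation of $S_3$.

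For uniqueness, let $\textbf{W} = (W_1, W_2, W_3, \tilde{W}_1, \tilde{W}_2)$ be any $\Gamma_{E(3;2;1,2)}$-isometric dilation of $\textbf{S}$ on the minimal isometric dilation space of $S_3$. Identifying this space with $H^2(\mathcal{D}_{S^*_3}) \oplus \mathcal{Q}_{S^*_3}$ via $\Pi^{\textbf{S}}_D$, I would write each of $W_1, W_2, \tilde{W}_1, \tilde{W}_2$ as a $2 \times 2$ block matrix with respect to this decomposition. Commutativity with $W_3 = M_z \oplus M^{(3)}_D$, combined with Lemma \ref{Lem 3} applied to the unitary $M^{(3)}_D$ and to $M_z^*$ (which tends strongly to zero), forces both off-diagonal blocks to vanish, so each of the four operators is block diagonal. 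For the bottom-right corners, the intertwining $W_i^* Q_{S^*_3} = Q_{S^*_3} S_i^*$ and its analogue for $\tilde{W}_j$, together with the density of $\{M^{(3)n}_D Q_{S^*_3} h : n \geqslant 0,\, h \in \mathcal{H}\}$ in $\mathcal{Q}_{S^*_3}$ coming from minimality, identify these corners with $M^{(1)}_D, M^{(2)}_D, \tilde{M}^{(1)}_D, \tilde{M}^{(2)}_D$, exactly as in the proof of Theorem \ref{Existence and Uniqueness 1}. For the top-left corners, each one commutes with $M_z$ on $H^2(\mathcal{D}_{S^*_3})$, hence is a multiplication operator $M_\Phi$ with $\Phi \in H^\infty(\mathcal{B}(\mathcal{D}_{S^*_3}))$. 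The four intertwining identities with $\Pi^{\textbf{S}}_D$ translate, after expanding the power series of $\mathcal{O}_{D_{S^*_3}, S^*_3}$, into the four fundamental equations for $\textbf{S}^*$ given in Proposition \ref{s1s3}; the uniqueness clause of that proposition then forces the symbols $\Phi$ to be $\hat{G}^*_1 + \hat{\tilde{G}}_2 z$, $2\hat{G}^*_2 + 2\hat{\tilde{G}}_1 z$, $2\hat{\tilde{G}}^*_1 + 2\hat{G}_2 z$ and $\hat{\tilde{G}}^*_2 + \hat{G}_1 z$ respectively, pinning $\textbf{W}$ down uniquely.

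The main obstacle I expect is the bookkeeping: the $\Gamma_{E(3;2;1,2)}$-setting has two \emph{paired} fundamental equations rather than the cleanly symmetric pairing $(T_i, T_{7-i})$ of the $\Gamma_{E(3;3;1,1,1)}$-case, so the intertwining argument has to be run four times with the correct pairings $(S_1, \tilde{S}_2)$ and $(S_2/2, \tilde{S}_1/2)$, and one must verify that the resulting multiplication symbols satisfy the commutativity hypotheses \eqref{Commutative B(i)}--\eqref{Commutative B(ii)}. This last commutativity should follow automatically from the commutativity of $\textbf{W}$ together with the injectivity of the map $\Phi \mapsto M_\Phi$, but it is the place where one has to be most careful in order to apply Theorem \ref{Wold decomposition 2} and conclude that $\textbf{W}$ is indeed a $\Gamma_{E(3;2;1,2)}$-isometry of the stated form.
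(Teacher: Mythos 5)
Your proposal is correct and follows exactly the route the paper intends: the authors omit this proof, stating only that it is similar to that of Theorem \ref{Existence and Uniqueness 1}, and your argument is precisely that proof transposed to the five-tuple setting, with Theorem \ref{Douglas Model 2} supplying existence and Proposition \ref{s1s3} supplying the uniqueness of the multiplication symbols. The one point to tighten is that Lemma \ref{Lem 3} by itself annihilates only one off-diagonal corner of each block matrix (the one intertwining the unitary $M^{(3)}_D$ with $M_z$); the other corner vanishes, as in \eqref{M 1.2}, from the $\Gamma_{E(3; 2; 1, 2)}$-isometry relations $W_1 = \tilde{W}^*_2 W_3$ and $W_2 = \tilde{W}^*_1 W_3$, which you in any case have available since $\textbf{W}$ is assumed to be a $\Gamma_{E(3; 2; 1, 2)}$-isometry.
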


	\section{Sz.-Nagy-Foias Type Functional Model for C.N.U. $\Gamma_{E(3; 3; 1, 1, 1)}$-Contraction and C.N.U. $\Gamma_{E(3; 2; 1, 2)}$-Contraction}\label{Nagy-Foias Type Functional Model}
	
	The classical Nagy-Foias model for c.n.u. contraction can be found \cite{Nagy}. We demonstrate Nagy-Foias type functional model for c.n.u. $\Gamma_{E(3; 3; 1, 1, 1)}$-contraction and c.n.u. $\Gamma_{E(3; 2; 1, 2)}$-contraction in this section.

	Let $\textbf{T} = (T_1, \dots, T_7)$ be a c.n.u. $\Gamma_{E(3; 3; 1, 1, 1)}$-contraction on a Hilbert space $\mathcal{H}$. Thus $T_7$ is c.n.u. contraction. Consider the function
	\begin{equation}\label{Delta_{T_7}}
		\begin{aligned}
			\triangle_{T_7}(\omega) &=
			(I - \Theta_{T_7}(\omega)^*\Theta_{T_7}(\omega))^{1/2}.
		\end{aligned}
	\end{equation}
	According to Sz.-Nagy and Foias,
	\begin{equation}\label{V^T_NF}
		\begin{aligned}
			V^{\textbf{T}}_{NF} &:=
			\begin{pmatrix}
				M_z & 0\\
				0 & M_{\omega}|_{\overline{\triangle_{T_7} L^2(\mathcal{D}_{T_7})}}
			\end{pmatrix} :
			\begin{pmatrix}
				H^2(\mathcal{D}_{T^*_7})\\
				\overline{\triangle_{T_7} L^2(\mathcal{D}_{T_7})}
			\end{pmatrix} \to
			\begin{pmatrix}
				H^2(\mathcal{D}_{T^*_7})\\
				\overline{\triangle_{T_7} L^2(\mathcal{D}_{T_7})}
			\end{pmatrix}
		\end{aligned}
	\end{equation}
	is a minimal isometric dilation of $T_7$ via the corresponding isometric embedding
	\begin{equation}\label{Pi^T_NF}
		\begin{aligned}
			\Pi^{\textbf{T}}_{NF} : \mathcal{H} \to
			\begin{pmatrix}
				H^2(\mathcal{D}_{T^*_7})\\
				\overline{\triangle_{T_7} L^2(\mathcal{D}_{T_7})}
			\end{pmatrix}
		\end{aligned}
	\end{equation}
	such that
	\begin{equation}\label{H^T_NF}
		\begin{aligned}
			\mathcal{H}^{\textbf{T}}_{NF} &= \Ran \Pi^{\textbf{T}}_{NF} =
			\begin{pmatrix}
				H^2(\mathcal{D}_{T^*_7})\\
				\overline{\triangle_{T_7} L^2(\mathcal{D}_{T_7})}
			\end{pmatrix} \ominus
			\begin{pmatrix}
				\Theta_{T_7}\\
				\triangle_{T_7}
			\end{pmatrix}
			H^2(\mathcal{D}_{T_7}).
		\end{aligned}
	\end{equation}
	
	Since, Sz.-Nagy-Foias iaometric dilation and Douglas isometric dilations are minimal then they are unitarily equivalent. In another word, there exists a unitary
	$\Phi :
	\begin{pmatrix}
		H^2(\mathcal{D}_{T^*_7})\\
		\mathcal{Q}_{T^*_7}
	\end{pmatrix}$
	such that
	\begin{equation}\label{Pi^T_NF, Pi^T_D relation}
		\begin{aligned}
			\Pi^{\textbf{T}}_{NF} &=
			\Phi
			\Pi^{\textbf{T}}_D.
		\end{aligned}
	\end{equation}
	In \cite{J. A. Ball}, Ball and Sau showed that there exists a unitary $u^{\textbf{T}}_{\min} : \mathcal{Q}_{T^*_7} \to \overline{\triangle_{T_7} L^2(\mathcal{D}_{T_7})}$ such that
	\begin{equation}\label{u^T_min, N^7_D, M_w}
		\begin{aligned}
			u^{\textbf{T}}_{\min}N^{(7)}_D &= M_{\omega}|_{\overline{\triangle_{T_7} L^2(\mathcal{D}_{T_7})}}u^{\textbf{T}}_{\min}
		\end{aligned}
	\end{equation}
	$$\rm{and}$$
	\begin{equation}\label{Pi^T_NF, Pi^T_D}
		\begin{aligned}
			\Pi^{\textbf{T}}_{NF} &=
			\begin{pmatrix}
				I_{H^2(\mathcal{D}_{T^*_7})} & 0\\
				0 & u^{\textbf{T}}_{\min}
			\end{pmatrix}
			\Pi^{\textbf{T}}_D.
		\end{aligned}
	\end{equation}	
	We define $N^{(i)}_{NF}$ on $\overline{\triangle_{T_7} L^2(\mathcal{D}_{T_7})}$ by
	\begin{equation}\label{N^i_NF}
		\begin{aligned}
			N^{(i)}_{NF} &= u^{\textbf{T}}_{\min}N^{(i)}_Du^{\textbf{T}*}_{\min} \,\, \text{for} \,\, 1 \leqslant i \leqslant 7.
		\end{aligned}
	\end{equation}
	Therefore, the following theorem on functional model for c.n.u. $\Gamma_{E(3; 3; 1, 1, 1)}$-contraction is straightforward application of \eqref{Pi^T_NF, Pi^T_D} and Theorem \ref{Douglas Model 1}.
	
	\begin{thm}[Sz.-Nagy-Foias Model for C.N.U. $\Gamma_{E(3; 3; 1, 1, 1)}$-Contraction]\label{Nagy-Foias Model 1}
		Let $\textbf{T} = (T_1, \dots, T_7)$ be a c.n.u. $\Gamma_{E(3; 3; 1, 1, 1)}$-contraction on a Hilbert space $\mathcal{H}$ with $F_1, \dots, F_6$ be the fundamental operators of $\textbf{T}^*$. Then $\textbf{T}$ is unitarily equivalent to
		\begin{equation}
			\begin{aligned}
				P_{\mathcal{H}^{\textbf{T}}_{NF}}
				\left(
				\begin{pmatrix}
					M_{\tilde{F}^*_1 + \tilde{F}_6z} & 0\\
					0 & N^{(1)}_{NF}
				\end{pmatrix},
				\dots,
				\begin{pmatrix}
					M_{\tilde{F}^*_6 + \tilde{F}_1z} & 0\\
					0 & N^{(6)}_{NF}
				\end{pmatrix},
				\begin{pmatrix}
					M_z & 0\\
					0 & N^{(7)}_{NF}
				\end{pmatrix}
				\right)\Bigg|_{\mathcal{H}^{\textbf{T}}_{NF}}
			\end{aligned}
		\end{equation}
		($\tilde{F}_1, \dots, \tilde{F}_6$ satisfy \eqref{Commutative A} when \eqref{Douglas Model 1.1} is commutative) where $\mathcal{H}^{\textbf{T}}_{NF}$ is the functional model space of $\textbf{T}$ defined by
		\begin{equation}
			\begin{aligned}
				\mathcal{H}^{\textbf{T}}_{NF} &:=
				\Ran \Pi^{\textbf{T}}_{NF} =
				\begin{pmatrix}
					H^2(\mathcal{D}_{T^*_7})\\
					\overline{\triangle_{T_7} L^2(\mathcal{D}_{T_7})}
				\end{pmatrix} \ominus
				\begin{pmatrix}
					\Theta_{T_7}\\
					\triangle_{T_7}
				\end{pmatrix}
				H^2(\mathcal{D}_{T_7}).
			\end{aligned}
		\end{equation}
	\end{thm}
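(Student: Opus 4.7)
The approach is to leverage the already-established Douglas functional model (Theorem \ref{Douglas Model 1}) and transfer it to the Sz.-Nagy-Foias setting via the unitary equivalence between the two minimal isometric dilations recorded in \eqref{Pi^T_NF, Pi^T_D}. Since both dilations of the c.n.u.\ contraction $T_7$ are minimal, they must be unitarily equivalent, and the excerpt explicitly identifies the intertwiner as the block-diagonal unitary
\[
\Phi := \begin{pmatrix} I_{H^2(\mathcal{D}_{T^*_7})} & 0 \\ 0 & u^{\textbf{T}}_{\min} \end{pmatrix} : \begin{pmatrix} H^2(\mathcal{D}_{T^*_7}) \\ \mathcal{Q}_{T^*_7} \end{pmatrix} \longrightarrow \begin{pmatrix} H^2(\mathcal{D}_{T^*_7}) \\ \overline{\triangle_{T_7} L^2(\mathcal{D}_{T_7})} \end{pmatrix},
\]
which satisfies $\Pi^{\textbf{T}}_{NF} = \Phi \, \Pi^{\textbf{T}}_D$. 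In particular, $\Phi$ maps $\mathcal{H}^{\textbf{T}}_D = \Ran \Pi^{\textbf{T}}_D$ isometrically onto $\mathcal{H}^{\textbf{T}}_{NF} = \Ran \Pi^{\textbf{T}}_{NF}$.

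The plan is to conjugate each operator of the Douglas model by $\Phi$. For $1 \leqslant i \leqslant 6$, since $\Phi$ is block-diagonal and acts as the identity on the first summand,
\[
\Phi \begin{pmatrix} M_{\tilde{F}^*_i + \tilde{F}_{7-i}z} & 0 \\ 0 & N^{(i)}_D \end{pmatrix} \Phi^* = \begin{pmatrix} M_{\tilde{F}^*_i + \tilde{F}_{7-i}z} & 0 \\ 0 & u^{\textbf{T}}_{\min} N^{(i)}_D u^{\textbf{T}*}_{\min} \end{pmatrix} = \begin{pmatrix} M_{\tilde{F}^*_i + \tilde{F}_{7-i}z} & 0 \\ 0 & N^{(i)}_{NF} \end{pmatrix},
\]
by definition \eqref{N^i_NF}. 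For $i = 7$, the corresponding computation together with the intertwining relation \eqref{u^T_min, N^7_D, M_w} produces
\[
\Phi \begin{pmatrix} M_z & 0 \\ 0 & N^{(7)}_D \end{pmatrix} \Phi^* = \begin{pmatrix} M_z & 0 \\ 0 & M_\omega|_{\overline{\triangle_{T_7} L^2(\mathcal{D}_{T_7})}} \end{pmatrix} = \begin{pmatrix} M_z & 0 \\ 0 & N^{(7)}_{NF} \end{pmatrix}.
\]

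Combining these, the unitary $\Phi$ intertwines the Douglas $7$-tuple appearing in \eqref{Douglas Model 1.1} with the Nagy-Foias $7$-tuple of the present theorem, and sends $\mathcal{H}^{\textbf{T}}_D$ onto $\mathcal{H}^{\textbf{T}}_{NF}$. Consequently, conjugation by $\Phi$ carries the compression to $\mathcal{H}^{\textbf{T}}_D$ to the compression to $\mathcal{H}^{\textbf{T}}_{NF}$, so the unitary equivalence supplied by Theorem \ref{Douglas Model 1} transfers verbatim to the Sz.-Nagy-Foias model.

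The only genuinely delicate point is verifying that $\Phi$ truly intertwines the $i$-th Douglas operator with the $i$-th Nagy-Foias operator for $1 \leqslant i \leqslant 6$; this amounts to confirming that conjugation by $u^{\textbf{T}}_{\min}$ sends $N^{(i)}_D$ to $N^{(i)}_{NF}$, but this is precisely the definition \eqref{N^i_NF}. Once this bookkeeping is in place, and one observes that the top-left blocks are left untouched by $\Phi$, the theorem follows immediately from Theorem \ref{Douglas Model 1} without any further computation. The commutativity side-condition on $\tilde{F}_1,\ldots,\tilde{F}_6$ is inherited directly from the Douglas model, since conjugation by a unitary preserves commutativity.
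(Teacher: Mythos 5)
Your proposal is correct and follows exactly the route the paper intends: the paper states that the theorem is a straightforward consequence of the relation $\Pi^{\textbf{T}}_{NF} = \bigl(\begin{smallmatrix} I & 0 \\ 0 & u^{\textbf{T}}_{\min}\end{smallmatrix}\bigr)\Pi^{\textbf{T}}_D$ together with Theorem \ref{Douglas Model 1}, which is precisely the conjugation-by-$\Phi$ argument you carry out. You have merely made explicit the bookkeeping (the definition \eqref{N^i_NF} and the intertwining \eqref{u^T_min, N^7_D, M_w}) that the paper leaves implicit.
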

	
	It is a fact that every pure contraction is c.n.u. In case of $T_7$ is pure, we note that $\Theta_{T_7}$ is an inner function, which implies that $M_{\Theta_{T_7}}$ is an isometry. On the other hand, $\mathcal{Q}_{T^*_7} = 0$ and hence $\overline{\triangle_{T_7} L^2(\mathcal{D}_{T_7})} = 0$. This implies that $\mathcal{H}^{\textbf{T}}_{NF} = H^2(\mathcal{D}_{T^*_7}) \ominus \Theta_{T_7}H^2(\mathcal{D}_{T_7}) = \mathcal{H}_{T_7}$ [See Section $3$, \cite{apal3}]. Thus, for this case, Nagy-Foias model for c.n.u. $\Gamma_{E(3; 3; 1, 1, 1)}$-contraction reduced to the functional model for pure $\Gamma_{E(3; 3; 1, 1, 1)}$-contraction. We only state the model in the following theorem.
	
	\begin{thm}[Theorem $3.2$, \cite{apal3}]\label{Thm 7}
		Let $\textbf{T} = (T_1, \dots, T_7)$ be a pure $\Gamma_{E(3; 3; 1, 1, 1)}$-contraction on a Hilbert space $\mathcal{H}$. Suppose that $\tilde{F}_i, 1\leq i \leq 6$ are fundamental operators of $\textbf{T}^* = (T^*_1, \dots, T^*_7)$. Then
		\begin{enumerate}\label{Model 1}
			\item $T_i$  is unitarily equivalent to  $P_{\mathcal{H}_{T_7}}(I \otimes\tilde{ F}^*_i + M_z \otimes \tilde{F}_{7-i})_{|_{\mathcal{H}_{T_7}}}$ for $1 \leqslant i \leqslant 6$, and 
			
			\item $T_7$  is unitarily equivalent to $P_{\mathcal{H}_{T_7}}(M_z \otimes I_{\mathcal{D}_{T^*_7}})_{|_{\mathcal{H}_{T_7}}}$,
		\end{enumerate} where $\mathcal{H}_{T_7} =
		(H^2(\mathbb{D}) \otimes \mathcal{D}_{T^*_7}) \ominus M_{\Theta_{T_7}}(H^2(\mathbb{D}) \otimes \mathcal{D}_{T_7}).$
	\end{thm}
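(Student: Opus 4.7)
The plan is to exhibit an explicit unitary $W\colon\mathcal{H}\to\mathcal{H}_{T_7}$ that simultaneously intertwines $T_7$ with the compressed shift and each $T_i$ with the compressed multiplier $P_{\mathcal{H}_{T_7}}(I\otimes\tilde{F}^*_i+M_z\otimes\tilde{F}_{7-i})|_{\mathcal{H}_{T_7}}$, and to show that the candidate operators leave $\mathcal{H}_{T_7}^{\perp}$ invariant so that the compression is the right object. The natural choice is the isometry defined in \eqref{W},
\[
Wh=\sum_{n\geqslant 0}z^{n}\otimes D_{T^{*}_{7}}T^{*n}_{7}h.
\]
Because $T_7$ is pure, Lemma \ref{Lem 1} together with the fact that $M_{\Theta_{T_7}}$ is an isometry shows $\Ran W=\mathcal{H}_{T_7}$, so $W$ regarded as a map onto its range is a unitary $\mathcal{H}\to\mathcal{H}_{T_7}$ with $WW^{*}=P_{\mathcal{H}_{T_7}}$ and $W^{*}W=I_{\mathcal{H}}$.

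Next I would establish the two intertwining identities $WT^{*}_{7}=M^{*}_{z}W$ and $WT^{*}_{i}=M^{*}_{\tilde{F}^{*}_{i}+\tilde{F}_{7-i}z}W$ for $1\leqslant i\leqslant 6$. The first is just a reindexing of the sum, as already noted in \eqref{Ad 1.1}. For the second I apply Proposition \ref{FiFj} to $\textbf{T}^{*}$, which gives
\[
D_{T^{*}_{7}}T^{*}_{i}=\tilde{F}_{i}D_{T^{*}_{7}}+\tilde{F}^{*}_{7-i}D_{T^{*}_{7}}T^{*}_{7},\qquad 1\leqslant i\leqslant 6,
\]
then insert this identity term by term into the series defining $WT^{*}_{i}h$, using $T_iT_7=T_7T_i$ to move $T^{*}_{i}$ past $T^{*n}_{7}$. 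The first piece produces $(I\otimes\tilde{F}_{i})Wh$, and the second piece produces $(I\otimes\tilde{F}^{*}_{7-i})M^{*}_{z}Wh$ after recognising the shifted series. Since $M^{*}_{\tilde{F}^{*}_{i}+\tilde{F}_{7-i}z}=I\otimes\tilde{F}_{i}+M^{*}_{z}\otimes\tilde{F}^{*}_{7-i}$, the desired identity follows. Taking adjoints gives $T_{i}=W^{*}M_{\tilde{F}^{*}_{i}+\tilde{F}_{7-i}z}W$ and $T_{7}=W^{*}(M_{z}\otimes I_{\mathcal{D}_{T^{*}_{7}}})W$.

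To upgrade these identities to the stated compressed form I would check that $\mathcal{H}_{T_7}$ is co-invariant under $M_{\tilde{F}^{*}_{i}+\tilde{F}_{7-i}z}$, equivalently that $M_{\Theta_{T_7}}(H^{2}\otimes\mathcal{D}_{T_7})$ is invariant under it. This is exactly the content of the intertwining identity \eqref{Admissible 1} from Theorem \ref{Thm 2}, applied in the forward direction, which gives $M_{\tilde{F}^{*}_{i}+\tilde{F}_{7-i}z}M_{\Theta_{T_7}}=M_{\Theta_{T_7}}M_{F_{i}+F^{*}_{7-i}z}$. Once the co-invariance is verified, the identity $W T_{i}h=WW^{*}M_{\tilde{F}^{*}_{i}+\tilde{F}_{7-i}z}Wh=P_{\mathcal{H}_{T_7}}M_{\tilde{F}^{*}_{i}+\tilde{F}_{7-i}z}|_{\mathcal{H}_{T_7}}Wh$ delivers (1), while an analogous (and easier) identity for $M_{z}\otimes I$ delivers (2).

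The main obstacle is the second step: one must be careful that the fundamental operators $\tilde{F}_i$ obtained from $\textbf{T}^{*}$ are exactly the ones appearing in the intertwining with $W$, which is where Proposition \ref{FiFj} is essential. The rest is a bookkeeping check on series manipulation and the use of Theorem \ref{Thm 2} to guarantee the compression is well defined on $\mathcal{H}_{T_7}$; no deep estimate is needed beyond purity of $T_7$.
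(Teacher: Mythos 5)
Your argument is correct, but it is worth noting that the paper does not actually prove Theorem \ref{Thm 7}: it is quoted from [Theorem 3.2, \cite{apal3}] and presented as the degenerate case of the Sz.-Nagy--Foias model (Theorem \ref{Nagy-Foias Model 1}), obtained by observing that purity of $T_7$ forces $\mathcal{Q}_{T^*_7}=0$ and $\overline{\triangle_{T_7}L^2(\mathcal{D}_{T_7})}=0$, so that the second component of the model space disappears and $\mathcal{H}^{\textbf{T}}_{NF}=\mathcal{H}_{T_7}$. Your direct proof is essentially the same computation that the paper carries out elsewhere: the identity $WT^*_7=M^*_zW$ is \eqref{Ad 1.1}, the fact that $\Ran W=\mathcal{H}_{T_7}$ for pure $T_7$ follows from Lemma \ref{Lem 1} exactly as you say, and your key intertwining $WT^*_i=M^*_{\tilde F^*_i+\tilde F_{7-i}z}W$ is precisely the content of \eqref{DM 1.3} in the proof of the Douglas model (Theorem \ref{Douglas Model 1}), derived there too from Proposition \ref{FiFj} applied to $\textbf{T}^*$ (equation \eqref{DM 1.5}); in the pure case $\mathcal{O}_{D_{T^*_7},T^*_7}$ is just your $W$. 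So you have, in effect, reassembled the paper's own machinery into a self-contained proof of the special case, which is a perfectly legitimate and arguably cleaner route. One small simplification: the co-invariance of $\mathcal{H}_{T_7}$ under $M_{\tilde F^*_i+\tilde F_{7-i}z}$ (via \eqref{Admissible 1}) is not needed to obtain the stated unitary equivalence, since $WT_ih=WW^*M_{\tilde F^*_i+\tilde F_{7-i}z}Wh=P_{\mathcal{H}_{T_7}}M_{\tilde F^*_i+\tilde F_{7-i}z}\big|_{\mathcal{H}_{T_7}}Wh$ already follows from $WW^*=P_{\mathcal{H}_{T_7}}$ and $Wh\in\mathcal{H}_{T_7}$; co-invariance only becomes relevant if one wants the compression to be multiplicative on polynomials.
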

	
	Since, for the case $T_7$ pure, $\mathcal{Q}_{T^*_7} = 0$, then $\mathcal{H}^{\textbf{T}}_D = \Ran \mathcal{O}_{D_{T^*_7}, T^*_7}$. Therefore, the Douglas model for $\Gamma_{E(3; 3; 1, 1, 1)}$-contraction reduced to the following one.
	
	\begin{thm}\label{Thm 7A}
		Let $\textbf{T} = (T_1, \dots, T_7)$ be a pure $\Gamma_{E(3; 3; 1, 1, 1)}$-contraction on a Hilbert space $\mathcal{H}$. Suppose that $\tilde{F}_i, 1\leq i \leq 6$ are fundamental operators of $\textbf{T}^* = (T^*_1, \dots, T^*_7)$. Then
		\begin{enumerate}\label{Model 1A}
			\item $T_i$  is unitarily equivalent to  $P_{\Ran \mathcal{O}_{D_{T^*_7}, T^*_7}}(I \otimes\tilde{ F}^*_i + M_z \otimes \tilde{F}_{7-i})_{|_{\Ran \mathcal{O}_{D_{T^*_7}, T^*_7}}}$ for $1 \leqslant i \leqslant 6$, and 
			
			\item $T_7$  is unitarily equivalent to $P_{\Ran \mathcal{O}_{D_{T^*_7}, T^*_7}}(M_z \otimes I_{\mathcal{D}_{T^*_7}})_{|_{\Ran \mathcal{O}_{D_{T^*_7}, T^*_7}}}$,
		\end{enumerate}
	\end{thm}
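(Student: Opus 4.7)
The plan is to deduce Theorem \ref{Thm 7A} as a direct specialization of the Douglas-type model established in Theorem \ref{Douglas Model 1}. The key observation is that purity of $T_7$ forces the unitary component of the canonical construction to vanish, so that the block-diagonal model of Theorem \ref{Douglas Model 1} collapses to a single multiplication block compressed to the range of the observability operator $\mathcal{O}_{D_{T^*_7}, T^*_7}$.

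First I would recall from \eqref{Q_T^*_7} that the positive operator $Q_{T^*_7}$ is defined by $Q^2_{T^*_7} = \mathrm{SOT}\text{-}\lim_{n\to\infty} T^n_7 T^{*n}_7$. Since $T_7$ is pure, by definition $T^{*n}_7 h \to 0$ strongly as $n \to \infty$ for every $h \in \mathcal{H}$, hence $T^n_7 T^{*n}_7 \to 0$ in the strong operator topology and consequently $Q_{T^*_7} = 0$. This in turn forces $\overline{\Ran}\, Q_{T^*_7} = \{0\}$, so the Hilbert space $\mathcal{Q}_{T^*_7}$ carrying the canonical $\Gamma_{E(3;3;1,1,1)}$-unitary $\textbf{N} = (N^{(1)}_D,\dots,N^{(7)}_D)$ constructed in Section \ref{Canonical Construction} is trivial. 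Thus each $N^{(i)}_D$ acts on the zero space and can be discarded from the model.

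Next, I would specialize the Douglas isometric embedding
\[
\Pi^{\textbf{T}}_D = \begin{pmatrix} \mathcal{O}_{D_{T^*_7}, T^*_7} \\ Q_{T^*_7} \end{pmatrix}
\]
of Theorem \ref{Douglas Model 1}. With $Q_{T^*_7} = 0$, the second component vanishes, so $\Pi^{\textbf{T}}_D$ reduces to $\mathcal{O}_{D_{T^*_7}, T^*_7} : \mathcal{H} \to H^2(\mathcal{D}_{T^*_7})$. The isometry property verified in \eqref{Pi^T_D isometry} shows that $\mathcal{O}_{D_{T^*_7}, T^*_7}$ is itself an isometry from $\mathcal{H}$ onto $\Ran \mathcal{O}_{D_{T^*_7}, T^*_7}$, and the functional model space becomes $\mathcal{H}^{\textbf{T}}_D = \Ran \mathcal{O}_{D_{T^*_7}, T^*_7} \subset H^2(\mathcal{D}_{T^*_7})$.

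Finally, plugging these simplifications into \eqref{Douglas Model 1.1} of Theorem \ref{Douglas Model 1}, the block matrices collapse to their top-left entries, yielding that $T_i$ is unitarily equivalent to $P_{\Ran \mathcal{O}_{D_{T^*_7}, T^*_7}} M_{\tilde{F}^*_i + \tilde{F}_{7-i}z}\big|_{\Ran \mathcal{O}_{D_{T^*_7}, T^*_7}}$ for $1 \leqslant i \leqslant 6$ and that $T_7$ is unitarily equivalent to $P_{\Ran \mathcal{O}_{D_{T^*_7}, T^*_7}} M_z\big|_{\Ran \mathcal{O}_{D_{T^*_7}, T^*_7}}$. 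Identifying $M_{\tilde{F}^*_i + \tilde{F}_{7-i}z}$ with $I \otimes \tilde{F}^*_i + M_z \otimes \tilde{F}_{7-i}$ and $M_z$ with $M_z \otimes I_{\mathcal{D}_{T^*_7}}$ on $H^2(\mathbb{D}) \otimes \mathcal{D}_{T^*_7} \cong H^2(\mathcal{D}_{T^*_7})$ completes the proof. The only point requiring care is verifying that purity indeed gives $Q_{T^*_7} = 0$ and that this legitimately removes the unitary summand; all remaining content is a direct quotation of Theorem \ref{Douglas Model 1} under this simplification, so no substantial obstacle arises.
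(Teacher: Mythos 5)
Your proposal is correct and follows exactly the route the paper takes: the paper's own justification is the one-line observation that purity of $T_7$ gives $\mathcal{Q}_{T^*_7}=0$, so the Douglas model of Theorem \ref{Douglas Model 1} collapses to the single block on $\Ran\mathcal{O}_{D_{T^*_7},T^*_7}$. You merely spell out the verification that $Q_{T^*_7}=0$ and the identification $H^2(\mathcal{D}_{T^*_7})\cong H^2(\mathbb{D})\otimes\mathcal{D}_{T^*_7}$, which the paper leaves implicit.
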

	
	Analogously, we describe the Nagy-Foias model for c.n.u. $\Gamma_{E(3; 2; 1, 2)}$-contraction. Let us consider $\textbf{S} = (S_1, S_2, S_3, \tilde{S}_1, \tilde{S}_2)$ is a $\Gamma_{E(3; 2; 1, 2)}$-contraction on a Hilbert space $\mathcal{H}$ such that $S_3$ is c.n.u. contraction. Consider the function
	\begin{equation}\label{Delta_{S_3}}
		\begin{aligned}
			\triangle_{S_3}(\omega) &=
			(I - \Theta_{S_3}(\omega)^*\Theta_{S_3}(\omega))^{1/2}.
		\end{aligned}
	\end{equation}
	Due to Sz.-Nagy and Foias,
	\begin{equation}\label{V^S_NF}
		\begin{aligned}
			V^{\textbf{S}}_{NF} &:=
			\begin{pmatrix}
				M_z & 0\\
				0 & M_{\omega}|_{\overline{\triangle_{S_3} L^2(\mathcal{D}_{S_3})}}
			\end{pmatrix} :
			\begin{pmatrix}
				H^2(\mathcal{D}_{S^*_3})\\
				\overline{\triangle_{S_3} L^2(\mathcal{D}_{S_3})}
			\end{pmatrix} \to
			\begin{pmatrix}
				H^2(\mathcal{D}_{S^*_3})\\
				\overline{\triangle_{S_3} L^2(\mathcal{D}_{S_3})}
			\end{pmatrix}
		\end{aligned}
	\end{equation}
	is a minimal isometric dilation of $S_3$ via the isometric embedding
	\begin{equation}\label{Pi^S_NF}
		\begin{aligned}
			\Pi^{\textbf{S}}_{NF} : \mathcal{H} \to
			\begin{pmatrix}
				H^2(\mathcal{D}_{S^*_3})\\
				\overline{\triangle_{S_3} L^2(\mathcal{D}_{S_3})}
			\end{pmatrix}
		\end{aligned}
	\end{equation}
	such that
	\begin{equation}\label{H^S_NF}
		\begin{aligned}
			\mathcal{H}^{\textbf{S}}_{NF} &= \Ran \Pi^{\textbf{S}}_{NF} =
			\begin{pmatrix}
				H^2(\mathcal{D}_{S^*_3})\\
				\overline{\triangle_{S_3} L^2(\mathcal{D}_{S_3})}
			\end{pmatrix} \ominus
			\begin{pmatrix}
				\Theta_{S_3}\\
				\triangle_{S_3}
			\end{pmatrix}
			H^2(\mathcal{D}_{S_3}).
		\end{aligned}
	\end{equation}
	Now we define $M^{(1)}_{NF}, M^{(2)}_{NF}, \tilde{M}^{(1)}_{NF}, \tilde{M}^{(2)}_{NF}$ on $\overline{\triangle_{S_3} L^2(\mathcal{D}_{S_3})}$ by
	\begin{equation}\label{M^i_NF, M^i_NF tilda}
		\begin{aligned}
			M^{(i)}_{NF} &= u^{\textbf{S}}_{\min}M^{(i)}_Du^{\textbf{S}*}_{\min}, \,\, \text{and} \,\, \tilde{M}^{(j)}_{NF} = u^{\textbf{S}}_{\min}\tilde{M}^{(j)}_Du^{\textbf{S}*}_{\min} \,\, \text{for} \,\, 1 \leqslant i, j \leqslant 2.
		\end{aligned}
	\end{equation}
	Similar to the case of $\Gamma_{E(3; 3; 1, 1, 1)}$-contraction, there exists a unitary $u^{\textbf{S}}_{\min} : \mathcal{Q}_{S^*_3} \to \overline{\triangle_{S_3} L^2(\mathcal{D}_{S_3})}$ such that
	\begin{equation}\label{u^S_min, M^3_D, M_w}
		\begin{aligned}
			u^{\textbf{S}}_{\min}M^{(3)}_D &= M_{\omega}|_{\overline{\triangle_{S_3} L^2(\mathcal{D}_{S_3})}}u^{\textbf{S}}_{\min}
		\end{aligned}
	\end{equation}
	$$\rm{and}$$
	\begin{equation}\label{Pi^S_NF, Pi^S_D}
		\begin{aligned}
			\Pi^{\textbf{S}}_{NF} &=
			\begin{pmatrix}
				I_{H^2(\mathcal{D}_{S^*_3})} & 0\\
				0 & u^{\textbf{S}}_{\min}
			\end{pmatrix}
			\Pi^{\textbf{S}}_D.
		\end{aligned}
	\end{equation}
	Therefore, the following theorem on functional on functional model for c.n.u. $\Gamma_{E(3; 2; 1, 2)}$-contraction is straightforward consequence of \eqref{Pi^S_NF, Pi^S_D} and Theorem \ref{Douglas Model 2}.
	
	\begin{thm}[Sz.-Nagy-Foias Model for C.N.U. $\Gamma_{E(3; 2; 1, 2)}$-Contraction]\label{Nagy-Foias Model 2}
		Let $\textbf{S} = (S_1, S_2, S_3, \tilde{S}_1, \tilde{S}_2)$ be a c.n.u. $\Gamma_{E(3; 2; 1, 2)}$-contraction on a Hilbert space $\mathcal{H}$ and $\hat{G}_1, 2\hat{G}_2, 2\hat{\tilde{G}}_1, \hat{\tilde{G}}_2$ be the fundamental operators of $\textbf{S}^*$. Then $\textbf{S}$ is unitarily equivalent to
		\begin{equation}
			\begin{aligned}
				&P_{\mathcal{H}^{\textbf{S}}_{NF}}
				\left(
				\begin{pmatrix}
					M_{\hat{G}^*_1 + \hat{\tilde{G}}_2z} & 0\\
					0 & M^{(1)}_{NF}
				\end{pmatrix},
				\begin{pmatrix}
					M_{2\hat{G}^*_2 + 2\hat{\tilde{G}}_1z} & 0\\
					0 & M^{(2)}_{NF}
				\end{pmatrix},
				\begin{pmatrix}
					M_z & 0\\
					0 & M^{(3)}_{NF}
				\end{pmatrix}, \right.\\
				&\left. \hspace{2cm}
				\begin{pmatrix}
					M_{2\hat{\tilde{G}}^*_1 + 2\hat{G}_2z} & 0\\
					0 & \tilde{M}^{(1)}_{NF}
				\end{pmatrix}
				\begin{pmatrix}
					M_{\hat{\tilde{G}}^*_2 + \hat{G}_1z} & 0\\
					0 & \tilde{M}^{(2)}_{NF}
				\end{pmatrix}
				\right)\Bigg|_{\mathcal{H}^{\textbf{S}}_{NF}}
			\end{aligned}
		\end{equation}
		($\hat{G}_1, 2\hat{G}_2, 2\hat{\tilde{G}}_1, \hat{\tilde{G}}_2$ satisfy \eqref{Commutative B(i)} and \eqref{Commutative B(ii)} when \eqref{Douglas Model 2.1} is commutative) where $\mathcal{H}^{\textbf{S}}_{NF}$ is the functional model space of $\textbf{S}$ defined by
		\begin{equation}
			\begin{aligned}
				\mathcal{H}^{\textbf{S}}_{NF} &:=
				\Ran \Pi^{\textbf{S}}_{NF} =
				\begin{pmatrix}
					H^2(\mathcal{D}_{S^*_3})\\
					\overline{\triangle_{S_3} L^2(\mathcal{D}_{S_3})}
				\end{pmatrix} \ominus
				\begin{pmatrix}
					\Theta_{S_3}\\
					\triangle_{S_3}
				\end{pmatrix}
				H^2(\mathcal{D}_{S_3}).
			\end{aligned}
		\end{equation}
	\end{thm}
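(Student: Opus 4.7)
The plan is to transport the Douglas type functional model from Theorem \ref{Douglas Model 2} to the Nagy–Foias type model by means of the unitary identification provided in \eqref{Pi^S_NF, Pi^S_D}, exactly as in the argument following Theorem \ref{Nagy-Foias Model 1}. More precisely, set
\[
U :=
\begin{pmatrix}
I_{H^2(\mathcal{D}_{S^*_3})} & 0 \\
0 & u^{\textbf{S}}_{\min}
\end{pmatrix} :
\begin{pmatrix}
H^2(\mathcal{D}_{S^*_3}) \\ \mathcal{Q}_{S^*_3}
\end{pmatrix}
\longrightarrow
\begin{pmatrix}
H^2(\mathcal{D}_{S^*_3}) \\ \overline{\triangle_{S_3} L^2(\mathcal{D}_{S_3})}
\end{pmatrix},
\]
which is unitary because $u^{\textbf{S}}_{\min}$ is. By \eqref{Pi^S_NF, Pi^S_D} we have $\Pi^{\textbf{S}}_{NF} = U \Pi^{\textbf{S}}_{D}$, so $U$ carries $\operatorname{Ran} \Pi^{\textbf{S}}_{D} = \mathcal{H}^{\textbf{S}}_D$ onto $\operatorname{Ran} \Pi^{\textbf{S}}_{NF} = \mathcal{H}^{\textbf{S}}_{NF}$.

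Next, I would conjugate each component of the Douglas model tuple by $U$. The upper block $M_{\hat{G}^*_1 + \hat{\tilde{G}}_2 z}$ (and the analogous entries for the other four operators, as well as $M_z$) is untouched because the $(1,1)$ entry of $U$ is the identity. For the lower blocks, the defining relation \eqref{M^i_NF, M^i_NF tilda} gives $u^{\textbf{S}}_{\min} M^{(i)}_D u^{\textbf{S}*}_{\min} = M^{(i)}_{NF}$ for $i = 1,2$ and $u^{\textbf{S}}_{\min} \tilde{M}^{(j)}_D u^{\textbf{S}*}_{\min} = \tilde{M}^{(j)}_{NF}$ for $j=1,2$, while \eqref{u^S_min, M^3_D, M_w} yields $u^{\textbf{S}}_{\min} M^{(3)}_D u^{\textbf{S}*}_{\min} = M_{\omega}|_{\overline{\triangle_{S_3} L^2(\mathcal{D}_{S_3})}}$. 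Thus $U$ conjugates the Douglas $\Gamma_{E(3;2;1,2)}$-isometry coordinate-wise to the tuple appearing in the statement. Since the compression to the respective model spaces is preserved under such a unitary equivalence, Theorem \ref{Douglas Model 2} immediately transfers to the Nagy–Foias side.

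Finally, I would identify the model space: using the standard Nagy–Foias description of $\operatorname{Ran}\Pi^{\textbf{S}}_{NF}$ recalled in \eqref{H^S_NF}, the image of $\mathcal{H}^{\textbf{S}}_D$ under $U$ is exactly
\[
\begin{pmatrix}
H^2(\mathcal{D}_{S^*_3}) \\ \overline{\triangle_{S_3} L^2(\mathcal{D}_{S_3})}
\end{pmatrix} \ominus
\begin{pmatrix}
\Theta_{S_3} \\ \triangle_{S_3}
\end{pmatrix}
H^2(\mathcal{D}_{S_3}),
\]
which is the claimed $\mathcal{H}^{\textbf{S}}_{NF}$. The parenthetical commutativity remark follows because $U$ is a unitary equivalence, so the four admissibility relations \eqref{Commutative B(i)}–\eqref{Commutative B(ii)} imposed on $\hat G_1, 2\hat G_2, 2\hat{\tilde{G}}_1, \hat{\tilde{G}}_2$ in the Douglas model pass unchanged to the Nagy–Foias model. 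No genuine obstacle is expected; the only mildly delicate point is verifying that the intertwining \eqref{u^S_min, M^3_D, M_w} together with \eqref{M^i_NF, M^i_NF tilda} is enough to ensure that $U$ intertwines the full five-tuple on the model spaces, which follows from the minimality of the unitary dilation $M^{(3)}_D$ (as in \eqref{CCUE 5} of Theorem \ref{Thm 5}) and Fuglede's theorem applied to the commuting normal tuple $\textbf{M}$.
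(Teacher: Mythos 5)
Your proposal is correct and follows essentially the same route as the paper, which derives this theorem as a direct consequence of the identity $\Pi^{\textbf{S}}_{NF} = \bigl(\begin{smallmatrix} I & 0 \\ 0 & u^{\textbf{S}}_{\min} \end{smallmatrix}\bigr)\Pi^{\textbf{S}}_D$ together with Theorem \ref{Douglas Model 2}. Your write-up simply makes explicit the conjugation by the block-diagonal unitary and the resulting identification of model spaces and model tuples, which is exactly what the paper's one-line justification is implicitly invoking.
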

	
	In case of $S_3$ a pure contraction, $\Theta_{S_3}$ is an inner function and hence, $M_{\Theta_{S_3}}$ is an isometry. Since, $S_3$ is pure it is clear that $\mathcal{Q}_{S^*_3} = 0$ and thus, $\overline{\triangle_{S_3} L^2(\mathcal{D}_{S_3})} = 0$ [See Section $3$, \cite{apal3}]. It therefore implies that $\mathcal{H}^{\textbf{S}}_{NF} = H^2(\mathcal{D}_{S^*_3}) \ominus \Theta_{S_3}H^2(\mathcal{D}_{S_3}) = \mathcal{H}_{S_3}$. Therefore, the functional model for pure $\Gamma_{E(3; 2; 1, 2)}$-contraction is immediate from Nagy-Foias model for c.n.u. $\Gamma_{E(3; 2; 1, 2)}$-contraction.
	
	Let $\hat{A}_1= P_{\mathcal{H}_{S_3}}(I \otimes \hat{G}^*_1 + M_z \otimes \hat{ \tilde{G}}_2)_{|_{\mathcal{H}_{S_3}}}, \hat{A}_2=P_{\mathcal{H}_{S_3}}(I \otimes 2\hat{G}^*_2 + M_z \otimes 2\hat{\tilde{G}}_1)_{|_{\mathcal{H}_{S_3}}}, \hat{A}_3=P_{\mathcal{H}_{S_3}}(M_z \otimes I_{\mathcal{D}_{S^*_3}})_{|_{\mathcal{H}_{S_3}}},\hat{B}_1=P_{\mathcal{H}_{S_3}}(I \otimes \hat{ \tilde{G}}^*_2 + M_z \otimes \hat{G}_1)_{|_{\mathcal{H}_{S_3}}},\hat{B}_2=P_{\mathcal{H}_{S_3}}(I \otimes \hat{ \tilde{G}}^*_2 + M_z \otimes \hat{G}_1)_{|_{\mathcal{H}_{S_3}}},$ where $\mathcal{H}_{S_3} =(H^2(\mathbb{D}) \otimes \mathcal{D}_{S^*_3}) \ominus M_{\Theta_{S_3}}(H^2(\mathbb{D}) \otimes \mathcal{D}_{S_3}).$ We then only state the following theorem from \cite{apal3}.
	
	\begin{thm}[Theorem $3.6$, \cite{apal3}]\label{Thm 8}
		Let $\textbf{S} = (S_1, S_2, S_3, \tilde{S}_1, \tilde{S}_2)$ be a pure $\Gamma_{E(3; 2; 1, 2)}$-contraction on a Hilbert space $\mathcal{H}$. Let $\hat{G}_1, 2\hat{G}_2, 2\hat{\tilde{G}}_1, \hat{\tilde{G}}_2$ be fundamental operators for $\textbf{S}^* = (S^*_1, S^*_2, S^*_3, \tilde{S}^*_1, \tilde{S}^*_2)$. Then 
		\begin{enumerate}\label{Model 2}
			\item $S_1$ is unitarily equivalent to $\hat{A}_1$,
			
			\item $S_2$  is unitarily equivalent to $\hat{A}_2$,
			
			\item $S_3$  is unitarily equivalent to $ \hat{A}_3$,
			
			\item $\tilde{S}_1$  is unitarily equivalent to $\hat{B}_1$,
			
			\item $\tilde{S}_2$  is unitarily equivalent to $\hat{B}_2$.
		\end{enumerate}
	\end{thm}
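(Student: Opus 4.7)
The plan is to specialize the Douglas-type construction from Theorem \ref{Douglas Model 2} to the pure case, where the canonical $\Gamma_{E(3;2;1,2)}$-unitary disappears and the embedding lands inside the Sz.-Nagy--Foias model space $\mathcal{H}_{S_3}$. First I would observe that purity of $S_3$ forces $Q_{S^*_3} = 0$ in \eqref{Q_S^*_3}, so the entire canonical unitary component constructed in Subsection \ref{Canonical Construction 2} is trivial; consequently the Douglas embedding reduces to the isometry $\tilde{W}\colon \mathcal{H} \to H^2(\mathbb{D}) \otimes \mathcal{D}_{S^*_3}$ of \eqref{W tilde}. Next, purity of $S_3$ implies $\Theta_{S_3}$ is inner, so $M_{\Theta_{S_3}}$ is an isometry; combined with Lemma \ref{Lem 2}, this forces $\Ran \tilde{W} = (H^2(\mathbb{D}) \otimes \mathcal{D}_{S^*_3}) \ominus M_{\Theta_{S_3}}(H^2(\mathbb{D}) \otimes \mathcal{D}_{S_3}) = \mathcal{H}_{S_3}$, so $\tilde{W}$ is a unitary from $\mathcal{H}$ onto $\mathcal{H}_{S_3}$.

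Second, I would derive the intertwining relations. Applying Proposition \ref{s1s3} to $\textbf{S}^* = (S^*_1, S^*_2, S^*_3, \tilde{S}^*_1, \tilde{S}^*_2)$, whose fundamental operators are $\hat{G}_1, 2\hat{G}_2, 2\hat{\tilde{G}}_1, \hat{\tilde{G}}_2 \in \mathcal{B}(\mathcal{D}_{S^*_3})$, yields defect identities of the form
\[
D_{S^*_3} S^*_1 = \hat{G}_1 D_{S^*_3} + \hat{\tilde{G}}^*_2 D_{S^*_3} S^*_3,
\]
together with the analogous equations for $S^*_2/2$, $\tilde{S}^*_1/2$ and $\tilde{S}^*_2$. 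Since $S^*_3$ is a contraction, $(I - z S^*_3)$ is invertible on $\mathbb{D}$, and multiplying these identities on the right by $(I - z S^*_3)^{-1}$ and using commutativity with $S^*_3$ produces, exactly as in the passage \eqref{DM 1.5}--\eqref{DM 1.7} of the proof of Theorem \ref{Douglas Model 1},
\[
M^*_{\hat{G}^*_1 + \hat{\tilde{G}}_2 z}\,\mathcal{O}_{D_{S^*_3}, S^*_3} = \mathcal{O}_{D_{S^*_3}, S^*_3}\, S^*_1,
\]
and analogous intertwinings for $S_2, \tilde{S}_1, \tilde{S}_2$. Together with the obvious $M^*_z\,\mathcal{O}_{D_{S^*_3}, S^*_3} = \mathcal{O}_{D_{S^*_3}, S^*_3}\,S^*_3$, this gives $\tilde{W} S^*_i = M^*_{\phi_i}\,\tilde{W}$ for the appropriate symbols $\phi_i \in \{\hat{G}^*_1 + \hat{\tilde{G}}_2 z,\; 2\hat{G}^*_2 + 2\hat{\tilde{G}}_1 z,\; z,\; 2\hat{\tilde{G}}^*_1 + 2\hat{G}_2 z,\; \hat{\tilde{G}}^*_2 + \hat{G}_1 z\}$.

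Third, I would promote these intertwinings to a unitary equivalence with compressed multiplication operators. Since $\Ran \tilde{W} = \mathcal{H}_{S_3}$ is co-invariant under each $M_{\phi_i}$ (equivalently, $\Ran M_{\Theta_{S_3}}$ is invariant under $M_{\phi_i}$, which is the content of Theorem \ref{Thm 4} (\ref{1})--(\ref{4})), taking adjoints of the identities above gives $S_i = \tilde{W}^{-1}\bigl(P_{\mathcal{H}_{S_3}} M_{\phi_i}\big|_{\mathcal{H}_{S_3}}\bigr)\tilde{W}$. Matching the five symbols with the definitions of $\hat{A}_1, \hat{A}_2, \hat{A}_3, \hat{B}_1, \hat{B}_2$ completes the identification.

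The only delicate point is bookkeeping: the five operators $S_1, S_2, S_3, \tilde{S}_1, \tilde{S}_2$ involve eight fundamental operators $\hat{G}_1, \hat{G}_2, \hat{\tilde{G}}_1, \hat{\tilde{G}}_2$ and their adjoints in an asymmetric way, so one must verify the correct symbol $\phi_i$ arises for each of the four non-trivial intertwinings; the co-invariance step for $M_{\hat{\tilde{G}}^*_1 + \hat{G}_2 z}$ and $M_{\hat{\tilde{G}}^*_2 + \hat{G}_1 z}$ in particular relies on items (\ref{3}) and (\ref{4}) of Theorem \ref{Thm 4}, which require the full hypothesis on $\textbf{S}^*$ and not merely on $\textbf{S}$. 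Everything else is a direct specialization of the proof of Theorem \ref{Douglas Model 2}.
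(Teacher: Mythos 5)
Your proposal is correct and follows essentially the same route as the paper, which obtains this statement (quoted from \cite{apal3}) by observing that purity of $S_3$ forces $\mathcal{Q}_{S^*_3}=0$ so that the Douglas/Sz.-Nagy--Foias model space collapses to $\mathcal{H}_{S_3}$ and the second components of the model operators vanish; your write-up merely supplies the intertwining details via Proposition \ref{s1s3} applied to $\textbf{S}^*$ and the co-invariance from Theorem \ref{Thm 4}, exactly as in the proof of Theorem \ref{Douglas Model 1}. Your symbol bookkeeping is in fact more careful than the paper's own display, where $\hat{B}_1$ and $\hat{B}_2$ are accidentally defined by the same expression; the symbol for $\tilde{S}_1$ should be $2\hat{\tilde{G}}^*_1 + 2\hat{G}_2 z$, as you have it.
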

	
	When $S_3$ is pure then $\mathcal{Q}_{S^*_3} = 0$ and hence, $\mathcal{H}^{\textbf{S}}_D = \Ran \mathcal{O}_{D_{S^*_3}, S^*_3}$. Suppose that\\
	$\tilde{A}_1= P_{\Ran \mathcal{O}_{D_{S^*_3}, S^*_3}}(I \otimes \hat{G}^*_1 + M_z \otimes \hat{ \tilde{G}}_2)_{|_{\Ran \mathcal{O}_{D_{S^*_3}, S^*_3}}}, \tilde{A}_2=P_{\Ran \mathcal{O}_{D_{S^*_3}, S^*_3}}(I \otimes 2\hat{G}^*_2 + M_z \otimes 2\hat{\tilde{G}}_1)_{|_{\Ran \mathcal{O}_{D_{S^*_3}, S^*_3}}}, \tilde{A}_3=P_{\Ran \mathcal{O}_{D_{S^*_3}, S^*_3}}(M_z \otimes I_{\mathcal{D}_{S^*_3}})_{|_{\Ran \mathcal{O}_{D_{S^*_3}, S^*_3}}}, \tilde{B}_1=P_{\Ran \mathcal{O}_{D_{S^*_3}, S^*_3}}(I \otimes \hat{ \tilde{G}}^*_2 + M_z \otimes \hat{G}_1)_{|_{\Ran \mathcal{O}_{D_{S^*_3}, S^*_3}}},\tilde{B}_2=P_{\Ran \mathcal{O}_{D_{S^*_3}, S^*_3}}(I \otimes \hat{ \tilde{G}}^*_2 + M_z \otimes \hat{G}_1)_{|_{\Ran \mathcal{O}_{D_{S^*_3}, S^*_3}}}$. Consequently, the Douglas model for $\Gamma_{E(3; 2; 1, 2)}$-contraction is reduced to the following one.
	
	\begin{thm}\label{Thm 8A}
		Let $\textbf{S} = (S_1, S_2, S_3, \tilde{S}_1, \tilde{S}_2)$ be a pure $\Gamma_{E(3; 2; 1, 2)}$-contraction on a Hilbert space $\mathcal{H}$. Let $\hat{G}_1, 2\hat{G}_2, 2\hat{\tilde{G}}_1,\\ \hat{\tilde{G}}_2$ be fundamental operators for $\textbf{S}^* = (S^*_1, S^*_2, S^*_3, \tilde{S}^*_1, \tilde{S}^*_2)$. Then 
		\begin{enumerate}\label{Model 2A}
			\item $S_1$ is unitarily equivalent to $\tilde{A}_1$,
			
			\item $S_2$  is unitarily equivalent to $\tilde{A}_2$,
			
			\item $S_3$  is unitarily equivalent to $ \tilde{A}_3$,
			
			\item $\tilde{S}_1$  is unitarily equivalent to $\tilde{B}_1$,
			
			\item $\tilde{S}_2$  is unitarily equivalent to $\tilde{B}_2$.
		\end{enumerate}
	\end{thm}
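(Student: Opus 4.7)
The plan is to deduce Theorem \ref{Thm 8A} as a direct specialization of the Douglas-type model (Theorem \ref{Douglas Model 2}) to the pure case. The crucial observation is that when $S_3$ is a pure contraction, $S_3^{*n}\to 0$ strongly, so from \eqref{Q_S^*_3} we get $Q_{S^*_3}=0$ and consequently $\mathcal{Q}_{S^*_3}=0$. Thus the canonical $\Gamma_{E(3;2;1,2)}$-unitary component $(M^{(1)}_D, M^{(2)}_D, M^{(3)}_D, \tilde{M}^{(1)}_D, \tilde{M}^{(2)}_D)$ acts on the trivial space and drops out of the model entirely.

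First I would invoke Theorem \ref{Douglas Model 2} to obtain the Douglas isometric embedding $\Pi^{\textbf{S}}_D = \begin{pmatrix}\mathcal{O}_{D_{S^*_3},S^*_3}\\ Q_{S^*_3}\end{pmatrix}$ of $\mathcal{H}$ into $\begin{pmatrix}H^2(\mathcal{D}_{S^*_3})\\\mathcal{Q}_{S^*_3}\end{pmatrix}$ together with the unitary equivalence of $\textbf{S}$ to the compression described in \eqref{Douglas Model 2.1}. Setting $Q_{S^*_3}=0$, the embedding collapses to $\Pi^{\textbf{S}}_D=\mathcal{O}_{D_{S^*_3},S^*_3}\colon\mathcal{H}\to H^2(\mathcal{D}_{S^*_3})$, and the Douglas model space becomes $\mathcal{H}^{\textbf{S}}_D=\Ran\Pi^{\textbf{S}}_D=\Ran\mathcal{O}_{D_{S^*_3},S^*_3}$. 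Simultaneously, each $2\times 2$ block-diagonal operator in \eqref{Douglas Model 2.1} has its lower-right entry acting on the zero space, so it collapses to the multiplication operator in its upper-left entry acting on $H^2(\mathcal{D}_{S^*_3})$.

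Next I would read off the five identifications one by one. The $S_1$-component of the Douglas model reduces to $P_{\Ran\mathcal{O}_{D_{S^*_3},S^*_3}}(M_{\hat{G}^*_1+\hat{\tilde{G}}_2 z})\big|_{\Ran\mathcal{O}_{D_{S^*_3},S^*_3}}$, which is the definition of $\tilde{A}_1$; the same argument yields $\tilde{A}_2,\tilde{A}_3,\tilde{B}_1,\tilde{B}_2$ for $S_2,S_3,\tilde{S}_1,\tilde{S}_2$ respectively. The commutativity hypotheses \eqref{Commutative B(i)} and \eqref{Commutative B(ii)} on $\hat{G}_1,2\hat{G}_2,2\hat{\tilde{G}}_1,\hat{\tilde{G}}_2$ are inherited from the fact that $\textbf{S}^*$ is a commuting $\Gamma_{E(3;2;1,2)}$-contraction, so Theorem \ref{Douglas Model 2} applies with no additional assumption.

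The main point to verify carefully is the degeneration of the block-diagonal operators: once $\mathcal{Q}_{S^*_3}=0$ I need to confirm that the unitary equivalence given by Theorem \ref{Douglas Model 2} genuinely identifies each of $S_1,S_2,S_3,\tilde{S}_1,\tilde{S}_2$ with the compression of the corresponding $M_{\cdot}$ to $\Ran\mathcal{O}_{D_{S^*_3},S^*_3}$ as defined in $\tilde{A}_i,\tilde{B}_j$. This is routine bookkeeping — the analogue of the reduction from Theorem \ref{Nagy-Foias Model 1} to Theorem \ref{Thm 7} carried out in the paragraph immediately preceding Theorem \ref{Thm 7A} — and no new analytic input is needed beyond Theorem \ref{Douglas Model 2}.
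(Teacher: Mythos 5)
Your proposal is correct and follows the same route as the paper: the paper likewise observes that purity of $S_3$ forces $Q_{S^*_3}=0$, hence $\mathcal{Q}_{S^*_3}=0$ and $\mathcal{H}^{\textbf{S}}_D=\Ran\mathcal{O}_{D_{S^*_3},S^*_3}$, so that the block operators in Theorem \ref{Douglas Model 2} collapse to their upper-left multiplication entries compressed to $\Ran\mathcal{O}_{D_{S^*_3},S^*_3}$, which are exactly $\tilde{A}_1,\tilde{A}_2,\tilde{A}_3,\tilde{B}_1,\tilde{B}_2$.
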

	
	\begin{rem}\label{Rem 2}
		From Theorem \ref{Thm 7} it is clear that a pure $\Gamma_{E(3; 3; 1, 1, 1)}$-contraction dilates to a pure $\Gamma_{E(3; 3; 1, 1, 1)}$-isometry. In a similar way, Theorem \ref{Thm 8} implies that a pure $\Gamma_{E(3; 2; 1, 2)}$-contraction dilates to a pure $\Gamma_{E(3; 2; 1, 2)}$-isometry.
	\end{rem}

	\section{Sch\"{a}ffer Type Isometric Dilation of $\Gamma_{E(3; 3; 1, 1, 1)}$-Contraction and $\Gamma_{E(3; 2; 1, 2)}$-Contraction: A Model Theoretic Approach}\label{Schaffer Type Model}
	
	Sch\"{a}ffer's construction of isometric dilation of a contraction can be found in \cite{Schaffer}. We develop a Sch\"{a}ffer type model for $\Gamma_{E(3; 3; 1, 1, 1)}$-isometric dilation and $\Gamma_{E(3; 2; 1, 2)}$-isometric dilation.
	
	Let
	\begin{equation}\label{V^i_Sc, V^7_Sc}
		\begin{aligned}
			V^{(i)}_{Sc} &=
			\begin{pmatrix}
				T_i & 0\\
				F^*_{7-i}D_{T_7} & M_{F_i + F^*_{7-i}z}
			\end{pmatrix} \,\, \text{for} \,\, 1 \leqslant i \leqslant 6 \,\, \text{and} \,\,
			V^{(7)}_{Sc} =
			\begin{pmatrix}
				T_7 & 0\\
				D_{T_7} & M_z
			\end{pmatrix}.
		\end{aligned}
	\end{equation}
	We demonstrate that $\textbf{V}_{Sc} = (V^{(1)}_{Sc}, \dots, V^{(6)}_{Sc}, V^{(7)}_{Sc})$ is a $\Gamma_{E(3; 3; 1, 1, 1)}$-isometric dilation on the model space
	$\begin{pmatrix}
		\mathcal{H}\\
		H^2(\mathcal{D}_{T_7})
	\end{pmatrix}$
	with respect to the isometry
	\begin{equation}\label{Pi^T_Sc}
		\begin{aligned}
			\Pi^{\textbf{T}}_{Sc} : \mathcal{H} \to 
			\begin{pmatrix}
				\mathcal{H}\\
				H^2(\mathcal{D}_{T_7})
			\end{pmatrix} \,\, \text{defined by} \,\,
			\Pi^{\textbf{T}}_{Sc} h = (h, 0) \,\, \text{for all} \,\, h \in \mathcal{H}.
		\end{aligned}
	\end{equation}
	
	\begin{thm}[Sch\"{a}ffer Isometric Dilation for $\Gamma_{E(3; 3; 1, 1, 1)}$-Contraction]\label{Schaffer Model 1}
		Let $\textbf{T} = (T_1, \dots, T_7)$ be a $\Gamma_{E(3; 3; 1, 1, 1)}$-contraction on a Hilbert space $\mathcal{H}$. Then
		\begin{equation}\label{Schaffer Model 1.1}
			\begin{aligned}
				\textbf{V}_{Sc} &= (V^{(1)}_{Sc}, \dots, V^{(6)}_{Sc}, V^{(7)}_{Sc})
			\end{aligned}
		\end{equation}
		is the $\Gamma_{E(3; 3; 1, 1, 1)}$-isometric (Sch\"{a}ffer type) dilation of $\textbf{T}$ on the model space
		$\begin{pmatrix}
			\mathcal{H}\\
			H^2(\mathcal{D}_{T_7})
		\end{pmatrix}$ satisfies
		\begin{equation}
			\begin{aligned}
				\Pi^{\textbf{T}}_{Sc}T^*_i &=
				V^{(i)*}_{Sc}\Pi^{\textbf{T}}_{Sc}
			\end{aligned}
		\end{equation}
		and that is uniquely determined by the operators $F_1, \dots, F_6 \in  \mathcal{B}(\mathcal{D}_{T_7})$ satisfying \eqref{Fundamental 1}, \eqref{Commutative 1} and $w(F_i + F^*_{7-i}z) \leqslant 1$ for $1 \leqslant i \leqslant 6$ and $z \in \mathbb{T}$.
		
		Conversely, let $\textbf{T} = (T_1, \dots, T_7)$ be commuting $7$-tuple of bounded operators, acting on a Hilbert space $\mathcal{H}$ such that $||T_i|| \leqslant 1$ and satisfies \eqref{Fundamental Eq}, \eqref{Commutative 1} for some $F_1, \dots, F_6 \in \mathcal{B}(\mathcal{D}_{T_7})$ with $w(F_i + F^*_{7-i}z) \leqslant 1$ for all $z \in \mathbb{T}$. Then $\textbf{T}$ is a $\Gamma_{E(3; 3; 1, 1, 1)}$-contraction.
	\end{thm}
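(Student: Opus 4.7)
For the forward direction I plan to verify directly that the block-matrix $7$-tuple $\textbf{V}_{Sc}$ of \eqref{V^i_Sc, V^7_Sc} satisfies the hypotheses of the characterization theorem [Theorem $4.4$, \cite{apal2}] for $\Gamma_{E(3;3;1,1,1)}$-isometries on the Hilbert space $\mathcal{H} \oplus H^2(\mathcal{D}_{T_7})$. First I would check that $V^{(7)}_{Sc}$ is an isometry via the classical Sch\"affer computation, using $T^*_7 T_7 + D^2_{T_7} = I$ and viewing $D_{T_7}h$ as a constant function in $H^2(\mathcal{D}_{T_7})$. Second I would verify the identity $V^{(i)}_{Sc} = V^{(7-i)*}_{Sc}V^{(7)}_{Sc}$ by block expansion: the lower-right entry reduces to the Toeplitz identity $M^*_{F_{7-i}+F^*_i z}M_z = M_{F_i+F^*_{7-i}z}$, the lower-left to $F^*_{7-i}D_{T_7}$ after the Toeplitz adjoint projects onto the constant coefficient, and the upper-left entry reduces precisely to the fundamental equation \eqref{Fundamental 1}. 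Together with the numerical radius hypothesis $w(F_i + F^*_{7-i}z) \leqslant 1$, these identities let me invoke [Theorem $4.4$, \cite{apal2}] to conclude $\textbf{V}_{Sc}$ is a $\Gamma_{E(3;3;1,1,1)}$-isometry.

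The commutativity of the tuple $\textbf{V}_{Sc}$ splits into two parts. The relation $[V^{(i)}_{Sc}, V^{(7)}_{Sc}] = 0$ reduces, after block expansion, to the operator identity $D_{T_7}T_i = F_i D_{T_7} + F^*_{7-i}D_{T_7}T_7$ supplied by Proposition~\ref{FiFj}. The mutual commutativity $[V^{(i)}_{Sc}, V^{(j)}_{Sc}] = 0$ uses that same identity for the off-diagonal block, while the lower-right Toeplitz product $M_{F_i+F^*_{7-i}z}M_{F_j+F^*_{7-j}z}$ commutes under $(i,j) \leftrightarrow (j,i)$ precisely because its symbol $F_iF_j + (F_iF^*_{7-j}+F^*_{7-i}F_j)z + F^*_{7-i}F^*_{7-j}z^2$ matches the swapped version thanks to $[F_i,F_j]=0$ and $[F^*_i,F_{7-j}]=[F^*_j,F_{7-i}]$ from \eqref{Commutative 1}. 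The intertwining $\Pi^{\textbf{T}}_{Sc}T^*_i = V^{(i)*}_{Sc}\Pi^{\textbf{T}}_{Sc}$ is then immediate, because $V^{(i)*}_{Sc}$ has vanishing bottom-left block and so $V^{(i)*}_{Sc}(h,0) = (T^*_i h, 0)$; uniqueness of $\textbf{V}_{Sc}$ is inherited from the uniqueness of the fundamental operators guaranteed by Proposition~\ref{FiFj}.

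For the converse I reuse the Sch\"affer construction with the given $T_i$'s and $F_i$'s. A preliminary step is to derive the identity of Proposition~\ref{FiFj} directly from the two fundamental equations for $T_i$ and $T_{7-i}$ together with $[T_i,T_7]=0$ and $D^2_{T_7} = I - T^*_7 T_7$, invoking injectivity of $D_{T_7}$ on its range $\mathcal{D}_{T_7}$ to obtain $D_{T_7}T_i = F_i D_{T_7} + F^*_{7-i} D_{T_7} T_7$. With this in hand, the same computations as in the forward direction show $\textbf{V}_{Sc}$ is a $\Gamma_{E(3;3;1,1,1)}$-isometry, which by Corollary~\ref{Cor 1} extends to a $\Gamma_{E(3;3;1,1,1)}$-unitary $\textbf{N}$ on a larger Hilbert space. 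Since $\Pi^{\textbf{T}}_{Sc}\mathcal{H}$ is invariant under every $V^{(i)*}_{Sc}$, for any polynomial $p$ in $7$ variables one has $p(\textbf{T}) = P_{\mathcal{H}}p(\textbf{V}_{Sc})|_{\mathcal{H}}$, and the spectral inequality $\|p(\textbf{N})\| \leqslant \|p\|_{\infty, \Gamma_{E(3;3;1,1,1)}}$ for the normal extension yields $\|p(\textbf{T})\| \leqslant \|p\|_{\infty, \Gamma_{E(3;3;1,1,1)}}$, proving $\textbf{T}$ is a $\Gamma_{E(3;3;1,1,1)}$-contraction. The main obstacle I anticipate is the Toeplitz bookkeeping required to verify the mutual commutativity $[V^{(i)}_{Sc}, V^{(j)}_{Sc}] = 0$ on the off-diagonal block, where the nontrivial cancellations draw on the full strength of \eqref{Commutative 1}, Proposition~\ref{FiFj}, and the fact that the adjoint Toeplitz operator $M^*_{F_{7-i}+F^*_i z}$ truncates the $\bar z$-part of its symbol via the Hardy projection $P_+$.
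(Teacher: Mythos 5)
Your plan for the forward direction is a genuinely different route from the paper's: you construct $\textbf{V}_{Sc}$ and verify the $\Gamma_{E(3;3;1,1,1)}$-isometry axioms of [Theorem $4.4$, \cite{apal2}] by direct block computation, whereas the paper first invokes the canonical decomposition [Theorem $6.3$, \cite{apal2}], factors the Sch\"affer embedding through the Sz.-Nagy--Foias minimal dilation via [Theorem $4.1$, \cite{Sarkar}], and then \emph{derives} the block form of the $V_i$'s from the assumption that some $\Gamma_{E(3;3;1,1,1)}$-isometric lift exists. Your block identities are all correct: $V^{(7-i)*}_{Sc}V^{(7)}_{Sc}=V^{(i)}_{Sc}$ does reduce to \eqref{Fundamental 1} in the $(1,1)$ entry, commutation with $V^{(7)}_{Sc}$ reduces to the identity of Proposition \ref{FiFj}, and the mutual commutativity of the lower-left blocks needs exactly $[F^*_{7-i},F_j]=[F^*_{7-j},F_i]$, which is \eqref{Commutative A} after reindexing. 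Your converse (extend to a $\Gamma_{E(3;3;1,1,1)}$-unitary by Corollary \ref{Cor 1} and apply the von Neumann inequality for normal tuples with spectrum in $K$) is also sound and close in spirit to the paper's.

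There are, however, two genuine gaps. First, to invoke [Theorem $4.4$, \cite{apal2}] you must know that the block operators $V^{(i)}_{Sc}$ are contractions, and the hypothesis $w(F_i+F^*_{7-i}z)\leqslant 1$ does not hand you this: it only gives $\|F_i+F^*_{7-i}z\|\leqslant 2$, and even a norm bound on the Toeplitz block says nothing about the full matrix with the off-diagonal entry $F^*_{7-i}D_{T_7}$. The paper closes this by observing that $V^{(i)}_{Sc}=V^{(7-i)*}_{Sc}V^{(7)}_{Sc}$ with $V^{(7)}_{Sc}$ a commuting isometry forces $V^{(i)}_{Sc}$ to be hyponormal, so $\|V^{(i)}_{Sc}\|=r(V^{(i)}_{Sc})\leqslant w(V^{(i)}_{Sc})$ by Stampfli's theorem, and then bounds $w(V^{(i)}_{Sc})$ by $1$ using the numerical radius hypothesis as in [Theorem $4.6$, \cite{apal2}]; some version of this chain is indispensable and is absent from your plan. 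Second, the uniqueness assertion is not "inherited from the uniqueness of the fundamental operators": what must be shown is that \emph{any} $\Gamma_{E(3;3;1,1,1)}$-isometric lift of $\textbf{T}$ on $\mathcal{H}\oplus H^2(\mathcal{D}_{T_7})$ with seventh entry $V^{(7)}_{Sc}$ has lower-triangular blocks, that its $(2,2)$ entry commutes with $M_z$ and is therefore a Toeplitz operator whose symbol the relation $V_i=V^*_{7-i}V_7$ forces to be linear of the form $F_i+F^*_{7-i}z$, and that the resulting $(1,1)$ identity $T_i-T^*_{7-i}T_7=D_{T_7}F_iD_{T_7}$ pins the $F_i$ down as the fundamental operators. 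That structural argument (the analogue of \eqref{M 1.1}--\eqref{Phi_i} in Theorem \ref{Wold decomposition 1}, carried out in \eqref{SM 1.7}--\eqref{SM 1.10}) is the actual content of the uniqueness claim and needs to be supplied.
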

	
	\begin{proof}
		Let $\textbf{T}$ be a $\Gamma_{E(3; 3; 1, 1, 1)}$-contraction on $\mathcal{H}$. Then by [Theorem $6.3$, \cite{apal2}] we have that $\mathcal{H}$ can be decomposed into a direct sum $\mathcal{H} = \mathcal{H}_u \oplus \mathcal{H}_{cnu}$ of two closed proper subspaces $\mathcal{H}_u$ and $\mathcal{H}_{cnu}$ such that $\textbf{T}|_{\mathcal{H}_{cnu}}$ is a c.n.u. $\Gamma_{E(3; 3; 1, 1, 1)}$-contraction and $\mathcal{H}_u$ is a $\Gamma_{E(3; 3; 1, 1, 1)}$-unitary. Since, $\textbf{T}|_{\mathcal{H}_{cnu}}$ is a c.n.u. $\Gamma_{E(3; 3; 1, 1, 1)}$-contraction then $T_7|_{\mathcal{H}_{cnu}}$ is a c.n.u. contraction. By factorization of dilation [Theorem $4.1$, \cite{Sarkar}], there exists an isometry $\Phi : \mathcal{H}^{\textbf{T}}_{NF} \to \begin{pmatrix}
			\mathcal{H}\\
			H^2(\mathcal{D}_{T_7})
		\end{pmatrix}$ such that
		\begin{equation}\label{SM 1.2}
			\begin{aligned}
				\Pi^{\textbf{T}}_{Sc} &= \Phi \Pi^{\textbf{T}}_{NF}.
			\end{aligned}
		\end{equation}
		Since Sch\"{a}ffer dilation is minimal, then it follows that $\Phi$ is a unitary. Let $\textbf{V} = (V_1, \dots, V_6, V_7)$ be the isometric dilation of $\textbf{T}$. Then
		\begin{equation}\label{SM 1.3}
			\begin{aligned}
				&(V_1, \dots, V_6, V_7) =
				\Phi\left(
				\begin{pmatrix}
					M_{F^*_1 + F_6z} & 0\\
					0 & N^{(1)}_{NF}
				\end{pmatrix}, \dots,
				\begin{pmatrix}
					M_{F^*_6 + F_1z} & 0\\
					0 & N^{(6)}_{NF}
				\end{pmatrix},
				\begin{pmatrix}
					M_z & 0\\
					0 & M_{e^{it}}|_{\overline{\triangle_{T_7} L^2(\mathcal{D}_{T_7})}}
				\end{pmatrix}\right)\Phi^*,
			\end{aligned}
		\end{equation}
		where $(N^{(1)}_{NF}, \dots, N^{(6)}_{NF}, M_{e^{it}}|_{\overline{\triangle_{T_7} L^2(\mathcal{D}_{T_7})}})$ is a $\Gamma_{E(3; 3; 1, 1, 1)}$-unitary on $\overline{\triangle_{T_7}L^2(\mathcal{D}_{T_7})}$ and
		\begin{equation}\label{SM 1.4}
			\begin{aligned}
				\Pi^{\textbf{T}}_{Sc}T^*_7
				&= V^*_7\Pi^{\textbf{T}}_S \,\, \text{and} \,\, \Pi^{\textbf{T}}_ST^*_i = V^*_i\Pi^{\textbf{T}}_{Sc} \,\,  \text{for} \,\, 1 \leqslant i \leqslant 6.
			\end{aligned}
		\end{equation}
		Now taking the direct sum of $\textbf{T}|_{\mathcal{H}_u}$ with $\textbf{T}|_{\mathcal{H}_{cnu}}$, we conclude that $\Pi^{\textbf{T}}_{Sc}$ is a minimal isometric dilation of $\textbf{T}$. By uniqueness of Sch\"{a}ffer dilation we have that
		\begin{equation}\label{SM 1.5}
			\begin{aligned}
				V_7 &=
				\begin{pmatrix}
					T_7 & 0\\
					D_{T_7} & M_z
				\end{pmatrix} = V^{(7)}_{Sc}.
			\end{aligned}
		\end{equation}
		We show that
		\begin{equation}\label{SM 1.6}
			\begin{aligned}
				V_i &=
				\begin{pmatrix}
					T_i & 0\\
					F^*_{7-i}D_{T_7} & M_{F_i + F^*_{7-i}z}
				\end{pmatrix} = V^{(i)}_{Sc}
			\end{aligned}
		\end{equation}
		for some $F_1, \dots, F_6 \in \mathcal{B}(\mathcal{D}_{T_7})$ satisfying $w(F_i + F^*_{7-i}z) \leqslant 1$ for $1 \leqslant i \leqslant 6$ and $z \in \mathbb{T}$. Let us assume
		\begin{equation*}
			\begin{aligned}
				V_i &=
				\begin{pmatrix}
					T_i & 0\\
					A^{(i)}_{21} & A^{(i)}_{22}
				\end{pmatrix}
				\,\, \text{for} \,\, 1 \leqslant i \leqslant 6.
			\end{aligned}
		\end{equation*}
		Since $\textbf{V}$ is a $\Gamma_{E(3; 3; 1, 1, 1)}$-isometry then by [Theorem $4.4$, \cite{apal2}] we have $V_i = V^*_{7-i}V_7$ for $1 \leqslant i \leqslant 6$, i.e.,
		\begin{equation}\label{SM 1.7}
			\begin{aligned}
				\begin{pmatrix}
					T_i & 0\\
					A^{(i)}_{21} & A^{(i)}_{22}
				\end{pmatrix} &=
				\begin{pmatrix}
					T^*_{7-i} & A^{(7-i)*}_{21}\\
					0 & A^{(i)*}_{22}
				\end{pmatrix}
				\begin{pmatrix}
					T_7 & 0\\
					D_{T_7} & M_z
				\end{pmatrix}\\
				&=
				\begin{pmatrix}
					T^*_{7-i}T_7 + A^{(7-i)*}_{21}D_{T_7} & A^{(7-i)*}_{21}M_z\\
					A^{(7-i)*}_{22}D_{T_7} & A^{(7-i)*}_{22}M_z
				\end{pmatrix}.
			\end{aligned}
		\end{equation}
		From \eqref{SM 1.7} we have
		\begin{equation}\label{SM 1.8}
			\begin{aligned}
				T_i = T^*_{7-i}T_7 + A^{(7-i)*}_{21}D_{T_7}, A^{(7-i)*}_{21}M_z = 0, A^{(i)}_{21} = A^{(7-i)*}_{22}D_{T_7} \,\, \text{and} \,\,
				A^{(i)}_{22} = A^{(7-i)*}_{22}M_z.
			\end{aligned}
		\end{equation}
		Also by the commutativity of $V_i$ with $V_7$ we get $A^{(i)}_{22}M_z = M_zA^{(i)}_{22}$. This implies that there exists $\Psi_i \in \mathcal{B}(\mathcal{D}_{T_7})$ such that $A^{(i)}_{22} = M_{\Psi_i}$ for $1 \leqslant i \leqslant 6$. Then proceeding similarly as in Theorem \ref{Wold decomposition 1} we see that $\Psi_i$'s are of the form $\Psi_i(z) = F_i + F^*_{7-i}z$ for some $F_i \in \mathcal{B}(\mathcal{D}_{T_7})$ for $1 \leqslant i \leqslant 6$ and $z \in \mathbb{T}$. Again from \eqref{SM 1.8} we get
		\begin{equation}\label{SM 1.9}
			\begin{aligned}
				A^{(i)}_{21}
				&= A^{(7-i)*}_{22}D_{T_7}
				= M^*_{\Psi_{7-i}}D_{T_7}
				= M^*_{F_{7-i} + F^*_iz}D_{T_7}
				= (F^*_{7-i} + F_iM^*_z)D_{T_7}
				= F^*_{7-i}D_{T_7}.
			\end{aligned}
		\end{equation}
		Thus from $T_i = T^*_{7-i}T_7 + A^{(7-i)*}_{21}D_{T_7}$ in \eqref{SM 1.8} and \eqref{SM 1.9} we obtain the following:
		\begin{equation}\label{SM 1.10}
			\begin{aligned}
				T_i - T^*_{7-i}T_7 &= D_{T_7}F_iD_{T_7} \,\, \text{for} \,\, 1 \leqslant i \leqslant 6.
			\end{aligned}
		\end{equation}
		Hence $F_1, \dots, F_6$ are the fundamental operators of $\textbf{T}$ and hence $F_i$'s are unique proving that $V_i$'s are of the form that in \eqref{SM 1.6} and that are uniquely determined. And since $F_1, \dots, F_6$ are the fundamental operators of $\textbf{T}$, we have $w(F_i + F^*_{7-i}z) \leqslant 1$ for $1 \leqslant i \leqslant 6$. And hence $\Pi^{\textbf{T}}_{Sc}T^*_i =
		V^{(i)*}_{Sc}\Pi^{\textbf{T}}_{Sc}$.
		
		Conversely, let $\textbf{T} = (T_1, \dots, T_7)$ be a commuting $7$-tuple of bounded operators on $\mathcal{H}$ such that $||T_i|| \leqslant 1$ and $T_i - T^*_{7-i}T_7 = D_{T_7}F_iD_{T_7}$ for some $F_i \in \mathcal{B}(\mathcal{D}_{T_7})$ with $w(F_i + F^*_{7-i}z) \leqslant 1$ for $1 \leqslant i \leqslant 6$ and $z \in \mathbb{T}$. Assume that the Sch\"{a}ffer dilation $\Pi^{\textbf{T}}_S$ satisfies
		\begin{equation*}
			\begin{aligned}
				\Pi^{\textbf{T}}_{Sc}T^*_7
				&= V^{(7)*}_{Sc}\Pi^{\textbf{T}}_{Sc} \,\, \text{and} \,\, \Pi^{\textbf{T}}_{Sc}T^*_i = V^{(i)*}_{Sc}\Pi^{\textbf{T}}_{Sc} \,\,  \text{for} \,\, 1 \leqslant i \leqslant 6,
			\end{aligned}
		\end{equation*}
		where $V^{(i)}_{Sc}$ is the operator
		$\begin{pmatrix}
			T_i & 0\\
			F^*_{7-i}D_{T_7} & M_{F_i + F^*_{7-i}z}
		\end{pmatrix}$ for $1 \leqslant i \leqslant 6$. Then one can easily check that $V^{(i)}_{Sc} = V^{(7-i)*}_{Sc}V^{(7)}_{Sc}$ for $1 \leqslant i \leqslant 6$. Since $V^{(i)}_{Sc}$ commutes with $V^{(7)}_{Sc}$ and $V^{(i)}_{Sc} = V^{(7-i)*}_{Sc}V^{(7)}_{Sc}$ then $V^{(i)}_{Sc}$ are hyponormal operators and then by [Theorem $1$, \cite{Stampfli}] we have that $||V^{(i)}_{Sc}|| = r(V^{(i)}_{Sc})$ for $1 \leqslant i \leqslant 6$. We show that $r(V^{(i)}_{Sc}) \leqslant 1$ Now by [Lemma $1$, \cite{Hong}], $\sigma(V^{(i)}_{Sc}) \subseteq \sigma(T_i) \cup \sigma(M_{F_i + F^*_{7-i}z})$. Since $||T_i|| \leqslant 1$ then $r(T_i) \leqslant 1$. Since $r(V^{(i)}_{Sc}) \leqslant w(V^{(i)}_{Sc})$ then we just show $w(V^{(i)}_{Sc}) \leqslant 1$. By the similar method in [Theorem $4.6$, \cite{apal2}] we obtain $w(V^{(i)}_{Sc}) \leqslant 1$. It now follows from here that $\textbf{V}_{Sc}$ is a $\Gamma_{E(3; 3; 1, 1, 1)}$-isometry; and as $\textbf{T}$ is the restriction of $\textbf{V}_{Sc}$ on $\mathcal{H}$ hence $\textbf{T}$ is a $\Gamma_{E(3; 3; 1, 1, 1)}$-contraction. This finishes the proof.
	\end{proof}
	
	We next produce an analogous dilation for $\Gamma_{E(3; 2; 1, 2)}$-contraction. Let us consider the operators $W^{(1)}_{Sc}, W^{(2)}_{Sc},\\ W^{(3)}_{Sc}, \tilde{W}^{(1)}_{Sc}, \tilde{W}^{(2)}_{Sc}$ as follows
	\begin{equation}
		\begin{aligned}
			&W^{(1)}_{Sc} =
			\begin{pmatrix}
				S_1 & 0\\
				\tilde{G}^*_2D_{S_3} & M_{G_1 + \tilde{G}^*_2z}
			\end{pmatrix},
			W^{(2)}_{Sc} =
			\begin{pmatrix}
				S_2 & 0\\
				2\tilde{G}^*_1D_{S_3} & M_{2G_2 + 2\tilde{G}^*_1z}
			\end{pmatrix},
			W^{(3)}_{Sc} =
			\begin{pmatrix}
				S_3 & 0\\
				D_{S_3} & M_z
			\end{pmatrix},\\
			&\hspace{2cm}
			\tilde{W}^{(1)}_{Sc} =
			\begin{pmatrix}
				\tilde{S}_1 & 0\\
				2G^*_2D_{S_3} & M_{2\tilde{G}_1 + 2G^*_2z}
			\end{pmatrix},
			\tilde{W}^{(2)}_{Sc} =
			\begin{pmatrix}
				\tilde{S}_2 & 0\\
				G^*_1D_{S_3} & M_{\tilde{G}_2 + G^*_1z}
			\end{pmatrix}.
		\end{aligned}
	\end{equation}
	We prove that $\textbf{W}_{Sc} = (W^{(1)}_{Sc}, W^{(2)}_{Sc}, W^{(3)}_{Sc}, \tilde{W}^{(1)}_{Sc}, \tilde{W}^{(2)}_{Sc})$ is an $\Gamma_{E(3; 2; 1, 2)}$-isometric dilation of $\textbf{S}$ acting on the model space
	$
	\begin{pmatrix}
		\mathcal{H}\\
		H^2(\mathcal{D}_{S_3})
	\end{pmatrix}$ with respect to the isometry
	\begin{equation}\label{Pi^S_Sc}
	\begin{aligned}
		\Pi^{\textbf{S}}_{Sc} : \mathcal{H} \to 
		\begin{pmatrix}
			\mathcal{H}\\
			H^2(\mathcal{D}_{S_3})
		\end{pmatrix} \,\, \text{defined by} \,\,
		\Pi^{\textbf{S}}_{Sc} h = (h, 0) \,\, \text{for all} \,\, h \in \mathcal{H}.
	\end{aligned}
	\end{equation}
	
	We skip the proof of the following theorem as it is identical with the proof of Theorem \ref{Schaffer Model 1}.
	
	\begin{thm}[Sch\"{a}ffer Isometric Dilation for $\Gamma_{E(3; 2; 1, 2)}$-Contraction]\label{Schaffer Model 2}
		Let $\textbf{S} = (S_1, S_2, S_3, \tilde{S}_1, \tilde{S}_2)$ be a $\Gamma_{E(3; 2; 1, 2)}$-contraction on a Hilbert space $\mathcal{H}$. Then
		\begin{equation}\label{Schaffer Model 2.1}
			\begin{aligned}
				\textbf{W}_{Sc} = (W^{(1)}_{Sc}, W^{(2)}_{Sc}, W^{(3)}_{Sc}, \tilde{W}^{(1)}_{Sc}, \tilde{W}^{(2)}_{Sc})
			\end{aligned}
		\end{equation}
		is the Sch\"{a}ffer isometric dilation of $\textbf{S}$ on the model space
		$
		\begin{pmatrix}
			\mathcal{H}\\
			H^2(\mathcal{D}_{S_3})
		\end{pmatrix}$
		satisfies
		\begin{equation}
			\begin{aligned}
				\Pi^{\textbf{S}}_{Sc}S^*_i &=
				W^{(i)*}_{Sc}\Pi^{\textbf{S}}_{Sc} \,\, \text{and} \,\,
				\Pi^{\textbf{S}}_{Sc}\tilde{S}^*_j &=
				\tilde{W}^{(j)*}_{Sc}\Pi^{\textbf{S}}_{Sc}
			\end{aligned}
		\end{equation}
		for $1 \leqslant i, j \leqslant 2$ and that is uniquely determined by the operators $G_1, 2G_2, 2\tilde{G}_1, \tilde{G}_2 \in  \mathcal{B}(\mathcal{D}_{S_3})$ satisfying \eqref{funda1}, \eqref{funda11}, \eqref{Commutative 2(i)}, \eqref{Commutative 2(ii)} and $w(G_1 + \tilde{G}^*_2z) \leqslant 1$ and $w(G_2 + \tilde{G}^*_1z) \leqslant 1$ for all $z \in \mathbb{T}$.
		
		Conversely, let $\textbf{S} = (S_1, S_2, S_3, \tilde{S}_1, \tilde{S}_2)$ be commuting $5$-tuple of bounded operators, acting on a Hilbert space $\mathcal{H}$ such that $||S_1|| \leqslant 1, ||\tilde{S}_2||, ||S_2|| \leqslant 2$, and $||\tilde{S}_1|| \leqslant 2$ and that satisfy \eqref{funda1}, \eqref{funda11}, \eqref{Commutative 2(i)}, \eqref{Commutative 2(ii)} for some $G_1, 2G_2, 2\tilde{G}_1, \tilde{G}_2 \in \mathcal{B}(\mathcal{D}_{S_3})$ with $w(G_1 + \tilde{G}^*_2z) \leqslant 1$ and $w(G_2 + \tilde{G}^*_1z) \leqslant 1$ for all $z \in \mathbb{T}$. Then $\textbf{S}$ is a $\Gamma_{E(3; 2; 1, 2)}$-contraction.
	\end{thm}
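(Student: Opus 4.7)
The plan is to mimic the two-direction argument of Theorem \ref{Schaffer Model 1} and leverage the parallel toolkit developed in the excerpt for $\Gamma_{E(3;2;1,2)}$-contractions. For the forward direction, I would first invoke the canonical decomposition [Theorem $6.3$, \cite{apal2}] to split $\mathcal H = \mathcal H_u \oplus \mathcal H_{cnu}$ so that $\mathbf S|_{\mathcal H_u}$ is a $\Gamma_{E(3;2;1,2)}$-unitary and $\mathbf S|_{\mathcal H_{cnu}}$ is c.n.u. For the c.n.u. summand, $S_3|_{\mathcal H_{cnu}}$ is a c.n.u. contraction, so Sz.-Nagy--Foias provides a minimal isometric dilation of $S_3$, and Theorem \ref{Nagy-Foias Model 2} gives the full Nagy--Foias model for $\mathbf S$ whose dilating $5$-tuple has multipliers $M_{\hat G_1^* + \hat{\tilde G}_2 z}$, $M_{2\hat G_2^* + 2\hat{\tilde G}_1 z}$, $M_z$, $M_{2\hat{\tilde G}_1^* + 2\hat G_2 z}$, $M_{\hat{\tilde G}_2^* + \hat G_1 z}$ together with a $\Gamma_{E(3;2;1,2)}$-unitary part. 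Applying the factorization-of-dilation result [Theorem $4.1$, \cite{Sarkar}] produces an isometry $\Phi : \mathcal H^{\mathbf S}_{NF} \to \begin{pmatrix}\mathcal H\\ H^2(\mathcal D_{S_3})\end{pmatrix}$ with $\Pi^{\mathbf S}_{Sc} = \Phi\,\Pi^{\mathbf S}_{NF}$; minimality of the Schäffer construction will force $\Phi$ to be unitary after absorbing the unitary summand corresponding to $\mathcal H_u$.

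Having identified the Schäffer space, I would then pin down the $2\times 2$ block form of the dilating operators. Writing each candidate dilation $W_i$ as a lower-triangular block matrix (upper-right block vanishes because $\mathcal H$ is co-invariant in the Schäffer dilation, and $W^{(3)}_{Sc}$ is already forced by the classical Schäffer construction for $S_3$), I would impose the two key families of identities produced by the $\Gamma_{E(3;2;1,2)}$-isometry characterization [Theorem $4.5$, \cite{apal2}]: $W^{(1)}_{Sc} = \tilde W^{(2)*}_{Sc} W^{(3)}_{Sc}$ and $W^{(2)}_{Sc} = \tilde W^{(1)*}_{Sc} W^{(3)}_{Sc}$, together with the commutation with $W^{(3)}_{Sc}=\begin{pmatrix}S_3 & 0\\ D_{S_3} & M_z\end{pmatrix}$. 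Commutativity with $M_z$ on the $H^2$-block identifies the diagonal entries as analytic Toeplitz operators, and the relation with $M_z$ via the isometric lift identities (as in the power-series matching step \eqref{M 1.3}--\eqref{M 1.4}) forces the symbols to have the form $G_1 + \tilde G_2^* z$ and $2G_2 + 2\tilde G_1^* z$ (and their tilde counterparts). Equating the $(2,1)$-block with the result of $\tilde G_2^* D_{S_3}$ (respectively $2\tilde G_1^* D_{S_3}$, $2 G_2^* D_{S_3}$, $G_1^* D_{S_3}$) then recovers exactly the fundamental equations \eqref{funda1}--\eqref{funda11}, so the operators $G_1, 2G_2, 2\tilde G_1, \tilde G_2$ coincide with the fundamental operators of $\mathbf S$ and are hence unique by Proposition \ref{s1s3}. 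The numerical-radius bounds $w(G_1 + \tilde G_2^* z) \le 1$ and $w(G_2 + \tilde G_1^* z) \le 1$ are already part of the fundamental-operator theory.

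For the converse direction, the plan is to verify directly that $\mathbf W_{Sc}$ is a $\Gamma_{E(3;2;1,2)}$-isometry, which immediately implies that $\mathbf S$ (the compression to $\mathcal H$) is a $\Gamma_{E(3;2;1,2)}$-contraction since $\Pi^{\mathbf S}_{Sc}$ is an isometric embedding. A direct block-matrix computation using the fundamental equations \eqref{funda1}--\eqref{funda11} yields the key identities $W^{(1)}_{Sc} = \tilde W^{(2)*}_{Sc} W^{(3)}_{Sc}$ and $W^{(2)}_{Sc} = \tilde W^{(1)*}_{Sc} W^{(3)}_{Sc}$, while the commutation hypotheses \eqref{Commutative 2(i)}--\eqref{Commutative 2(ii)} guarantee that $\mathbf W_{Sc}$ is a commuting tuple (the $H^2$-block commutations reduce to symbol-level relations for $G_1, 2G_2, 2\tilde G_1, \tilde G_2$, while cross-terms collapse using $D_{S_3}S_i = $ explicit expressions from Proposition \ref{s1s3}). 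Once commutativity and the product relations are in hand, the tuple consists of hyponormal operators, so by [Theorem $1$, \cite{Stampfli}] their norms equal their spectral radii, which are bounded by the spectral radii of their diagonal entries via [Lemma $1$, \cite{Hong}]; these are controlled by the hypotheses $\|S_1\|\le 1$, $\|S_2\|\le 2$, $\|\tilde S_1\|\le 2$, $\|\tilde S_2\|\le 1$ and the numerical-radius bounds on the symbols. Applying [Theorem $4.5$, \cite{apal2}] then concludes that $\mathbf W_{Sc}$ is a $\Gamma_{E(3;2;1,2)}$-isometry.

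The main obstacle will be the bookkeeping for the mixed pairings $S_1 \leftrightarrow \tilde S_2$ and $\tfrac{S_2}{2} \leftrightarrow \tfrac{\tilde S_1}{2}$: unlike the clean symmetry $T_i \leftrightarrow T_{7-i}$ of the $\Gamma_{E(3;3;1,1,1)}$ case, the $\Gamma_{E(3;2;1,2)}$ setting forces one to handle four distinct fundamental operators with two different scalings, and the commutativity of $\mathbf W_{Sc}$ demands the full list of commutator identities \eqref{Commutative 2(i)}--\eqref{Commutative 2(ii)} rather than the shorter list \eqref{Commutative 1}. The uniqueness step, which relies on the power-series matching that forces the symbol to be affine in $z$, is technically the same as in Theorem \ref{Schaffer Model 1}, but it must be carried out separately for each of the four non-shift entries.
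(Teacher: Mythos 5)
Your proposal is correct and follows essentially the same route as the paper: the paper explicitly omits this proof, stating it is identical to that of Theorem \ref{Schaffer Model 1}, and your argument is precisely that proof transported to the $\Gamma_{E(3;2;1,2)}$ setting (canonical decomposition, factorization through the Nagy--Foias dilation, minimality forcing the unitary identification, block-matrix analysis via the isometry relations $W^{(1)}_{Sc}=\tilde W^{(2)*}_{Sc}W^{(3)}_{Sc}$ and $W^{(2)}_{Sc}=\tilde W^{(1)*}_{Sc}W^{(3)}_{Sc}$, and the hyponormality/spectral-radius argument for the converse). Your closing remarks on the mixed pairings and the four distinct fundamental operators correctly identify the only genuine bookkeeping differences from the $\Gamma_{E(3;3;1,1,1)}$ case.
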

	
	We end this section by a concluding remark on Theorem \ref{Schaffer Model 1} and Theorem \ref{Schaffer Model 2}.
	
	\begin{rem}\label{Rem 3}
		An important observation is that Theorem \ref{Schaffer Model 1} is nothing but the conditional isometric dilation of $\Gamma_{E(3; 3; 1, 1, 1)}$ described in [Theorem $4.6$, \cite{apal2}]. Here we have reformulated the conditional dilation in a model theoretic point of view. On the other hand, Theorem \ref{Schaffer Model 2} is also a model theoretic reformulation of [Theorem $4.7$, \cite{apal2}] where the conditional isometric dilation of $\Gamma_{E(3; 2; 1, 2)}$-contraction is developed.
	\end{rem}

	\vspace{.5cm}
	
	\noindent (A. Pal) \sc{Department of Mathematics, IIT Bhilai, 6th Lane Road, Jevra, Chhattisgarh 491002}\\
	{E-mail address:} {A. Pal:avijit@iitbhilai.ac.in}

	\noindent (B. Paul) \sc{Department of Mathematics, IIT Bhilai, 6th Lane Road, Jevra, Chhattisgarh 491002}\\
	{E-mail address:} {B. Paul:bhaskarpaul@iitbhilai.ac.in}  
	
\end{document}